\newcommand\sB{{\mathcal B}}
\newcommand\sN{{\mathcal N}}
\newcommand\Ga{\Gamma}
\newcommand\ga{\gamma}
\newcommand\fie{\varphi}
\newcommand{\CC}{\ensuremath{\mathbb{C}}}
\newcommand{\ZZ}{\ensuremath{\mathbb{Z}}}
\newcommand{\QQ}{\ensuremath{\mathbb{Q}}}
\newcommand{\NN}{\ensuremath{\mathbb{N}}}
\newcommand{\hol}{\ensuremath{\mathcal{O}}}
\newcommand{\HH}{\ensuremath{\mathbb{H}}}
\newcommand{\PP}{\ensuremath{\mathbb{P}}}
\newcommand{\ra}{\ensuremath{\rightarrow}}
\def\eea{\end{eqnarray*}}
\def\bea{\begin{eqnarray*}}
\newcommand\dual{\mathrel{\raise3pt\hbox{$\underline{\mathrm{\thinspace d
\thinspace}}$}}}
\newcommand\qe{\ifhmode\unskip\nobreak\fi\quad $\Box$}       % box for QED
\def\BOX{\hfill\lower.5\baselineskip\hbox{$\Box$}}
\newtheorem{theo}{Theorem}[section]
\newtheorem{remarkk}[theo]{Remark}
\newenvironment{rem}{\begin{remarkk}\rm}{\end{remarkk}}
\newtheorem{defin}[theo]{Definition}
\newenvironment{definition}{\begin{defin}\rm}{\end{defin}}
\newtheorem{prop}[theo] {Proposition}
\newtheorem{question}[theo] {Question}
\newtheorem{cor}[theo]{Corollary}
\newtheorem{lemma}[theo]{Lemma}
\newtheorem{example}[theo]{Example}
\newtheorem{conj}[theo]{Conjecture}
\newtheorem{problem}[theo]{Problem}
\newtheorem{rema}{Remark}[section]
\newcommand{\BT}{\ensuremath{\mathbb{T}}}
\DeclareMathOperator{\Alb}{Alb}
\DeclareMathOperator{\Aut}{Aut}
\DeclareMathOperator{\Sing}{Sing}
\DeclareMathOperator{\Tors}{Tors}
\DeclareMathOperator{\Sym}{Sym}
\newcommand{\Proof}{{\it Proof. }}
\begin{document}

\title[Quotients of products of curves]
{Quotients of  products of curves, new surfaces with $p_g = 0$ and their
fundamental groups.}
\author{I. Bauer, F. Catanese, F. Grunewald , R. Pignatelli}

\thanks{After the few initials examples, constructed 'by hands' by 
the first two named
authors, were presented at a workshop in Pisa in may 2006, the present
work became a central project  of the DFG Forschergruppe 790
"Classification of algebraic surfaces and compact complex manifolds":
  in particular  the
visit of the fourth author to Bayreuth was supported by the DFG.
Complete results were
presented in february 2008 at Warwick University, then at Tokyo University,
and at the Centro De Giorgi, Scuola Normale di Pisa; we thank these
institutions  for their hospitality.}

\date{\today}

\begin{abstract}
We construct many new surfaces of general type with $q=p_g = 0$
whose canonical model is the quotient of the product of two
curves by the action of a finite group $G$,
  constructing  in this way many new interesting  fundamental groups
  which distinguish connected components of the
moduli space of surfaces of general type.

We indeed classify all such surfaces
whose canonical model is singular (the smooth case was classified  in 
an earlier work).

As an important tool we prove a structure theorem
giving a precise description of the fundamental group of
  quotients of products of curves by the action of a finite group $G$.

\end{abstract}

\maketitle

\tableofcontents

%%%%%%%%%%%%%%%%%%%%%%%%%%%%%%%%%%%%%%%%%%%%%%%%%%%%%%
\section*{Introduction}
%%%%%%%%%%%%%%%%%%%%%%%%%%%%%%%%%%%%%%%%%%%%%%%%%%%%%%

The first main purpose of this paper is to contribute to the existing
knowledge about the complex projective surfaces $S$ of general type with
$p_g(S) = 0$ and their moduli spaces, constructing 19 new families
of such surfaces with hitherto unknown fundamental groups.

Minimal surfaces of general type with $p_g(S) = 0$ are known to have invariants
$p_g(S)  =  q (S)= 0, 1 \leq K_S^2 \leq 9$, and to yield a finite
number of irreducible components of the moduli space of surfaces of 
general type.

They represent for algebraic geometers a very difficult test case about
the possibility of extending the Enriques  classification of special surfaces
to surfaces of general type.

They are also very interesting in view of the Bloch conjecture (\cite{bloch}),
predicting that for surfaces with $p_g(S)  =  q (S)= 0$
the group of zero cycles modulo rational equivalence
is  isomorphic to $\ZZ$.

In this paper, using the
beautiful results
of Kimura (\cite{kimura}, see also \cite{gp}), the present results,
and those of the previous paper \cite{bcg}, we  produce
more than 40 families  of  surfaces for which
Bloch's conjecture holds.

Surfaces with $p_g(S)  =  q (S)= 0$ have a very old history, dating back to
1896 (\cite{enr96}, see also \cite{enrMS},
I, page 294) when Enriques constructed the
so called Enriques surfaces in order to give a counterexample to the conjecture
of  Max Noether that any such surface should be rational.

The first surfaces of general type with $p_g = q =
0$ were constructed in the 1930' s by   Luigi Campedelli and Lucien
Godeaux (cf. \cite{Cam},
\cite{god}): in their honour  minimal surfaces  of general type with
$K_S^2 = 1$  are called
numerical Godeaux surfaces, and those with
$K_S^2 = 2$ are called numerical  Campedelli surfaces.

In the 1970's there was  a big revival of interest in the construction of these
surfaces and in a possible classification.

After rediscoveries of these and other old examples a few new
ones were found through
the efforts of several authors, in particular Rebecca Barlow 
(\cite{barlow}) found
a simply connected numerical Godeaux surface, which played a decisive role
in the study of the differential topology of algebraic surfaces and 4-manifolds
(and also in the discovery of K\"ahler Einstein metrics of opposite sign
on the same manifold, see \cite{clb}).

   A (relatively short) list of the existing  examples appeared
   in the book \cite{bpv}, (see   \cite{bpv}, Vii, $10$ and
references therein,
and see also
\cite{bhpv} for an updated longer list).

There has been  recently  important progress
on the topic,
and the current situation is as follows:

\begin{itemize}
\item
$K_S^2 = 9$:  these surfaces have the unit  ball in $\CC^2$ as universal cover,
and their fundamental group is an arithmetic subgroup  $\Ga$ of $ SU (2,1)$.

This case seems to be completely classified through exciting
new work of Prasad and Yeung and of Steger and Cartright
(\cite{p-y}, \cite{p-yadd}) asserting that  the moduli space  consists
exactly of 50 pairs of complex conjugate surfaces.
\item
    $K_S^2 = 8$: we  pose the  question  whether  in this case the universal
cover must
be the bidisk in $\CC^2$.

Assuming this, a complete classification should be possible.

The classification has already been accomplished  in
\cite{bcg}
for the reducible case where there is a finite \'etale cover which is
isomorphic
to a product of curves: in this case there are exactly 17
irreducible connected components
of the moduli space.

There are many examples, due to Kuga and Shavel (\cite{kug}, \cite{shav})
for the irreducible case, which yield (as in the case $K_S^2 = 9$)
rigid surfaces;
but a complete classification of this second case is still missing.
\end{itemize}

The constructions of minimal surfaces of general type
with $p_g=0$ and with $K^2_S \leq 7$ avalaible in the literature (to the
best of the authors' knowledge, and excluding the results of this 
paper)  are listed in table \ref{tabknown}.

We proceed to a description, with the aim of putting the results of
the present paper in proper perspective.

\begin{table}
\caption{Minimal surfaces of general type with
   $p_g=0$ and $K^2 \leq 7$ avalaible in the literature}
\label{tabknown}
\begin{tabular}[ht]{|c|c|c|c|l|}
\hline
$K^2$  & $\pi_1$& $\pi_1^{alg}$ & $H_1$& References \\
\hline
\hline
1  &$\ZZ_5$&$\ZZ_5$ & $\ZZ_5$&\cite{godold}\cite{tokyo}\cite{miyaokagod}\\
   &$\ZZ_4$&$\ZZ_4$ & 
$\ZZ_4$&\cite{tokyo}\cite{op}\cite{barlow2}\cite{naie94}\\
   &?& $\ZZ_3$ &  $\ZZ_3$&\cite{tokyo}\\
   &$\ZZ_2$&$\ZZ_2$ & $\ZZ_2$&\cite{barlow2}\cite{inoue}\\
   &?&$\ZZ_2$ & $\ZZ_2$&\cite{werner}\cite{wer97}\\
   &$\{1\}$& $\{1\}$&  $\{0\}$ &\cite{barlow}\cite{lp} \\
   & ? & $\{0\}$ &  $\{0\}$& \cite{cg}\cite{dw}  \\
\hline
\hline
2  &$\ZZ_9$&$\ZZ_9$ & $\ZZ_9$& \cite{mlp} \\
    &$\ZZ_3^2$&$\ZZ_3^2$&$\ZZ_3^2$ &\cite{xiao}\cite{mlp} \\
 
&$\ZZ_2^3$&$\ZZ_2^3$&$\ZZ_2^3$&\cite{Cam}\cite{MilesCamp}\cite{peterscamp}\cite{inoue}\\
    &&&&\cite{naie94}\\
    &$\ZZ_2 \times \ZZ_4$&$\ZZ_2 \times \ZZ_4$&$\ZZ_2 \times \ZZ_4$  & 
\cite{MilesCamp}\cite{naie94}\cite{keum} \\
    &$\ZZ_8$&$\ZZ_8$& $\ZZ_8$ & \cite{MilesCamp} \\
    &$Q_8$&$Q_8$ & $\ZZ_2^2$ &\cite{MilesCamp} \cite{beauville96}\\
    &$\ZZ_7$&$\ZZ_7$& $\ZZ_7$ & \cite{cvg} \\
    &?&$\ZZ_6$& $\ZZ_6$ & \cite{np} \\
    &$\ZZ_5$&$\ZZ_5$ & $\ZZ_5$ & \cite{Babbage}\cite{sup} \\
&$\ZZ_2^2$&$\ZZ_2^2$ & $\ZZ_2^2$ & \cite{inoue}\cite{keum} \\
   &?&$\ZZ_3$ & $\ZZ_3$ & \cite{lp2} \\
   &?&$\ZZ_2$ & $\ZZ_2$ & \cite{lp2} \\
    &$\{1\}$& $\{1\}$ & $\{0\}$ & \cite{lp} \\
\hline
\hline
3  &$\ZZ_2^2 \times \ZZ_4$&$\ZZ_2^2 \times \ZZ_4$ & $\ZZ_2^2 \times 
\ZZ_4$& \cite{naie94} \cite{keum} \cite{mlp3} \\
    &$Q_8 \times \ZZ_2$&$Q_8 \times \ZZ_2$ & $\ZZ_2^3$ & 
\cite{burniat}\cite{peters} \cite{inoue}\\
    &? &? & $\ZZ_2$ & \cite{pps3b} \\
    &$\{1\}$&$\{1\}$ & $\{0\}$ &\cite{pps3} \\

\hline
\hline
4  &$1 \rightarrow \ZZ^4 \rightarrow \pi_1 \rightarrow
\ZZ_2^2\rightarrow 1$&$\hat{\pi}_1$& $\ZZ_2^3 \times \ZZ_4$ & 
\cite{naie94}\cite{keum}\\
    &$Q_8 \times \ZZ_2^2$ &$Q_8 \times \ZZ_2^2$ & $\ZZ_2^4$ 
&\cite{burniat}\cite{peters}\cite{inoue}\\
    &$\{1\}$&$\{1\}$ & $\{0\}$ & \cite{pps4} \\
\hline
\hline
5   &$Q_8 \times \ZZ_2^3$&$Q_8 \times \ZZ_2^3$ & $\ZZ_2^5$ & 
\cite{burniat}\cite{peters}\cite{inoue}\\
    &?&?&?& \cite{inoue}\\
\hline
\hline
6   &$1 \rightarrow \ZZ^6 \rightarrow \pi_1 \rightarrow
\ZZ_2^3\rightarrow 1$&$\hat{\pi}_1$& $\ZZ_2^6$ & 
\cite{burniat}\cite{peters}\cite{inoue}\\
    &?&?&?& \cite{inoue}\cite{mlp4b}\\
\hline
\hline
7   &$1 \rightarrow \Pi_3 \times \ZZ^4 \rightarrow \pi_1 \rightarrow
\ZZ_2^3\rightarrow 1$&$\hat{\pi}_1$&?& \cite{inoue}\cite{mlp01} 
\cite{toappear}\\
\hline
\end{tabular}
\end{table}

\begin{itemize}
\item
$K_S^2 = 1$, i.e., numerical Godeaux surfaces: it is conjectured by Miles Reid
that the moduli space should have exactly five irreducible connected
components,
distinguished by the fundamental group, which should be  a cyclic group
$\ZZ_m$ of order
$ 1 \leq m \leq 5$ (\cite{tokyo} settled the case where the order $m$ of
the first homology
  group is at least 3; \cite{barlow}, \cite{barlow2} and
\cite{werner}
   were the first to show the occurrence of the two other groups).

\item
$K_S^2 = 2$, i.e., numerical Campedelli surfaces: here, it is  known
that the order of the algebraic fundamental group is at most $9$,
and the cases of order $8,9$ have been classified by Mendes Lopes,
    Pardini and Reid (\cite{mlp}, \cite{mlpr}, \cite{MilesCamp}), who
    show in particular that the fundamental group equals the algebraic
    fundamental group and cannot be a dihedral group $D_4$ of order $8$.
Naie (\cite{naie}) showed that the group $D_3$ of order $6$
cannot occur as the fundamental group of a numerical Campedelli surface.
By the work of  Lee and Park (\cite{lp}), one knows that there exist
simply connected numerical Campedelli surfaces.

Our first result here
is the construction of two families of
    numerical Campedelli surfaces with fundamental
group
$\ZZ_3$. Recently Neves and Papadakis  (\cite{np}) constructed a
numerical Campedelli surface with algebraic fundamental group $\ZZ_3$,
while Lee and Park (\cite{lp2}) constructed one with algebraic
fundamental group $\ZZ_2$, and one with algebraic
fundamental group $\ZZ_3$ was added in the second version of the same paper.

Open conjectures are:

\begin{conj}
Is the fundamental group $\pi_1(S)$ of a numerical Campedelli
surface finite?
\end{conj}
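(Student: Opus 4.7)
Since this is posed as an open conjecture rather than a theorem with a known proof, what follows is a research strategy, not a reconstruction.

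The plan is to assume $\pi_1(S)$ is infinite and seek a contradiction with the invariants $K_S^2 = 2$, $p_g = q = 0$. The first step is to exploit the tower of finite \'etale covers: for a cover $S' \to S$ of degree $d$ one gets $K_{S'}^2 = 2d$ and $\chi(\Oh_{S'}) = d$, so $p_g(S') - q(S') = d - 1$. Combining Bogomolov--Miyaoka--Yau ($K^2 \leq 9 \chi$) with Noether's inequality and a careful analysis of the Albanese map of $S'$, one would try to show that as $d$ grows the Albanese dimension must eventually become positive, producing in the limit a fibration-like structure on the universal cover $\widetilde S$ and forcing a positive first $L^2$-Betti number incompatible with $q(S) = 0$.

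The second ingredient is the bound $|\pi_1^{alg}(S)| \leq 9$ due to Reid. If one can establish residual finiteness of $\pi_1(S)$, then this uniform bound on finite quotients immediately forces $\pi_1(S)$ itself to be finite. The natural route to residual finiteness is through the Shafarevich conjecture on holomorphic convexity of $\widetilde S$: in the very restricted numerical regime $K^2 = 2$, $p_g = q = 0$, one would hope to rule out the exotic coverings of Napier--Ramachandran type and to apply non-abelian Hodge theory (Simpson, Katzarkov--Ramachandran) to show that any infinite linear quotient of $\pi_1(S)$ would produce nontrivial holomorphic $1$-forms, contradicting $q = 0$.

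The main obstacle is precisely the passage from the algebraic to the topological fundamental group. The numerical invariants $K^2 = 2$, $p_g = q = 0$ control finite covers quite well, but they give no a priori handle on whether the kernel of $\pi_1(S) \to \pi_1^{alg}(S)$ is trivial, and without residual finiteness Reid's bound is silent. The structure theorem for quotients of products of curves proved in the present paper will at best settle the conjecture in that special family; the general case seems to require genuine progress on the Shafarevich conjecture for surfaces of general type with the smallest possible invariants, where standard $L^2$ and Hodge-theoretic tools are weakest.
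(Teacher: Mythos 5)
The statement you were asked about is posed in the paper as an open conjecture; the paper contains no proof of it, so there is nothing to reconstruct or compare against. You correctly recognized this and were right not to manufacture an argument.

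As for the strategy you sketch: the one genuinely solid piece is the reduction via residual finiteness. Since $\pi_1^{alg}(S)$ is the profinite completion of $\pi_1(S)$ and is known to have order at most $9$ for numerical Campedelli surfaces, residual finiteness of $\pi_1(S)$ would make the map $\pi_1(S)\to\pi_1^{alg}(S)$ injective and hence force $|\pi_1(S)|\leq 9$. This is exactly the kind of reduction the authors have in mind: elsewhere in the paper they conjecture that fundamental groups of all surfaces with $p_g=q=0$ are residually finite, and they note (via Toledo's examples) that residual finiteness can genuinely fail for projective surfaces with $p_g>0$, which is why the numerical hypotheses matter. Your first paragraph, by contrast, does not lead anywhere as stated: for a degree-$d$ \'etale cover one gets $K_{S'}^2=2d$ and $\chi(\Oh_{S'})=d$, which sits comfortably inside the Bogomolov--Miyaoka--Yau and Noether regions for every $d$, so no contradiction can come from those inequalities alone; and the claim that the Albanese dimension of the covers must eventually become positive is precisely the hard unknown, not something one can extract from the invariants. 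The honest summary is that your proposal correctly identifies the two standard attack routes (residual finiteness plus Reid's bound, and Shafarevich-type holomorphic convexity) and correctly identifies why both are currently out of reach; what the paper actually contributes to this circle of ideas is only the special case of product-quotient surfaces, where Theorem 0.9 shows the fundamental group is finite (indeed $\ZZ_3$, $\ZZ_5$, $\ZZ_2^2$, $\ZZ_2^3$, or $\ZZ_2\times\ZZ_4$) for all eight families with $K^2=2$.
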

\begin{question}\label{qu1}
Does every group of order $\leq 9$ except $D_4$ and $D_3$ occur?
\end{question}
The answer to question \ref{qu1} is completely open for $\ZZ_4$;  for
$\ZZ_6, \ZZ_2$ one suspects that these fundamental groups
are realized by the the  Neves-Papadakis surfaces,
respectively by  the  Lee-Park surfaces.

Note that the existence of the case where $\pi_1(S) = \ZZ_7$ is shown 
in the paper \cite{cvg}
(where the result is slightly hidden).

\item
$K_S^2 = 3$: here there were two examples of nontrivial fundamental groups,
the first one due to Burniat and  Inoue,  the second one to Keum and 
Naie (\cite{burniat},
\cite{inoue}, \cite{keum}
\cite{naie94}).

Mendes Lopes and Pardini proved (\cite{mlp1})  that for $K_S^2 = 3$
the algebraic
fundamental group is finite, and one can ask as in 1) above whether
also $\pi_1(S)$ is
finite. Park, Park and Shin (\cite{pps3}) showed the existence of
simply connected
surfaces, and of surfaces with
torsion $\ZZ_2$ (\cite{pps3b}).

Other constructions  were given in \cite{sbc},  together
with two more examples
   with $p_g(S)=0, K^2 = 4,5$: these turn out however to be the same 
as the Burniat surfaces.
\item
$K_S^2 = 4$: there were known up to now three examples of fundamental groups,
the trivial one (Park, Park and Shin , \cite{pps4}), a finite one, and an
infinite one.
We show here the existence of 7 new groups,
3 finite and 4 infinite: thus minimal surfaces
with $K_S^2 =
4$,
$p_g(S) = q(S)=0$ realize at least 10 distinct topological types.
\item
$K_S^2 = 5,6,7$: there was known up to now only one example of
fundamental group
for each such value of $K_S^2 $
\item
$K_S^2 = 6$ : we show in this paper the existence of 6 new groups,
three of which
finite: thus minimal surfaces with $K_S^2 = 6$,
$p_g(S) = q(S)=0$ realize at least 7 distinct topological types.
\item
$K_S^2 = 7$ : we shall show elsewhere (\cite{toappear}) that the 
fundamental group of these surfaces,
constructed by Inoue in \cite{inoue},
have a fundamental group fitting into an exact sequence
$$1 \rightarrow \Pi_3 \times \ZZ^4 \rightarrow \pi_1 \rightarrow
\ZZ_2^3\rightarrow 1.$$
This motivates the following further question
\end{itemize}

\begin{question}  Is it true that fundamental groups of surfaces of 
general type
with $ q=p_g=0$ are finite for $ K^2_S \leq 3$, and infinite for $ 
K^2_S \geq 7$?
\end{question}

Let us observe that, as  we just saw, for $ 4 \leq K^2_S \leq 6$
both finite and infinite groups occcur; moreover, there is no reason 
to exclude the existence
of such   surfaces with $ K^2_S = 7$ and with finite fundamental group.

In a forthcoming paper (\cite{bp}) the first and the fourth  author 
shall show the existence, for $ K^2_S = 5$, of
7 new fundamental groups, 3 of which infinite, and also
of 3 new finite fundamental groups for $ K^2_S = 4$, and
of 4 new finite fundamental groups for $ K^2_S = 3$.

One of the reasons why we succeed in constructing so many  new families of
surfaces with $K_S^2 = 2,4, 6$, thereby showing that 14 new fundamental groups
are realized, is the fact that we use a systematic method
and classify completely the following situation.

We consider the algebraic surfaces whose canonical models arise
    as quotients $X =(C_1 \times
C_2)/G$   of the product $C_1 \times
C_2$ of two curves of  genera
$g_1 := g(C_1), g_2 : = g(C_2) \geq 2$, by the action of a finite group $G$.
In other words, we make the restriction that $X$ has only rational
double points as
singularities.

We achieve a complete classification of  the surfaces $X$ as above which have
$p_g(X) = q(X)=0$.

An interesting corollary of our classification is
that then either

i) $G$ acts freely, i.e., equivalently, $X$
is smooth (hence $K^2_S = 8$, and we are in the case previously classified
in \cite{bcg}), or

ii) $X$ has only nodes ($A_1$ singularities) as singular points:
in this case moreover  their number is even and equal to $8 - K^2_S$.

This result explains why our construction only produces
new surfaces with $K^2_S = 2,4,6$, as illustrated by the
following  three main theorems concerning surfaces with $p_g(S) = q(S)=0$.

For convenience of notation we shall use the following

\begin{defin}
A surface with rational double points which is the quotient of a
product of curves
by the action of a finite group will be called a {\em 
product-quotient} surface.

Number 2 will mean: ` number 2 in the list given  in Table \ref{surfaces}'.
\end{defin}

\begin{theo}\label{campedelli}
There exist eight  families of product-quotient  surfaces yielding 
numerical Campedelli surfaces (i.e.,
minimal surfaces with
$K^2_S = 2, p_g (S)=0$).

Two of them (numbers 2 and 5,
with group
$G=\mathfrak{S}_5$, resp. $G=\mathfrak{S}_3 \times \mathfrak{S}_3$) 
yield fundamental group $ \ZZ_3$.
\end{theo}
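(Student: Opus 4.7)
The plan is to combine numerical constraints coming from the product-quotient construction with a finite enumeration over finite groups $G$ and branching data, and then to compute the fundamental group in the two highlighted cases via the structure theorem announced in the abstract. First I would write down the standard formulas expressing $K_X^2$, $\chi(\Oh_X)$, and (using the a priori vanishings) $p_g(X)$ and $q(X)$ for $X = (C_1 \times C_2)/G$ in terms of $|G|$, the genera $g_1, g_2$, and the two orbifold signatures $T_1, T_2$ of the coverings $C_i \to C_i/G$, corrected by the contribution of the singular points. Since the paper has already shown that the singular locus of such an $X$ (in the non-free case) consists of exactly $8 - K_X^2$ nodes, imposing $K_X^2 = 2$ forces six nodes and $p_g = q = 0$ forces $\chi(\Oh_X)=1$, yielding a tight diophantine system in $(|G|, g_1, g_2, T_1, T_2)$.

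Second, I would carry out the combinatorial search. Hurwitz-type inequalities together with the $\chi = 1$ equation and the "six nodes" constraint (which forces many stabilisers to have order two and pins down their number) bound $|G|$ within a small range. For each admissible $|G|$ and signature pair $(T_1, T_2)$ one lists all pairs of spherical systems of generators of $G$ of the prescribed signatures; this is a finite, computer-assisted enumeration. Each such pair gives a candidate $X$, and one discards those whose diagonal stabilisers produce singularities worse than nodes, and identifies pairs equivalent under the natural action of $\Aut(G)$ and of Hurwitz moves. The claim is that this census yields exactly the eight families of Table \ref{surfaces}.

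Third, for families $2$ ($G=\mathfrak{S}_5$) and $5$ ($G=\mathfrak{S}_3 \times \mathfrak{S}_3$), I would apply the structure theorem for $\pi_1(X)$: it is the quotient of the fibre product over $G$ of the two orbifold surface groups associated to the data $(g_i, T_i)$, modulo the normal subgroup generated by the "local torsion" elements produced by the six nodes. With the explicit spherical generating vectors fixed in the classification step, this presentation is completely explicit, and standard Tietze transformations, together with the abelianisation check $H_1(X,\ZZ) = \ZZ_3$, should reduce it to $\ZZ_3$ in both cases.

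The main obstacle I expect is the completeness of step two: it is easy to either miss admissible $(G, T_1, T_2)$ triples or erroneously discard cases that in fact yield only nodal singularities, so a careful, computer-verified case analysis is essential. The final group-theoretic reduction to $\ZZ_3$ in the two special cases is in principle mechanical but requires care to ensure no extra relations collapse the group further and, conversely, that the presentation does not secretly admit a larger finite (or even infinite) quotient compatible with $H_1(X,\ZZ) = \ZZ_3$.
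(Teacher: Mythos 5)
Your proposal follows essentially the same route as the paper: reduce to $t=8-K^2=6$ nodes and $\chi=1$, bound the signatures and $|G|$ by Hurwitz-type inequalities, enumerate pairs of spherical generating systems up to $\Aut(G)$ and Hurwitz moves by computer, discard the too-singular cases, and compute $\pi_1$ as the fibre product $\HH$ modulo the torsion generated by the node stabilisers (Armstrong's theorem), checking via the presentation and $H_1=\ZZ_3$ that families 2 and 5 give $\ZZ_3$. This matches the paper's proof of Theorems \ref{campedelli} and \ref{classpgq=0} via the MAGMA scripts and Proposition \ref{fundg}.
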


Our classification also shows the existence  of a family of product
-quotient  surfaces
yielding numerical Campedelli surfaces  with fundamental group $ \ZZ_5$
(but numerical Campedelli surfaces  with fundamental group $ \ZZ_5$
had already been constructed in \cite{Babbage}),
respectively with
fundamental group
$\ZZ_2^2$ (but such fundamental group already appeared in \cite{inoue}).

\begin{theo}\label{4}
There exist precisely eleven families  of product-quotient  surfaces yielding
minimal surfaces with
$K^2_S = 4, p_g (S)=0$.

Three of these families (numbers 9, 12, 15) realize
3 new finite fundamental groups, $\ZZ_{15}$, $G(32,2)$ (see rem. 
\ref{---}) and $( \ZZ_3)^3$.

Six of these families (numbers 8 and 14, 10 and 13, 11, 16) realize
4 new infinite fundamental groups.

\end{theo}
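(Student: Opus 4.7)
The plan is to apply the classification machinery set up earlier in the paper to the numerical regime $K_S^2 = 4$, $p_g=q=0$. By the corollary stated in the introduction, if $X=(C_1\times C_2)/G$ is a product-quotient surface with $p_g=q=0$ and $X$ singular, then $X$ has only nodes and their number is precisely $8-K_S^2$; for $K_S^2=4$ this forces exactly four nodes on the canonical model. This reduces everything to classifying the combinatorial data: the two genera $g_1,g_2\geq 2$, the finite group $G$, and either a pair of appropriate generating vectors (unmixed type) or a single generating vector with an index-two subgroup (mixed type, requiring $C_1\cong C_2$).

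The next step is to translate the geometric constraints into arithmetic/combinatorial ones. Riemann--Hurwitz applied to $C_i\to C_i/G$ (or to the index-two subgroup in the mixed case) expresses $(g_i-1)$ in terms of $|G|$ and the signatures $T_1=(m_1,\dots,m_r)$, $T_2=(n_1,\dots,n_s)$. The quantities $K_X^2=8(g_1-1)(g_2-1)/|G|$ and $e(X)$ (corrected by node contributions) together with $K_S^2=4$ and $\chi(S)=1$ yield two linear/rational equations involving $|G|$, $g_i$ and the $T_i$. The constraint that $X$ has exactly four nodes imposes that the non-free fixed locus of $G$ on $C_1\times C_2$ consists of exactly four orbits of isolated fixed points whose stabilizers act as the antidiagonal $\pm 1$ on the tangent space. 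This gives effective bounds on the signatures and on $|G|$, so that only finitely many candidate tuples $(G,T_1,T_2)$ remain, enumerable by a bounded computer search (e.g.\ in \texttt{MAGMA}/\texttt{GAP} using the SmallGroups library).

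For each admissible tuple one must then produce actual generating vectors realizing it inside $G$, check the nodal condition cohomologically (so that the quotient is a canonical model), and verify $p_g=q=0$ via Chevalley--Weil or the $G$-module decomposition of $H^0(\Omega^1)$ on $C_1$ and $C_2$. The fundamental group of each resulting $S$ is then computed by the structure theorem advertised in the abstract, which describes $\pi_1((C_1\times C_2)/G)$ as an extension involving the orbifold surface groups associated to the $T_i$ and the homomorphisms to $G$; in the nodal case one further needs the observation that minimal resolution of $A_1$-singularities does not change $\pi_1$. Finally, the resulting list of groups is compared against Table~\ref{tabknown} to single out the eleven families and identify which fundamental groups are genuinely new, yielding the three finite groups $\ZZ_{15}$, $G(32,2)$, $(\ZZ_3)^3$ and the four new infinite groups.

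The main obstacle is the enumeration step together with its verification: the bounds on $|G|$ and on the signature lengths have to be sharp enough to make the search terminate, yet loose enough to catch all cases, and within each candidate $G$ one must genuinely exhibit generating vectors (not merely Hurwitz-compatible signatures) and check the nodality of the $G$-action pointwise. Showing that distinct families give distinct irreducible components of the moduli space—so that the count ``precisely eleven'' is correct and not an overcount—requires checking that different $(G,T_1,T_2)$ data with the same topological invariants cannot be deformed into each other, which is handled via the fact that the equisingular deformations of a product-quotient surface are parametrized by the Teichm\"uller spaces of the orbifold quotients.
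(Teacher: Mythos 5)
Your strategy is essentially the one the paper follows: reduce to the case of exactly $8-K^2=4$ nodes (corollary \ref{onlynodesoracase}), translate $\chi(\mathcal O_S)=1$ and the nodal condition into arithmetic constraints on the signatures and on $|G|$ (lemmas \ref{alpha-g}--\ref{boundrm}), enumerate the finitely many triples $(T_1,T_2,G)$ by a \textbf{MAGMA} search, realize each by pairs of spherical generating systems, discard the too-singular ones, and compute $\pi_1$ as $\HH/\Tors(\HH)$ (prop.~\ref{fundg}), using the structure theorem \ref{strfund} to identify the resulting groups and Table \ref{tabknown} to decide which are new. A few of your intermediate steps are replaced in the paper by simpler ones: $q=0$ forces $g_1'=g_2'=0$ and then $p_g=0$ follows from Noether's formula, so no Chevalley--Weil computation is needed; the nodality check is purely group-theoretic (powers of the local monodromies that become conjugate must have order exactly $2$, with the right count of orbits), not cohomological; and the paper works throughout with unmixed (diagonal) actions, so the mixed case you allow for is outside the scope of theorem \ref{classpgq=0}.

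The one place where your plan goes astray is the last paragraph. ``Precisely eleven families'' is \emph{not} a statement about irreducible or connected components of the moduli space, and it is not established by any deformation or Teichm\"uller-theoretic argument. A ``family'' here is an equivalence class of pairs of spherical systems of generators modulo the simultaneous action of $\Aut(G)$ and the two braid groups (Hurwitz moves); the count of eleven is the output of the script \textbf{FindAllComponents} applied to the eleven admissible triples for $K^2=4$. The paper explicitly warns that these families need not be components of the moduli space (the Enriques--Kuranishi inequality already rules this out for several of them), and it records as an open question whether, e.g., families 17 and 18 lie on a common component. So the step you flag as the ``main obstacle'' --- showing that different data cannot be deformed into one another --- is neither needed for the theorem nor, in general, true; attempting to prove it would stall the argument. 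What \emph{is} needed, and what the Hurwitz/$\Aut(G)$ equivalence delivers, is only that the combinatorial data have been counted without repetition.
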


The two families 17, 18 realize instead a fundamental group $\Ga$ which is
isomorphic to the one of the surfaces constructed by Keum
(\cite{keum}) and Naie
(\cite{naie94}). It was not clear to us whether these four families 
of surfaces all
belonged to a unique
irreducible component of the moduli space.
This question has motivated (in the time between the first and the 
final version of this paper)
work by the first two authors (\cite{keumnaie}): it is shown there a 
construction
of Keum-Naie surfaces yielding a unique irreducible connected component of
the moduli space, to which all
minimal  surfaces $S$
with fundamental group $\Ga$, $K^2_S = 4, p_g (S)=0$ belong, provided either

a) they are homotopically equivalent to a Keum-Naie surface, or

b) they admit a deformation to a surface with ample canonical divisor.

We come here to an important difference between the case
of surfaces isogenous to a product of curves with  $K^2_S = 8, p_g (S)=0$,
classified in \cite{bcg}, and the
cases classified  the present paper.
Namely, unlike the case of surfaces isogenous to a product of curves,
our product-quotient
surfaces do not necessarily form a connected, or even an irreducible component
of the moduli space.

In fact, by the Enriques-Kuranishi  inequality
for the number of
moduli $ M(S)$ (the local dimension of the moduli space),
$ M (S)  \geq 10 \chi(S) -2 K^2_S$, our families number 1-10 and 14
necessarily do
not contain  an open set  of the moduli space.

It is then an interesting question to see first of all whether the nodes of our
product-quotient surfaces
can be smoothed, or even independently smoothed (see \cite{enr} for the
consequences about singularities of the moduli space),
and more generally to investigate  the local structure of the moduli 
space at the sublocus corresponding to
our families of product-quotient surfaces.

A second harder problem is to try to describe the irreducible (resp.: 
connected)
components of the moduli space containing these subloci.

Still referring to  table \ref{tabknown} shown previously, this is 
possible in some cases.
Note first of all that Mendes Lopes and Pardini (\cite{burniatmp})
proved that the 'primary' Burniat surfaces
(those with
$K^2 = 6$) form a connected component of the moduli space.
This result has been reproven in the meantime by the first two authors in
\cite{burniat1}, showing more generally that any surface 
homotopically equivalent to a primary Burniat
surface is indeed a primary Burniat surface.

The positive results of
\cite{burniat2}, showing that also Burniat surfaces with $K^2_S = 5,4$
form 3 connected components of the moduli space, seem to depend 
heavily on the fact that these
surfaces are realized as finite  Galois covers with group $(\ZZ_2)^2$ 
of Del Pezzo surfaces.

In other words, our constructions offer several interesting 
challenges concerning
the investigation of the moduli space, but these cannot be solved all 
at once, and
for each family of surfaces one has to resort very much to
the special geometric properties which the surfaces possess, and 
which are not being lost
by deformation.

The next case is all the more challenging, since the Enriques-Kuranishi
inequality gives no information.

\begin{theo}\label{6}
There exist eight families (numbers 19-24) of product
-quotient  surfaces yielding minimal surfaces with
$K^2_S = 6, p_g (S)=0$ and realizing 6 new fundamental groups,
three of them finite and three of them infinite.
   In particular, there exist minimal surfaces of general type with
$p_g=0$, $K^2=6$ and
with finite fundamental group.
\end{theo}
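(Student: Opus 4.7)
The plan is to carry out the same strategy as in the proofs of Theorems \ref{campedelli} and \ref{4}, specialised to $K^2_S = 6$. The starting point is the dichotomy recalled in the introduction: for a product-quotient surface $X = (C_1 \times C_2)/G$ with $p_g = q = 0$, either $G$ acts freely (which forces $K^2_S = 8$) or the only singularities of $X$ are nodes, in number exactly $8 - K^2_S$. For $K^2_S = 6$ this means that one must classify the data $(G, C_1, C_2)$ such that $(C_1 \times C_2)/G$ has $p_g = q = 0$ and exactly two nodes.

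The next step is to translate this into a finite combinatorial problem. By Riemann existence, the $G$-action on $C_i$ is encoded by a spherical system of generators $V_i$ with a prescribed signature $(g'_i; m_{i,1}, \ldots, m_{i,r_i})$, and the genera $g_i$ are then determined by Riemann--Hurwitz. The identity
$$K^2_S = \frac{8(g_1-1)(g_2-1)}{|G|} = 6$$
together with the condition $\chi(\Oh_S) = 1$, the vanishing of $q$, and the requirement that the set of pairs $(g_1, g_2) \in G \times G$ fixing points on both curves produces exactly two nodes on the quotient, bounds $|G|$, the signatures and the genera inside an explicit finite range. I would then run a computer search (say, in MAGMA over the small-groups library) enumerating all admissible $(G, V_1, V_2)$, then group the outputs up to the natural equivalences (Hurwitz moves, $\Aut(G)$, swapping the two factors) to obtain the eight families numbered 19--24.

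For each surviving family, the fundamental group $\pi_1(S)$ of the minimal resolution is computed by applying the structure theorem announced in the abstract, which expresses $\pi_1(S)$ as an extension assembled from the orbifold surface groups $\BT(g'_i; m_{i,1}, \ldots, m_{i,r_i})$ by the diagonal $G$-action, with additional relations induced by the two nodes and their $(-2)$-curve resolutions (the latter is the new ingredient compared to the smooth case of \cite{bcg}). For the finite groups this computation is completed with GAP, while for the infinite ones one identifies a finite-index normal subgroup arising as the fundamental group of an unramified cover which is itself a product-quotient with milder data, ideally a product of curves or a bielliptic-type surface.

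The final step is to compare the six groups so obtained with the single entry for $K^2=6$ in Table \ref{tabknown} and check pairwise non-isomorphism, thereby certifying that all six are new and splitting them into three finite and three infinite. The main obstacle will be the last two points: the presentations produced by the structure theorem are typically large, and ruling out accidental isomorphisms (especially among the infinite groups, where one cannot simply invoke a finite-group identification) requires extracting invariants such as $H_1$, the maximal finite quotient, or the abelianisation of suitable finite-index subgroups, and checking that these are pairwise distinct and different from those of the Burniat--Inoue example.
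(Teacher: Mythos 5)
Your proposal follows essentially the same route as the paper: reduce to the case of exactly two nodes, turn the problem into a finite enumeration of signatures and groups via Riemann existence and the numerical constraints coming from $\chi(\Oh_S)=1$ and $K^2_S=6$, run a MAGMA search up to Hurwitz moves and $\Aut(G)$, compute $\pi_1(S)$ as the fibre product of the two polygonal groups modulo its torsion subgroup, and then use the structure theorem (a finite-index normal subgroup isomorphic to a product of surface groups) together with invariants such as $H_1$ and finite-index subgroups with free abelianization to identify and distinguish the six groups. This is precisely how the paper obtains the theorem as a by-product of its classification, so the approaches coincide.
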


Since a main new contribution of our paper is the calculation of
new fundamental groups (as opposed to new algebraic fundamental groups),
our second main purpose is to describe in greater generality
the fundamental groups of smooth projective varieties which
occur as the minimal resolutions of the quotient of a product of
curves by the action
of a finite group.

More precisely,  in this paper we are concerned with the following
rather (but not completely) general situation.

Let $C_1, \ldots , C_n$ be smooth projective curves of respective
genera $g_i \geq 2$ and
suppose that there is a finite group $G$ acting faithfully on each of the $n$
curves.

Then we
consider the diagonal action of $G$ on the Cartesian product $C_1 \times
\ldots , C_n$, and
the quotient $X:= (C_1 \times \ldots \times C_n) / G$.

Recall that if the action
of $G$ on $C_1\times \ldots \times C_n$ is free, then $X$ is said to be a
  {\em variety isogenous to a (higher) product
of curves (of unmixed type)}. These varieties were introduced and
studied by the second
author in \cite{FabIso}, mainly in the case $n=2$. In this case the
universal cover of $X$
is  the product of
$n$ copies of the upper half plane
$\mathbb H \times \ldots \times \mathbb H$.

We  drop here the hypothesis that $G$ acts freely on $C_1 \times
\ldots \times C_n$.
Then $X$ has singularities, but since they are cyclic quotient
singularities, they can be
resolved, in the case where they are isolated singularities, by  a
simple normal
crossing divisor whose components are smooth rational varieties
(\cite{fujiki}). By van
Kampen's theorem this implies that the fundamental group of $X$ is equal to the
fundamental group of a minimal desingularisation $S$ of $X$.

One of the preliminary observations  of \cite{FabIso} was the following:

\begin{prop}
     Let $X:= (C_1 \times \ldots \times C_n) / G$ be isogenous to a
product as above. Then
the fundamental group of $X$ sits in an exact sequence
$$ 1 \rightarrow \Pi_{g_1} \times \ldots \times \Pi_{g_n} \rightarrow \pi_1 (X)
\rightarrow G \rightarrow 1,
$$ where $\Pi_{g_i}:=\pi_1(C_i)$, and this extension is determined by
the associated maps
$ G \ra \mathfrak Map_{g_i} : = Out (\Pi_{g_i})$  to the respective
Teichm\"uller modular
groups.
\end{prop}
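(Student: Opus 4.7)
The plan is to derive the exact sequence from covering-space theory and then identify the extension data with the mapping-class-group data by exploiting the centrelessness of higher-genus surface groups.

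First, by assumption $G$ acts freely on $Y := C_1 \times \cdots \times C_n$, so the quotient map $Y \to X$ is an unramified Galois covering with deck transformation group $G$. Covering space theory then yields the short exact sequence
$$1 \to \pi_1(Y) \to \pi_1(X) \to G \to 1,$$
and since the fundamental group of a product is the product of the fundamental groups, $\pi_1(Y) = \Pi_{g_1} \times \cdots \times \Pi_{g_n}$, giving the sequence of the proposition.

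Next, I build the homomorphisms $\varphi_i \colon G \to \mathfrak{Map}_{g_i}$. Since the $G$-action on $Y$ is diagonal, for every $g \in G$ any lift $\tilde g \in \pi_1(X)$ normalizes each factor $\Pi_{g_i}$ of $N := \Pi_{g_1} \times \cdots \times \Pi_{g_n}$ separately. Two lifts of $g$ differ by an element $(n_1, \ldots, n_n) \in N$, and the conjugation action on $\Pi_{g_i}$ changes only by conjugation by $n_i$, because the other $n_j$ commute with $\Pi_{g_i}$. Hence the induced automorphism of $\Pi_{g_i}$ is well-defined modulo inner automorphisms, yielding $\varphi_i \colon G \to \mathrm{Out}(\Pi_{g_i}) = \mathfrak{Map}_{g_i}$. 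Collectively, the outer action $G \to \mathrm{Out}(N)$ factors through $\prod_i \mathrm{Out}(\Pi_{g_i}) \hookrightarrow \mathrm{Out}(N)$.

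Finally, I invoke the classical Eilenberg--MacLane classification: equivalence classes of group extensions $1 \to N \to \Gamma \to G \to 1$ realizing a prescribed outer action $G \to \mathrm{Out}(N)$ form, when non-empty, a torsor under $H^2(G, Z(N))$. For surface groups $\Pi_{g_i}$ of genus $g_i \geq 2$ the center is trivial, hence $Z(N) = 1$, so $H^2(G, Z(N)) = 0$ and the extension is determined by its outer action alone. Combined with the factor-preservation statement above, this outer action is precisely the tuple $(\varphi_1, \ldots, \varphi_n)$, as claimed. There is no substantive obstacle here: the one point needing brief verification is the factor-preservation of the conjugation action, which is however immediate from the fact that a single $g \in G$ acts on $Y$ by an automorphism that restricts separately to each $C_i$.
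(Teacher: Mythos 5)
Your proof is correct. The paper states this proposition without proof, citing \cite{FabIso}; your argument --- the covering-space exact sequence for the free $G$-action, factor-preservation of the conjugation action coming from the diagonal (product) nature of the deck transformations, and the Eilenberg--MacLane rigidity of extensions with centerless kernel (using $Z(\Pi_{g_i})=1$ for $g_i \geq 2$, so that $H^2(G,Z(N))=0$ and the extension is determined by its outer action) --- is precisely the standard argument given in that reference.
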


If one drops the assumption about  the freeness of the action of $G$ on
$C_1 \times \ldots \times C_n$, there is no reason that the behaviour of the
fundamental group of the
quotient should be similar to the above situation. Nevertheless it
turns out that, as an
abstract group,  the fundamental group admits a very similar
description.

Before giving the main result of the first part of our paper, which is a
structure theorem for the
fundamental group of $X=(C_1 \times \ldots \times C_n) / G$, we need
the following
\begin{defin}
     We shall call the fundamental group $\Pi_{g}:=\pi_1(C)$ of a smooth
compact complex curve
of genus
$g$ a {\em (genus g) surface group}.
\end{defin} Note that we admit also the ``degenerate cases'' $g = 0, 1$.

\begin{theo}\label{pi} Let $C_1, \ldots , C_n$ be compact complex
curves of respective
genera
$g_i \geq 2$ and let $G$ be a finite group acting faithfully on each
$C_i$ as a group of biholomorphic transformations.

  Let $X=(C_1 \times
\ldots \times C_n) /
G$, and denote by $S$ a minimal desingularisation of $X$. Then the fundamental
group
$\pi_1(X) \cong \pi_1(S)$ has a normal subgroup $\mathcal N$ of finite index
which  is isomorphic to the product of surface groups, i.e., there
are natural numbers
$h_1, \ldots , h_n \geq 0$ such that $\mathcal N \cong \Pi_{h_1}
\times \ldots \times
\Pi_{h_n}$.
\end{theo}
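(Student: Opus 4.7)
The plan is to realize $\pi_1(X)=\pi_1(S)$ as a quotient of a fiber product of Fuchsian groups, then exhibit $\mathcal{N}$ as the image of a torsion-free normal product subgroup that sits inside that fiber product.

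Uniformize $C_i = \mathbb{H}/\Pi_{g_i}$ and $C_i/G = \mathbb{H}/\mathbb{T}_i$, where $\mathbb{T}_i$ is the cocompact Fuchsian orbifold group fitting into
\[
1 \to \Pi_{g_i} \to \mathbb{T}_i \xrightarrow{\phi_i} G \to 1.
\]
Form the fiber product $\mathbb{T} := \mathbb{T}_1 \times_G \cdots \times_G \mathbb{T}_n$; it acts diagonally on $\mathbb{H}^n$ with quotient (as complex space) equal to $X$. The stabilizer in $\mathbb{T}$ of any point of $\mathbb{H}^n$ is an intersection of cyclic Fuchsian stabilizers cut out by the fiber-product condition, hence itself cyclic. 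Therefore $X$ has only cyclic quotient singularities, whose minimal resolutions have simply connected exceptional divisors (trees of rational curves); this already yields $\pi_1(S) \cong \pi_1(X)$. Since for $n \geq 2$ removing the discrete fixed locus does not change the (trivial) fundamental group of $\mathbb{H}^n$, van Kampen gives
\[
\pi_1(X) \cong \mathbb{T}/T,
\]
where $T \triangleleft \mathbb{T}$ is the normal closure of the point-stabilizer subgroups.

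By Selberg's lemma, each Fuchsian group $\mathbb{T}_i$ contains a torsion-free normal subgroup of finite index; intersecting with $\Pi_{g_i}$, one can arrange it to lie inside $\ker \phi_i$, and may call it $\Pi_{h_i}$. As a torsion-free cocompact Fuchsian group, $\Pi_{h_i}$ is a surface group of some genus $h_i \geq 0$. Because each $\Pi_{h_i} \subset \ker \phi_i$, the product $\Pi := \Pi_{h_1} \times \cdots \times \Pi_{h_n}$ is automatically contained in $\mathbb{T}$, and is a torsion-free normal subgroup of $\mathbb{T}$ of finite index. Let $\mathcal{N}$ be the image of $\Pi$ in $\pi_1(X) = \mathbb{T}/T$; it is automatically normal of finite index dividing $[\mathbb{T}:\Pi]$.

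The main obstacle is to show that $\mathcal{N} \cong \Pi/(\Pi \cap T)$ is still isomorphic to a product of surface groups. This is delicate because, although no generating elliptic tuple of $T$ lies in the torsion-free subgroup $\Pi$, products of conjugates of such generators can. Exploiting the residual finiteness of $\mathbb{T}_i$, one passes to $\Pi_{h_i}$ deep enough that the image of $T$ in the finite quotient $\mathbb{T}/\Pi$ is faithful enough to force $\Pi \cap T = 1$, whence $\mathcal{N} \cong \Pi = \Pi_{h_1} \times \cdots \times \Pi_{h_n}$. Equivalently, for such a choice, the finite \'etale cover of $S$ corresponding to $\mathcal{N}$ can be realized as the minimal desingularization of a quotient of $\tilde{C}_1 \times \cdots \times \tilde{C}_n$ (with $\tilde{C}_i := \mathbb{H}/\Pi_{h_i}$) by a suitable finite group whose local isotropies are killed by the resolution, so that its fundamental group is directly identified as $\Pi_{h_1} \times \cdots \times \Pi_{h_n}$. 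Controlling this interaction between the local isotropies producing $T$ and the deep cover defining $\Pi$ is the main technical work.
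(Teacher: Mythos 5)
Your first step (uniformizing, forming the fibre product $\mathbb{T}=\mathbb{T}_1\times_G\cdots\times_G\mathbb{T}_n$, and identifying $\pi_1(X)\cong \mathbb{T}/T$ with $T$ the normal closure of the point stabilizers, i.e.\ the torsion subgroup) is exactly the paper's starting point. The fatal gap is in your resolution of what you correctly call "the main obstacle": you cannot arrange $\Pi\cap T=\{1\}$ for a finite-index subgroup $\Pi\leq\mathbb{T}$, no matter how deep you go. Whenever the action is not free, $T$ is a nontrivial normal subgroup of $\mathbb{T}$, and it is in fact infinite in all the interesting cases (e.g.\ whenever $\pi_1(X)$ is finite, $T$ has finite index in the infinite group $\mathbb{T}$). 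For any finite-index $\Pi\leq\mathbb{T}$, the intersection $\Pi\cap T$ has finite index in $T$, hence is infinite and certainly nontrivial; residual finiteness of $\mathbb{T}_i$ lets you avoid finitely many prescribed nontrivial elements, but never an entire infinite normal subgroup. A second, independent sanity check: your construction would give $\mathcal{N}\cong\Pi_{h_1}\times\cdots\times\Pi_{h_n}$ with each $\Pi_{h_i}$ a finite-index subgroup of $\Pi_{g_i}$, $g_i\geq 2$, forcing all $h_i\geq 2$; but the theorem explicitly allows $h_i=0,1$, and the paper's tables contain quotients with $\pi_1(S)$ finite or equal to $\mathbb{Z}^2\rtimes\mathbb{Z}_3$, which contain no genus $\geq 2$ surface group at all. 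So the approach proves something false.

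The actual proof has to work on the other side of the quotient. One first kills, inside each $\mathbb{T}_i$, the powers $c_i^{k_i}$ of the elliptic generators that become torsion in $\HH$; this replaces $\BT(g_i';m_1,\ldots,m_r)$ by a \emph{smaller} orbifold group $\BT(g_i';k_1,\ldots,k_r)$ with $k_j\mid m_j$ (this is where the genera can drop to $0$ or $1$). One then shows that $\HH/\Tors(\HH)$ maps onto a finite-index subgroup $\mathbf H$ of the product of these reduced orbifold groups with \emph{finite} kernel $E_2$ (the finiteness uses a Schur-type lemma: a group generated by finitely many torsion elements with center of finite index is finite). The remaining problem is to split off $E_2$, and this is the genuinely hard point your proposal does not touch: one needs $\HH/\Tors(\HH)$ to be residually finite in order to find a finite-index normal $\Gamma$ with $\Gamma\cap E_2=\{1\}$, and residual finiteness of a group that is only (finite)-by-(residually finite) is not automatic. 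The paper obtains it from Serre's notion of cohomological goodness of orbifold surface groups together with the theorem of Grunewald--Jaikin-Zapirain--Zalesskii. The surface groups $\Pi_{h_i}$ of the conclusion are then finite-index subgroups of the reduced orbifold groups, not of the original $\Pi_{g_i}$.
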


\begin{rema}
In the case of dimension $n=2$ there is no loss of generality in assuming
that $G$ acts faithfully on each
$C_i$ (see \cite{FabIso}). In the general case there will be a group
$G_i$, quotient of
$G$, acting faithfully on $C_i$, hence the strategy should slightly be changed
in the general case.
\end{rema}

\begin{rema}
The fundamental groups of certain 3-manifolds which arise as 
quotients of hyperbolic 3-space by
groups which act discontinuously but not necessarily fixpoint freely 
are discussed in \cite{GM12}. Unlike in the case treated the present 
paper, here the groups do not contain CAT(0) groups of finite index.
\end{rema}

We shall now give a short description of the proof of theorem
\ref{pi} in the case $n=2$.
The case where
$n$ is arbitrary is exactly the same.

We need to recall the definition of an orbifold surface group

\begin{defin}
An {\em orbifold surface group} of genus $g'$ and multiplicities
$m_1, \dots m_r$ is the  group  presented as follows:

\begin{multline*}
\mathbb T (g';m_1, \ldots ,m_r) :=
\langle a_1,b_1,\ldots , a_{g'},b_{g'}$,  $c_1,\ldots, c_r |\\
     c_1^{m_1},\ldots ,c_r^{m_r},\prod_{i=1}^{g'} [a_i,b_i] \cdot c_1\cdot
\ldots \cdot c_r \rangle.
\end{multline*}
In the case $g'=0$ it is called a {\em polygonal group}.

The sequence $(g';m_1, \dots m_r)$ is called the signature of the
orbifold surface group.

\end{defin}

The above definition shows that an orbifold fundamental group is the
factor group
of the fundamental group of the complement, in a complex curve $C'$
of genus $g'$,
of a finite set of $r$ points $\{p_1, \dots , p_r \}$, obtained by
dividing modulo the
normal subgroup generated by $\ga_1^{m_1}, \dots , \ga_r^{m_r}$,
where for each $i$
$\ga_i$ is a simple geometric loop starting from the base point and
going once around the
point
$p_i$ counterclockwise (cf. \cite{FabIso}).

Hence,  by Riemann's existence theorem, to give an action of a finite group $G$
on a curve $C$ of genus $g \geq 2$ is equivalent to giving:

     1) the quotient curve $C' := C /G$

2) the branch point set $\{p_1, \dots p_r \} \subset C'$

3) a surjection of the fundamental group  $\pi_1 ( C'
\setminus \{p_1,
\dots p_r \})$ onto $\mathbb T (g';m_1, \ldots ,m_r) $, such that the given
generators of $\mathbb T (g';m_1, \ldots ,m_r) $ are image elements of a
{\em standard basis} of $\pi_1 ( C' \setminus \{p_1,
\dots p_r \})$.

This means that  $a_1,b_1,\ldots ,
a_{g'},b_{g'}$ are image elements of a symplectic basis of the 
fundamental group
of $C'$, while each
$c_i$ is the image of a simple geometric loop around the point $p_i$.

4)  a surjective homomorphism
$$
\varphi \colon \BT (g';m_1, \ldots ,m_r) \rightarrow G,
$$ such that

5) $\fie (c_i) $ is an element of order exactly $m_i$ and

6) {\em Hurwitz' formula}
holds:
$$ 2g - 2 = |G|\left(2g'-2 + \sum_{i=1}^r \left(1 -
\frac{1}{m_i}\right)\right).
$$

Therefore in our situation we have two surjective homomorphisms
$$
\varphi_1 \colon \BT_1 : =  \BT(g'_1;m_1,\ldots, m_r) \rightarrow G,
$$
$$
\varphi_2 \colon  \BT_2 : =  \BT(g'_2;n_1,\ldots, n_s) \rightarrow G.
$$

\noindent

We define the fibre product $\HH : = {\HH}(G;\varphi_1,\varphi_2)$ as
\begin{equation} \HH : = {\HH}(G;\varphi_1,\varphi_2):= \{ \,
(x,y)\in \BT_1\times \BT_2\
|\
\varphi_1(x)=\varphi_2(y)\,\}.
\end{equation}

Then the exact sequence
\begin{equation} 1 \rightarrow \Pi_{g_1} \times \Pi_{g_2} \rightarrow
\BT_1 \times \BT_2
\rightarrow G \times G \rightarrow 1,
\end{equation} where $\Pi_{g_i} := \pi_1(C_i)$, induces an exact sequence

\begin{equation} 1 \rightarrow \Pi_{g_1} \times \Pi_{g_2} \rightarrow
{\HH}(G;\varphi_1,\varphi_2)
\rightarrow G \cong \Delta_G \rightarrow 1.
\end{equation} Here $\Delta_G \subset G \times G$ denotes the
diagonal subgroup.

\begin{definition}
     Let $H$ be a group. Then its {\em torsion subgroup} ${\Tors}(H)$
is the normal
subgroup  generated by all elements of finite order in $H$.
\end{definition}

The first observation is that one can calculate our fundamental groups
via a simple algebraic recipe: $\pi_1((C_1 \times
C_2)/G)
\cong {\HH}(G;\varphi_1,\varphi_2) / {\Tors}(\HH)$.

\medskip\noindent

In this algebraic setup, we need to calculate
${\HH}(G;\varphi_1,\varphi_2) / \Tors(\HH)$. Our proof is rather 
indirect, and we
have no direct
geometric construction leading to our result (even if we can then explain
its geometric meaning).

The strategy is now the following: using the structure of orbifold
surface groups we
construct an exact sequence
$$ 1 \rightarrow E \rightarrow \mathbb{H} / {\Tors}(\mathbb{H}) \rightarrow
\Psi(\hat{\mathbb{H}}) \rightarrow 1,
$$ where
\begin{itemize}
\item[i)] $E$ is finite,
\item[ii)] $\Psi(\hat{\mathbb{H}})$ is a subgroup of finite index in a product
      of orbifold surface groups.
\end{itemize}

Condition
$ii)$ implies that $\Psi(\hat{\mathbb{H}})$ is residually finite and
``good'' according to
the following

\begin{defin}[J.-P. Serre] Let $\mathbb{G}$ be a group, and let
$\tilde{\mathbb{G}}$ be
its profinite completion. Then $\mathbb{G}$ is said to be {\em good}
iff the homomorphism
of cohomology groups
$$ H^k(\tilde{\mathbb{G}},M) \rightarrow H^k(\mathbb{G},M)
$$ is an isomorphism for all $k \in \NN$ and for all finite
$\mathbb{G}$ - modules $M$.
\end{defin}

Then we use the following result due to F. Grunewald, A.
Jaikin-Zapirain, P. Zalesski.

\begin{theo} {\bf(\cite{GZ})} Let $G$ be residually finite and good,
and let $\varphi \colon
H
\rightarrow G$ be surjective with finite kernel. Then $H$ is residually finite.
\end{theo}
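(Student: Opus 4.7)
The plan is to embed $H$ into a profinite group built from the profinite completion $\tilde G$ of $G$, with goodness serving as the bridge between the abstract central extension of $G$ by $K := \ker\varphi$ and a topological extension of $\tilde G$ by $K$; residual finiteness of $H$ will then follow immediately. I would first reduce to the case in which $K$ is central in $H$. Since $\Aut(K)$ is finite, the centralizer $H_1 := C_H(K)$ has finite index in $H$ and $K \cap H_1 = Z(K)$ lies in $Z(H_1)$. Residual finiteness passes from a finite-index subgroup to the overgroup (using normal cores of separating subgroups), while goodness and residual finiteness are inherited by the finite-index image $G_1 := \varphi(H_1) \leq G$; it therefore suffices to prove the theorem for $H_1$. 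Iterating this reduction on $|K|$ yields the case $K \subseteq Z(H)$.

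In that case the extension $1 \to K \to H \to G \to 1$ is classified by a class $[\alpha] \in H^2(G,K)$ with trivial $G$-action. Goodness of $G$ supplies the isomorphism $H^2(\tilde G,K) \xrightarrow{\ \sim\ } H^2(G,K)$; let $[\hat\alpha]$ be the preimage of $[\alpha]$. It classifies a topological central extension
$$
1 \to K \to E \to \tilde G \to 1,
$$
and $E$ is profinite (being a central extension of a profinite group by a finite one). Pulling this extension back along the canonical embedding $G \hookrightarrow \tilde G$ (injective because $G$ is residually finite) produces an abstract central extension of $G$ by $K$ of class $[\alpha]$, hence equivalent to the original extension $H$. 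This equivalence yields a commutative diagram
$$
\begin{array}{ccccccccc}
1 & \to & K & \to & H & \to & G & \to & 1 \\
  &     & =  &     & \downarrow & & \downarrow & & \\
1 & \to & K & \to & E & \to & \tilde G & \to & 1
\end{array}
$$
whose outer vertical maps are injective, so the five lemma forces $H \to E$ to be injective. As $E$ is profinite, it is residually finite as an abstract group, and hence so is $H$.

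The main obstacle lies in the middle step: extracting from goodness a genuine \emph{topological} central extension of $\tilde G$ by the finite module $K$ whose pullback to $G$ recovers $H$ up to equivalence of extensions. The reduction to the central case and the closing five-lemma embedding are essentially formal once this cohomological input is secured.
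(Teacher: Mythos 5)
The paper never proves this statement --- it is imported verbatim from \cite{GZ}, where it is Prop.~6.1 --- so there is no internal proof to measure you against; judged on its own terms, your argument is correct. The reduction to a central kernel works (indeed a single application suffices: the kernel of $H_1:=C_H(K)\to\varphi(H_1)$ is $K\cap H_1=Z(K)$, which is already central in $H_1$, so the ``iteration on $|K|$'' is not needed), and it legitimately invokes the facts, quoted elsewhere in the paper from \cite{GZ}, that goodness passes to finite-index subgroups and that residual finiteness passes both down to subgroups and up from a finite-index subgroup. The cohomological step is the standard mechanism behind the result (it is essentially Serre's exercise in \S I.2.6 of \emph{Cohomologie galoisienne}): after the reduction the kernel is finite abelian with trivial $G$-action, surjectivity of $H^2(\tilde G,K)\to H^2(G,K)$ supplies a continuous cocycle on $\tilde G$, and the twisted product $K\times\tilde G$ it defines is compact, Hausdorff and totally disconnected, hence profinite; this identification of continuous $H^2$ with finite coefficients with topological extensions is exactly the point you flag as the remaining obstacle, and it is standard. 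The five-lemma embedding $H\hookrightarrow E$ and the conclusion are then immediate. For comparison, the published proof of \cite{GZ} is organized slightly differently: it uses the same $H^2$-comparison to show that $K$ injects into the profinite completion $\hat H$, and then separates a nontrivial $h\in H$ in a finite quotient according to whether $\varphi(h)\neq 1$ (using residual finiteness of $G$) or $h\in K$. Your version trades that case distinction for an explicit reduction to a central kernel and a cleaner finish (a subgroup of a profinite group is residually finite); the two routes are equivalent in substance.
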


The above theorem implies that $\mathbb{H} / {\Tors}(\mathbb{H})$
is residually finite,
whence there is a subgroup $\Gamma \leq \mathbb{H} / {\bf
        Tors}(\mathbb{H})$ of finite index such that
$$
\Gamma \cap E = \{1\}.
$$ Now, $\Psi(\Gamma)$ is a subgroup of $\Psi(\hat{\mathbb{H}})$ of finite
      index, whence of finite index in a product of orbifold surface groups, and
$\Psi| \Gamma$ is injective. This easily implies our result.

In the second part of the paper, as already explained at length,
   we use the above results in order
to study the moduli spaces of surfaces $S$ of general type  with
$p_g(S)=q(S)=0$.

While the investigation of the moduli space of surfaces $S$ of
general type  with
$p_g(S)=q(S)=0$ and $K^2_S \leq 7$ is (as our research indicates) an extremely
difficult task, for
instance since there could be over a hundred  irreducible components, it makes
sense to try first to ask some  'easier' problems:

\begin{problem} Determine the pairs $ ( K^2_S, \pi_1 (S)) $
for all the surfaces with
$p_g(S)=q(S)=0$.
\end{problem}

\smallskip

Note that the pair $ ( K^2_S, \pi_1 (S)) $ is an invariant of the 
connected component of the moduli space,
hence we know that there are only a finite numbers of such pairs.

Several question marks in our previous table rally about the following

\begin{conj} The fundamental groups $ \pi_1 (S) $
of surfaces with
$p_g(S)=q(S)=0$ are residually finite.
\end{conj}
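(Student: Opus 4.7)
The plan is to reduce the conjecture to structural results about fundamental groups, exploiting the fact that most known surfaces with $p_g=q=0$ fall into one of a handful of well-understood construction classes. Since residual finiteness is preserved under extensions by finite groups and, for finitely generated groups, by passage to finite-index subgroups, it suffices to locate inside $\pi_1(S)$ a finite-index subgroup of a form we already know to be residually finite; when $\pi_1(S)$ is itself finite there is nothing to prove, which settles a substantial portion of the entries in Table \ref{tabknown}.

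For the product-quotient surfaces classified in this paper, the reduction is essentially automatic: by Theorem \ref{pi}, $\pi_1(S)$ contains a normal subgroup $\mathcal N$ of finite index isomorphic to a product $\Pi_{h_1}\times \cdots \times \Pi_{h_n}$ of surface groups, and surface groups are well known to be residually finite, hence so is $\pi_1(S)$. The same strategy handles the other classical constructions: for $K^2_S = 9$ (ball quotients) and for $K^2_S = 8$ (bidisk quotients and surfaces isogenous to a product), $\pi_1(S)$ is a cocompact lattice in a semisimple real Lie group, hence linear and therefore residually finite by Mal'cev; for the Burniat, Keum--Naie and Inoue surfaces, $\pi_1(S)$ sits in an extension $1\to A\to \pi_1(S)\to F\to 1$ with $F$ finite and $A$ either free abelian or a product of a surface group with a lattice $\ZZ^k$, which is again virtually residually finite.

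The main obstacle, and the reason this is stated only as a conjecture, is that no classification of surfaces with $p_g=q=0$ is available, so it cannot be settled by enumerating constructions alone. A plausible route toward the general statement is to identify a canonical residually finite quotient of $\pi_1(S)$, for instance by showing that every such $\pi_1(S)$ admits a finite-index subgroup acting non-trivially on a naturally attached algebraic object (an \'etale cover, a pluricanonical system, or Shafarevich's universal holomorphic convex cover). Equivalently, one might try to upgrade the Grunewald--Jaikin-Zapirain--Zalesski theorem cited above by constructing, for each such $S$, a surjection onto $\pi_1(S)$ from a \emph{good} residually finite group built out of orbifold surface groups, with finite (or at least controlled) kernel. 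The hardest part, and the real content of the conjecture, is to exclude the existence of an exotic construction producing a $\pi_1(S)$ with a simple infinite quotient or some other obstruction to residual finiteness; any proof must handle surfaces whose existence is so far only hypothetical, so the plan above describes a program rather than a complete argument.
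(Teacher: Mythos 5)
The statement you were asked about is a \emph{conjecture}: the paper does not prove it, and explicitly says that ``evidence for the conjecture is provided by the existing examples, but a certain wishful thinking is also involved on our side.'' You correctly recognized this, and your write-up is consistent with the paper's own stance: verify residual finiteness for the known construction classes and acknowledge that the general case is open. Your verifications are sound. For the product-quotient surfaces, Theorem \ref{pi} does give a finite-index normal subgroup isomorphic to a product of surface groups, and since residual finiteness of a finite-index subgroup passes to the ambient group (via normal cores), $\pi_1(S)$ is residually finite in all cases of Table \ref{surfaces}; the paper itself draws exactly this kind of consequence from its structure theorem. The ball-quotient and bidisk cases follow from linearity and Mal'cev as you say, and for the Burniat, Keum--Naie and Inoue surfaces the normal subgroup $A$ in your extension $1 \to A \to \pi_1(S) \to F \to 1$ has finite index and is residually finite, so the same finite-index argument applies (this is cleaner than invoking general extension-closure, which can fail when the finite group sits as the kernel, as in Deligne's examples).

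The only caveat worth recording is the one you already state: none of this constitutes a proof of the conjecture, because a surface with $p_g = q = 0$ need not belong to any of the enumerated families, and no classification is available. The paper offers no argument for the general case, so there is no ``paper's proof'' against which to measure your program; what you have is a correct verification on the known locus of the moduli space plus a speculative outline, which is exactly the status the conjecture has in the paper.
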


\begin{rem}
There are algebraic surfaces with non residually finite fundamental 
groups, as shown by Toledo
in \cite{Toledo} (see also \cite{trento}), answering thus a question 
attributed to J.P. Serre.

Since in both types of examples one takes  general hyperplane 
sections of varieties of general type,
then necessarily we get surfaces with $p_g > 0$.  Evidence for the 
conjecture is provided by  the
  existing examples, but a certain wishful thinking is also involved 
on our side.

\end{rem}

One could here ask many more questions (compare \cite{milesLNM}),
but  as a beginning step it is important to start providing several examples.

  Here we give a  contribution
to the above
questions, determining the fundamental groups of the surfaces whose 
canonical models
are the quotient of a product of curves by the action of a
finite group.

This follows from the following complete
classification result:

\begin{theo}\label{classpgq=0}
All the surfaces
$X:= (C_1
\times C_2)/G$, where $G$ is a finite group with an unmixed action on a product
$C_1 \times C_2$ of smooth
projective curves $C_1, C_2$ of respective genera $g_1, g_2 \geq 2$ such that:
\begin{itemize}
\item[i)] $X$ has only {\em rational double points} as singularities,
\item[ii)] $p_g(S)=q(S)=0$
\end{itemize}
are obtained by a pair of appropriate epimorphisms of polygonal
groups $\BT_1, \BT_2$ to a
finite group $G$  as listed in table
\ref{surfaces}, for an appropriate choice of respective branch sets in $\PP^1$.
\end{theo}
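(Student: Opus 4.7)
The plan is to reduce the theorem to a finite combinatorial enumeration and then carry it out. First, since the action is unmixed and $q(S) = q(X) = 0$, the Albanese of $X$ has dimension $g(C_1/G) + g(C_2/G)$ (equivalently, $h^0(\Omega^1_{C_1\times C_2})^G = h^0(\Omega^1_{C_1})^G + h^0(\Omega^1_{C_2})^G$), so both quotient curves are rational: $g'_1 = g'_2 = 0$. By Riemann's existence theorem (in the form recalled just before Theorem \ref{pi}), each action is then encoded by a surjection $\varphi_i \colon \BT(0;m_1^{(i)},\ldots,m_{r_i}^{(i)}) \twoheadrightarrow G$ sending each standard generator $c_j$ to an element of order exactly $m_j^{(i)}$. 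The problem becomes one of classifying triples $(G,\varphi_1,\varphi_2)$ satisfying conditions (i) and (ii), up to the obvious equivalences (automorphisms of $G$, the swap of the two factors, and reorderings of the $c_j$).

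Second, translate (i)--(ii) into numerical constraints. The stabilizer of $(x_1,x_2) \in C_1\times C_2$ is $\mathrm{Stab}(x_1)\cap\mathrm{Stab}(x_2) \subset G$, hence cyclic (of some order $n$), acting on the tangent space by a pair of characters $(\zeta,\zeta^a)$ with $\gcd(a,n)=1$; the singularity is a rational double point (necessarily of type $A_{n-1}$) iff $a \equiv -1 \pmod n$. Translating back to the images of the generators $c_j$, this becomes an explicit compatibility condition on the pairs $(\varphi_1(c_j^{(1)}),\varphi_2(c_k^{(2)}))$ with a common fixed point. Since RDPs are crepant, $K_S^2 = K_X^2 = 8(g_1-1)(g_2-1)/|G|$, and Riemann--Hurwitz gives $K_S^2 = 2|G|\alpha_1\alpha_2$, where $\alpha_i := -2 + \sum_j (1 - 1/m_j^{(i)}) > 0$. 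Combining $\chi(S)=1$ with Noether's formula $12 = K_S^2 + e(S)$ and with the orbifold formula for $e(X)$ (together with $e(S) - e(X) = \sum_p (n_p-1)$ over the RDPs) yields a second linear identity relating $|G|$, the $\alpha_i$ and the multiset of singularity orders.

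Third, run the enumeration. Because $K_S^2 \leq 8$ (the case $K_S^2 = 9$ forces $S$ to be a ball quotient, which is rigid and not a product-quotient), one has $|G|\alpha_1\alpha_2 \leq 4$; since each $\alpha_i$ is a positive rational with bounded denominator coming from a hyperbolic signature, this leaves only finitely many admissible signature pairs -- essentially the same short list that already appeared in the smooth classification \cite{bcg} -- and for each of them $|G|$ is a priori bounded. For each admissible signature pair and each finite group $G$ of permissible order one enumerates, up to equivalence, all pairs of surjections $\varphi_i$ satisfying the order conditions on the $c_j$; for each such pair one then tests the RDP condition on every pair $(g_1,g_2)$ with a common fixed point and the numerical identity $\chi(X)=1$. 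The survivors are precisely the entries of Table \ref{surfaces}. The fundamental groups are then computed via the structure theorem reviewed in the introduction ($\pi_1(X) \cong \HHH(G;\varphi_1,\varphi_2)/\mathrm{Tors}(\HHH)$). As a byproduct one obtains the corollary that, in the singular case, every singularity is a node and their number equals $8 - K_S^2$.

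The main obstacle is taming the combinatorial explosion: even for modest $|G|$ the number of epimorphic pairs of polygonal groups is enormous, and the search must be organized around very sharp a priori bounds on the signatures, the group orders, and the admissible stabilizer configurations, so that the final exhaustive check (inevitably carried out on a computer algebra system such as GAP or MAGMA) is both rigorous and small enough to be reproducible by hand.
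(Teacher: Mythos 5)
Your overall strategy coincides with the paper's: reduce to pairs of appropriate epimorphisms from polygonal groups via Riemann's existence theorem, extract numerical constraints from $\chi=1$ and the RDP condition, and finish by a computer-assisted enumeration. However, two steps that you assert are exactly the points where the real work lies, and as written they are gaps. First, the finiteness (and smallness) of the list of admissible signatures does not follow from $|G|\,\Theta_1\Theta_2\le 4$ together with "bounded denominator": for a fixed $K^2$ the condition is that $\alpha_i=K^2/(4\Theta_i)$ be a positive \emph{integer}, and signatures such as $(2,3,m)$ have $\Theta\to 1/6$ with $m\to\infty$, so boundedness of $\Theta$ alone does not bound the $m_j$. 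The paper obtains the bound $m_j\le 3(10-t)$ (lemma \ref{boundrm}) only after proving the divisibility constraints "each $m_j$ divides $2\alpha$, and at most $t/2$ of them fail to divide $\alpha$", which in turn rest on the fact that $K_X$ is Cartier (so $W_j\cdot K_X\in\ZZ$) and on the integrality of $\tilde F^2$ from lemma \ref{ftildesquare}. You never derive anything of this kind, and your claim that the resulting list is "essentially the same short list as in the smooth classification" is false: table \ref{typespr} depends on $K^2$ and is much larger than the $K^2=8$ list of \cite{bcg}.

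Second, you postpone to a "byproduct" the statement that all singularities are nodes and that their number is $8-K^2$, proposing instead to enumerate over arbitrary configurations of $A_{n-1}$ points. But then your enumeration is not a priori finite: without an upper bound on the types $A_{n-1}$ that can occur, the Noether identity $12=K^2+e(S)$ involves the unknown lengths $l_i$ and the orbit sizes $|p^{-1}(p_i)|$, and you cannot even fix $K^2$ (hence $|G|$ and the signatures) before knowing the singularity configuration. The paper breaks this circularity upfront: lemma \ref{ftildesquare} forces $\sum_p 1/(n(p)+1)\in\NN$ along each singular fibre, which combined with Polizzi's bound leaves only "$t\in\{0,2,4,6\}$ nodes" or the single exceptional configuration $A_1+2A_3$ (corollary \ref{onlynodesoracase}), and only then does Noether give $K^2=8-t$. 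You need some substitute for this step. Finally, the cases where $|G|\in\{1024,1152\}$ or $|G|>2000$ fall outside the MAGMA databases and must be excluded by separate ad hoc arguments (perfectness for quotients of $\BT(0;2,3,7)$, abelianization bounds for $\BT(0;2,3,8)$); "reproducible by hand" does not cover this.
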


     Let us briefly illustrate the strategy of proof for the above theorem.
The first step is to
show that, under the above hypotheses, the singularities of
$X$ are either $0,2,4$ or $6$ nodes ($A_1$-singularities). And,
according to the number of
nodes,
$K^2_X = K^2_S
\in
\{8,6,4,2\}$. Since all surfaces isogenous to a product (i.e., with
$0$ nodes) and
with $p_g(S)=q(S)=0$ have
been completely classified in \cite{bcg}, we restrict ourselves
here to the case of
$2$,
$4$, resp. $6$ nodes.

Note furthermore that the assumption $q=0$ implies that $C_i/G$ is
rational, i.e., $g'_1
= g'_2 = 0$.

We know that $X$ determines the following data
\begin{itemize}
     \item a finite group $G$,
\item two polygonal groups $\BT_1:=\BT(0;m_1,\ldots, m_r)$,
$\BT_2:=\BT(0;n_1,\ldots, n_s)$, of respective signatures  $T_1 =
(m_1, \ldots , m_r)$,
$T_2 = (n_1,
\ldots , n_s)$
\item two surjective homomorphisms (preserving the order of the
generators) $\varphi_i \colon
\BT_i
\rightarrow G$, such that the stabilizers fulfill certain conditions
ensuring that $X$
has only $2,4$ or
$6$ nodes respectively.
\end{itemize}

Our second main result,  summarized in table
\ref{surfaces}, contains
moreover the description of the first homology group $ H_1 (S, \ZZ)$
(the abelianization
of
$\pi_1(S)$), and an indication of the number of irreducible families
that we construct in
each case.

Note that, in order to save space in table \ref{surfaces}, we
have used a particular notation for the signatures: for example in the
first row $4^3$ stands for $(4,4,4)$. A more precise explanation of
the table is to be found in subsection \ref{explainthetable}, including the
definition of all listed groups in remark \ref{---}. More details on
the constructed surfaces are in section \ref{thedescription}.

\begin{table}[ht]
\caption{The surfaces}
\label{surfaces}
\begin{tabular}{|c|c|c|c|c|c|c|c|c|}
\hline
$K^2$& $T_1$  &  $T_2$&$g_1$&$g_2$&G&\# fams& $H_1$& $\pi_1(S)$\\
\hline\hline
2&2, 3, 7 &
$4^3$&3&22&PSL(2,7)&2&$\ZZ_2^2$&$\ZZ_2^2$\\
2&2, 4, 5 &2,
$6^2$&4&11&$\mathfrak{S}_5$&1&$\ZZ_3$&$\ZZ_3$\\
2&2, $5^2$&$2^3$,
3&4&6&$\mathfrak{A}_5$&1&$\ZZ_5$&$\ZZ_5$\\
2&2, 4, 6 &$2^3$,
4&3&7&$\mathfrak{S}_4 \times \ZZ_2$&1&$\ZZ_2^2$&$\ZZ_2^2$\\
2&2, $6^2$&$2^3$,3&4&4&$\mathfrak{S}_3 \times
\mathfrak{S}_3$&1&$\ZZ_3$&$\ZZ_3$\\ 2&   $4^3$&
$4^3$&3&3&$\ZZ_4^2$&1&$\ZZ_2^3$&$\ZZ_2^3$\\ 2&$2^3$, 4&$2^3$,
4&3&3&$D_4 \times \ZZ_2$&1&$\ZZ_2 \times \ZZ_4$&$\ZZ_2 \times \ZZ_4$\\
\hline 4&2, 4, 5&    3,
$6^2$&4&21&$\mathfrak{S}_5$&1&$\ZZ_3^2$&$\ZZ^2
\rtimes \ZZ_3$\\ 4&2,
$5^2$&$2^2$, $3^2$&4&11&$\mathfrak{A}_5$&1&$\ZZ_{15}$&$\ZZ_{15}$\\
4&2, 4, 6&$2^2$,
$4^2$&3&13&$\mathfrak{S}_4 \times \ZZ_2$&1&$\ZZ_2^2 \times
\ZZ_4$&$\ZZ^2 \rtimes \ZZ_4$\\
4&2,
4, 6&       $2^5$&3&13&$\mathfrak{S}_4 \times 
\ZZ_2$&1&$\ZZ_2^3$&$\ZZ^2 \rtimes \ZZ_2$\\
4&$2^3$, 4&    $2^3$, 4&5& 5&$\ZZ_2^4 \rtimes
\ZZ_2$&1&$\ZZ_4^2$&$G(32,2)$\\ 4&3,
$4^2$&       $2^5$&3& 7&$\mathfrak{S}_4$&1&$\ZZ_2^2 \times
\ZZ_4$&$\ZZ^2 \rtimes \ZZ_4$\\
4&3, $6^2$&$2^2$, $3^2$&4& 4&$\mathfrak{S}_3 \times \ZZ_3$&1&$\ZZ_3^2$&$\ZZ^2
\rtimes \ZZ_3$\\
4&$2^2$, $3^2$&$2^2$,
$3^2$&4& 4&$\ZZ_3^2
\rtimes \ZZ_2$&1&$\ZZ_3^3$&$\ZZ_3^3$\\
4&$2^3$, 4&       $2^5$&3&
5&$D_4 \times \ZZ_2$&1&$\ZZ_2^2 \times \ZZ_4$&$\ZZ^2 \hookrightarrow \pi_1
\twoheadrightarrow D_4$\\
4&$2^2$,
$4^2$&$2^2$, $4^2$&3& 3&$\ZZ_4
\times \ZZ_2$&1&$\ZZ_2^3 \times \ZZ_4$&$\ZZ^4 \hookrightarrow \pi_1
\twoheadrightarrow \ZZ_2^2$                   \\
4&$2^5$&
$2^5$&3&
3&$\ZZ_2^3$&1&$\ZZ_2^3 \times \ZZ_4$&$\ZZ^4 \hookrightarrow \pi_1
\twoheadrightarrow \ZZ_2^2$\\
\hline 6&2, $5^2$&$3^2$, 4&19&16&$\mathfrak{A}_6$           &
2&$\ZZ_{15}$&$\mathfrak{A}_4 \times \ZZ_5$\\
6&2, 4, 6 &2, 4, 10&11&19&$\mathfrak{S}_5 \times \ZZ_2$&1&$\ZZ_2 \times
\ZZ_4$&$\mathfrak{S}_3 \times D_{4,5,-1}$\\
6&2,
$7^2$&$3^2$, 4&19& 8&PSL(2,7)                   &
2&$\ZZ_{21}$&$\mathfrak{A}_4 \times \ZZ_7$\\
6&2, $5^2$&2, $3^3$&
4&16&$\mathfrak{A}_5$           &
1&$\ZZ_3 \times \ZZ_{15}$&$\ZZ^2 \rtimes \ZZ_{15}$\\
6&2, 4, 6 &$2^4$, 4&
3&19&$\mathfrak{S}_4 \times \ZZ_2$&  1&$\ZZ_2^3 \times \ZZ_4$&$\Pi_2 
\hookrightarrow \pi_1
\twoheadrightarrow \ZZ_2 \times \ZZ_4$ \\
6&$2^3$, 4&$2^4$, 4& 3&
7&$D_4 \times \ZZ_2$           &  1&$\ZZ_2^2 \times \ZZ_4^2$&$\ZZ^2
\times \Pi_2 \hookrightarrow \pi_1
\twoheadrightarrow \ZZ_2^2$\\
\hline
\end{tabular}
\end{table}

\medskip Let's spend here some  words in order to give
an idea of the methods used for the proof.

  First of all we
use the combinatorial restriction imposed by the further assumption
$p_g(S) = 0$, and the conditions on the singularities of $X$.

This allows, for each possible value of $K^2 : = K^2_S$, to restrict 
to a finite
list of possible signatures $T_1$, $T_2$ of the respective polygonal groups.

A finite but rather big list is provided by lemma \ref{boundrm}; moreover  a
MAGMA (\cite{magmaref}) script allows us to shorten the list to at most $24$
signatures for each value of $K^2$.

The order of $G$ is now determined by $T_1$, $T_2$ and by
  $K^2$: it follows that there are only finitely many groups to
  consider.

A second MAGMA script computes, for each
$K^2$, all possible triples $(T_1,T_2,G)$, where $G$ is a quotient of both
polygonal groups (of respective signatures $T_1,T_2$) and has the right
  order.
Note that our code skips a few pairs of signatures
giving rise to groups of large order, either not covered by the MAGMA
SmallGroup database, or causing extreme computational complexity.
These cases left out by our program are then excluded via a case by
  case argument.

% 1024, or bigger than 2000, which are not
%covered by the MAGMA SmallGroup database of finite groups.
%The code skips also the case $|G|=1152$, since there are more than
%$10^6$ groups of this
%order and this causes extreme computational complexity.  All the
%cases left out by our
%program are then excluded via case by case computer calculation.

For each of the triples $(T_1,T_2,G)$ which we have then found
there are the corresponding pairs of surjections each giving a family
  of surfaces of
the form $(C_1 \times C_2)/G$.  Some of these surfaces have
singularities which   violate the hypotheses of theorem \ref{classpgq=0}.

A third MAGMA script produces the final list of
surfaces, discarding the ones which are too singular.

A last script calculates, using prop. \ref{fundg}, the
fundamental groups.

As we already remarked, in the case $K^2_S = 2$  (of the so called 
{\em numerical Campedelli surfaces})
all fundamental groups that we get are finite; but for $K^2_S = 4, 6$ 
both cases
occur:  finite and  infinite fundamental groups.

The case of infinite
fundamental groups is the one where the structure theorem proven in 
the first part of
the paper turns out to be extremely helpful to give an explicit 
description of these groups
(since in general a presentation of a group does not say much about it).

\section{Notation}

In this section we collect the notation we use throughout the paper.

$G$: a finite group.

$C_i $: a smooth compact (connected) curve  of genus $g_i \geq 2$.

Given  natural numbers $g \geq 0; \ m_1, \ldots , m_r \geq 1$ the {\em
   orbifold surface group}  of {\em signature} $(g; m_1, \ldots , m_r)$
is defined as follows:
\begin{align*}
\BT(g;m_1,\ldots, m_r):=\langle &a_1,b_1,\ldots ,a_g,b_g, c_1,\ldots,
c_r|\\
&c_1^{m_1},\ldots ,c_r^{m_r},\prod_{i=1}^g [a_i,b_i]
\cdot c_1\cdot
\ldots \cdot c_r\rangle.
\end{align*}

For $g=0$ we get the  {\em polygonal group}
\begin{equation}\label{polygroup}
\BT(0;m_1,\ldots, m_r)=\langle c_1,\ldots, c_r\
     | \ c_1^{m_1},\ldots ,c_r^{m_r},c_1\cdot \ldots \cdot c_r\rangle .
\end{equation}

For $r=0$ we get the {\em surface group} of genus $g$
\begin{equation*}
\Pi_g:=\langle a_1,b_1,\ldots , a_g,b_g\
     | \prod_{i=1}^g [a_i,b_i] \rangle .
\end{equation*}

An {\em appropriate orbifold homomorphism} $\varphi \colon
\BT(g';m_1,\ldots, m_r) \rightarrow
G$ is a surjective homomorphism such that
$ \ga_i : = \fie (c_i)$ has order {\em exactly} $m_i$.

Given two surjective homomorphisms
$$
\varphi_1 \colon \BT_1 \rightarrow G, \
\varphi_2 \colon \BT_2  \rightarrow G.
$$

\noindent

we define the fibre product $\HH $ as
\begin{equation}\label{fibreprod} \HH : = {\HH}(G;\varphi_1,\varphi_2):= \{ \,
(x,y)\in \BT_1\times \BT_2\
|\
\varphi_1(x)=\varphi_2(y)\,\}.
\end{equation}

\section{Finite group actions on products of curves}

The following
facts will be frequently used without explicit mention in the
subsequent chapters.

The following is a reformulation of {\em Riemann's existence theorem}:

\begin{theo} A finite group $G$ acts as a group of automorphisms on a
compact Riemann
surface $C$ of genus $g$ if and only if there are natural numbers
$g', m_1, \ldots ,
m_r$, and an appropriate  orbifold homomorphism
$$\varphi \colon
\BT(g';m_1,\ldots, m_r)
\rightarrow G$$  such that the Riemann - Hurwitz relation holds:
$$ 2g - 2 = |G|\left(2g'-2 + \sum_{i=1}^r \left(1 -
\frac{1}{m_i}\right)\right).
$$

\end{theo}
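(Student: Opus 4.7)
The plan is to prove both directions by passing between the $G$-action on $C$ and covering-theoretic data on the quotient $C' := C/G$, using standard covering space theory together with the classical fact that topologically defined coverings of Riemann surfaces inherit a unique compatible complex structure.

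For the necessity direction, I would start from a faithful $G$-action on $C$ and form the quotient map $\pi \colon C \to C' := C/G$, which is a ramified Galois cover of degree $|G|$. Let $\{p_1,\ldots,p_r\} \subset C'$ be the branch locus, and let $m_i$ be the common order of the stabilizer of any point in $\pi^{-1}(p_i)$ (well-defined since stabilizers of points in a single $G$-orbit are conjugate). The restricted unramified cover $C \setminus \pi^{-1}(\{p_i\}) \to C' \setminus \{p_1,\ldots,p_r\}$ is classified by a surjective monodromy homomorphism from $\pi_1(C' \setminus \{p_1,\ldots,p_r\})$ onto $G$. A standard small loop $c_i$ around $p_i$ is sent to an element whose order is exactly the local ramification index $m_i$, so the homomorphism factors through the orbifold surface group $\BT(g';m_1,\ldots,m_r)$ and yields an appropriate orbifold homomorphism $\varphi$. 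The Riemann--Hurwitz formula applied to $\pi$ gives the stated genus relation.

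For the sufficiency direction, given $\varphi \colon \BT(g';m_1,\ldots,m_r) \twoheadrightarrow G$ satisfying the numerical constraint, I would choose any compact Riemann surface $C'$ of genus $g'$ and any $r$ distinct marked points $p_1,\ldots,p_r \in C'$. The fundamental group of $C' \setminus \{p_1,\ldots,p_r\}$ has the standard presentation obtained from that of $\BT(g';m_1,\ldots,m_r)$ by omitting the torsion relations, hence surjects onto $\BT(g';m_1,\ldots,m_r)$, and composing with $\varphi$ yields a surjection onto $G$ whose kernel determines a connected unramified Galois cover $C^\circ \to C' \setminus \{p_1,\ldots,p_r\}$ with deck group $G$. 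Pulling back the complex structure gives $C^\circ$ the structure of a (noncompact) Riemann surface with a holomorphic free $G$-action.

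The main technical step, and the only real obstacle, is the compactification: I must extend $C^\circ \to C' \setminus \{p_1,\ldots,p_r\}$ to a ramified cover $C \to C'$ of compact Riemann surfaces in such a way that $G$ extends to a group of biholomorphisms. Here the ``appropriate'' hypothesis that $\varphi(c_i)$ has order exactly $m_i$ is decisive. Restricting to a small punctured disk $\Delta^*$ around $p_i$, whose fundamental group is generated by $c_i$, the preimage in $C^\circ$ splits into $|G|/m_i$ connected components, each isomorphic to the standard cyclic cover $\Delta^* \to \Delta^*$, $z \mapsto z^{m_i}$. Each such component admits a unique extension by filling in a single point, giving a compact Riemann surface $C$ with a holomorphic $G$-action whose quotient is $C'$. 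Faithfulness of the $G$-action follows from the surjectivity of $\varphi$, and the genus of $C$ is forced by Riemann--Hurwitz applied to the resulting $\pi \colon C \to C'$, since the contribution over $p_i$ is $(|G|/m_i)(m_i-1) = |G|(1 - 1/m_i)$, reproducing the stated formula.
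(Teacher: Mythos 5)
Your proof is correct and is the standard argument: quotient-and-monodromy for necessity, and covering-space construction plus local compactification via the model $z \mapsto z^{m_i}$ for sufficiency, with the ``appropriate'' condition on the orders of $\varphi(c_i)$ doing exactly the work you assign to it. The paper itself offers no proof, stating the result as a reformulation of the classical Riemann existence theorem, so there is nothing to compare beyond noting that your write-up supplies precisely the argument being invoked.
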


If this is the case, then $g'$ is the genus of $C':=C/G$. The
$G$-cover $C \rightarrow
C'$ is branched in $r$ points $p_1, \ldots , p_r$ with branching
indices $m_1, \ldots ,
m_r$, respectively.

\medskip\noindent Moreover, if we denote by $\ga_i \in G$ the image
of $c_i$ under
$\varphi$, then
$$
\Sigma(\ga_1, \ldots , \ga_r) := \cup_{a \in G} \cup_{i=0}^{\infty}
\{a\ga_1^ia^{-1}, \ldots a\ga_r^ia^{-1} \},
$$ is the set of stabilizers for the action of $G$ on $C$.

\medskip Assume now that there are two homomorphisms
$$
\varphi_1 \colon \BT(g'_1;m_1,\ldots, m_r) \rightarrow G,
$$
$$
\varphi_2 \colon \BT(g'_2;n_1,\ldots, n_s) \rightarrow G,
$$  determined by two Galois covers $\lambda_i \colon C_i \rightarrow
C'_i$, $i = 1, 2$.

\noindent We will assume in the following that $g(C_1), \ g(C_2) \geq
2$, and we consider
the diagonal action of $G$ on $C_1 \times C_2$.

If $G$ acts freely on $C_1 \times C_2$, then $S:=(C_1 \times C_2) /
G$ is smooth and is
said to be {\em isogenous to a product}. These surfaces were
introduced and extensively
studied by the second author in \cite{FabIso}, where the following
crucial {\em weak
rigidity} of surfaces isogenous to a product was proved (see also
\cite{modreal}).
\begin{theo}
     Let $S=C_1 \times C_2 /G$ be a surface isogenous to a product,
$g(C_1)$,  $g(C_2) \geq 2$.
Then every surface with the same
\begin{itemize}
\item[-]  topological Euler characteristic and
\item[-]  fundamental group
\end{itemize}
     is diffeomorphic to $S$. The corresponding  moduli space
$\mathcal{M}_S^{top} = \mathcal{M}_S^{diff}$ of surfaces (orientedly)
homeomorphic (resp.
diffeomorphic) to $S$ is either irreducible and connected or consists
of two irreducible
connected components exchanged by complex conjugation.
\end{theo}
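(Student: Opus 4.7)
The plan is to reconstruct, from $\pi_1(S)$ together with $e(S)$, the entire combinatorial type of the étale $G$-cover $C_1\times C_2\to S$, and then to parametrize all its holomorphic realizations by an irreducible family using Nielsen realization and Teichm\"uller theory.

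\emph{Step 1 (intrinsic recovery of the cover).} The proposition stated just above exhibits a finite-index normal subgroup $N\cong \Pi_{g_1}\times \Pi_{g_2}\leq \pi_1(S)$ with quotient $G$. I would characterize $N$ group-theoretically as (up to conjugation) a normal subgroup of finite index that is a direct product of two non-abelian surface groups of genus $\geq 2$. The two factors are intrinsic by a Krull--Schmidt-type uniqueness: the $\Pi_{g_i}$ are centreless and directly indecomposable, so any two such decompositions of $N$ are conjugate inside $\pi_1(S)$. From this one reads off $G=\pi_1(S)/N$, the genera $g_i$ (pinned down by the identity $e(S)=4(g_1-1)(g_2-1)/|G|$), and the two induced monodromies $\psi_i\colon G\to \mathrm{Out}(\Pi_{g_i})=\mathrm{Map}_{g_i}$.

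\emph{Step 2 (holomorphic realization, then Nielsen realization).} For any other smooth projective surface $S'$ with the same $\pi_1$ and $e$, the subgroup $N$ determines an \'etale cover $\widetilde{S'}\to S'$ with $\pi_1(\widetilde{S'})=\Pi_{g_1}\times \Pi_{g_2}$ and the correct Euler number. I would invoke the classical theorem from \cite{FabIso} that such a compact K\"ahler surface must be biholomorphic to a product of curves $C'_1\times C'_2$, via factorization of harmonic maps through the two projections of the universal cover $\mathbb H\times \mathbb H$. The deck group $G$ then acts holomorphically and unmixedly on $C'_1\times C'_2$ with the same $\psi_i$. Applying Kerckhoff's solution to the Nielsen realization problem to each $\psi_i(G)\subset \mathrm{Map}_{g_i}$ shows that the $G$-invariant locus of $\mathcal{T}_{g_i}$ is non-empty and coincides with the Teichm\"uller space of the orbifold $C_i/G$, hence is a contractible cell.

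\emph{Step 3 (moduli and dichotomy).} The product of the two orbifold Teichm\"uller spaces parametrizes a smooth irreducible family of pairs $(C'_1,C'_2)$ with the prescribed $G$-action; by Ehresmann's fibration theorem all the resulting surfaces $S'$ are mutually diffeomorphic, which proves the weak rigidity assertion. The corresponding subspace of the moduli space is the quotient of this parameter space by the subgroup of $\mathrm{Map}_{g_1}\times \mathrm{Map}_{g_2}$ normalizing the $G$-action, and is therefore irreducible. Complex conjugation preserves orientation in real dimension four, but it may send the complex structure of $S$ to a point of the moduli space lying in a \emph{distinct} irreducible component while remaining oriented-diffeomorphic to $S$; this accounts precisely for the dichotomy in the statement of $\mathcal{M}_S^{top}=\mathcal{M}_S^{diff}$.

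The main obstacle I anticipate is Step 1: ensuring that the decomposition $N=\Pi_{g_1}\times \Pi_{g_2}$ is canonically attached to $\pi_1(S)$, so that neither a different conjugate copy of $N$ nor a different direct product decomposition yields different data $(G,\psi_1,\psi_2)$. A secondary delicate point is verifying the factorization theorem in Step 2, which rests on non-trivial K\"ahler geometry (harmonic maps and Siu-type rigidity) rather than on elementary combinatorial-group arguments.
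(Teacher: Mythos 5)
Your proposal is essentially the argument of Catanese's original proof in \cite{FabIso} (and \cite{modreal}); the paper under review does not reprove this theorem but simply recalls it from those references, so there is nothing to compare beyond noting that your reconstruction --- recovering the product subgroup $N$ and the extension data from $\pi_1$, applying the isotropic-subspace/Siu--Beauville factorization to the \'etale cover with Euler number $4(g_1-1)(g_2-1)$, and parametrizing by the contractible product of $G$-equivariant Teichm\"uller spaces --- is the right one. The only point worth noting is that the worry you raise at the end is harmless for the diffeomorphism claim: you need not show that $N$ is the unique finite-index normal subgroup isomorphic to a product of surface groups, since fixing any one such $N$ inside the abstract group $\pi_1$ and using it simultaneously for $S$ and $S'$ already yields the comparison, while the uniqueness of the two direct factors of $N$ itself does follow from your Remak-type argument for centreless indecomposable groups.
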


In particular, any flat deformation of a surface isogenous to a
product is again
isogenous to a product.
Observe that this property does not hold any longer if
the action is not free.

Moreover, the fundamental group of $S=C_1 \times C_2 /G$ sits inside
an exact sequence

$$ 1 \rightarrow \pi_1(C_1) \times \pi_1(C_2) \rightarrow \pi_1 (X)
\rightarrow G
\rightarrow 1.
$$ This extension is determined by the associated maps to the
Teichm\"uller modular
groups.

\begin{rem} In \cite{bacat} and \cite{bcg} a complete classification 
of surfaces $S$ isogenous to a product with $p_g(S) = q(S) = 0$ is 
given.
\end{rem}

In this paper we drop the condition that the action of $G$ on $C_1 \times C_2$
is free and we are mainly interested in the following two questions:
\begin{itemize}
     \item[-] what is the fundamental group of $X:=(C_1 \times C_2)/G$ ?
\item[-] is it still possible to classify these quotients under suitable
restrictions on the
invariants of a minimal resolution of the singularities of $X$?
\end{itemize}

\begin{rem}\label{vanKampen} If the diagonal action of $G$ on $C_1
\times C_2$ is not
free, then
$G$ has a finite set of fixed points. The quotient surface $X:= (C_1
\times C_2) / G$ has
a finite number of (finite) cyclic quotient singularities, which are rational
singularities.

Since, as we will shortly recall, the minimal resolution  $S
\rightarrow X$ of the
singularities of $X$ replaces each singular point by a tree of smooth
rational curves, we have, by van Kampen's theorem, that
$\pi_1(X) = \pi_1(S)$.
\end{rem}

     Note that by a result of A. Fujiki (cf. \cite{fujiki}), the exceptional
divisors of a resolution of the singularities $\tilde{X}$ of $(C_1 \times
\ldots \times C_n)
/G$ consists, in the case where the singularities are isolated
points, of a union of
irreducible rational varieties intersecting with simple normal
crossings. Therefore
$$\pi_1 ((C_1 \times \ldots \times C_n) /G) \cong \pi_1(\tilde{X}).
$$
\begin{rem}
1) Assume that $x \in X$ is a singular point. Then it is a cyclic
quotient singularity
      of type $\frac{1}{n}(1,a)$ with $g.c.d(a,n) = 1$,
       i.e., $X$ is locally around $x$ the quotient of $\mathbb{C}^2$ by
       the action of a diagonal linear automorphism with eigenvalues
       $\exp(\frac{2\pi i}{n})$, $\exp(\frac{2\pi i a}{n})$.
This follows since the  tangent representation is faithful on both factors.

The particular case where $a=-1$, i.e., the stabilizer has a tangent
representation
with determinant $=1$, is precisely the case where the singularity is
a RDP (Rational
Double Point) of type $A_{n-1}$.

2) We denote by $K_X$ the canonical (Weil) divisor on the normal surface
corresponding to $i_* ( \Omega^2_{X^0})$, $ i\colon X^0 \ra X$ being the
inclusion of the
smooth locus of $X$. According to Mumford we have an intersection
product with values
in $\QQ$ for Weil divisors on a normal surface, and in particular we consider
the selfintersection  of the canonical divisor,
\begin{equation}\label{K2}
K_X^2 =
\frac{8 (g(C_1) - 1) (g(C_2) - 1)}{|G|}
\in
       \mathbb{Q},
\end{equation}
  which is not necessarily an integer.

$K_X^2$ is however an integer (equal indeed to $K_S^2$) if $X$
has only RDP's  as singularities (cf. \cite{reid}).
\end{rem}

The resolution of a cyclic quotient singularity of type $\frac{1}{n}(1,a)$ with
$g.c.d(a,n) = 1$ is well known. These singularities are resolved by
the so-called {\em
Hirzebruch-Jung strings}. More precisely, let $\pi \colon S \rightarrow X$
be a minimal
resolution of the singularities  and let $E = \bigcup_{i=1}^m E_i =
\pi^{-1}(x)$. Then $E_i$
is a smooth rational curve with $E_i^2 = -b_i$ and $E_i\cdot E_{i+1}
= 1$ for $i \in\{1,
\ldots , m-1\}$ and zero otherwise. The $b_i$'s are given by the formula
$$
\frac{n}{a} = b_1 - \frac{1}{b_2 - \frac{1}{b_3 - \ldots}}.
$$

Moreover, we have (around $x$)
$$ K_S = \pi^* K_X + \sum_{i=1}^m a_i E_i,
$$  where the rational numbers $a_i$ are determined by the conditions
     $$(K_S + E_j)E_j =
-2, \ \ \
(K_S -
\sum_{i=1}^m a_iE_i)E_j = 0, \ \
\forall j= 1, \dots ,m.$$

The {\em index} $r$ of the singularity $x$ is now given by

$$r = \min
\{\lambda \in \mathbb{N} | \lambda a_i \in \mathbb{Z}, \  \forall i=
1, \dots ,m\}.$$

\medskip\noindent Observe that the above formulae allow to calculate
the self intersection
number of the canonical divisor $K_S$ of a minimal resolution of the
singularities of
$X$.

\noindent The next result gives instead  a formula for the topological Euler
characteristic $e(S)$ of a minimal resolution $S$ of singularities of $X$.
\begin{prop}\label{euler} Assume that $\{p_1,
\ldots p_k\}$ are the cyclic quotient singularities of $X = C_1 \times C_2/G$,
of respective types $\frac{1}{n_i}(1,a_i)$. Let $p \colon C_1 \times
C_2 \ra X$ be the quotient morphism and let $S$ be a minimal 
resolution of the singularities of $X$.
Denote by $l_i$
the length of the resolution tree of the singularity $p_i$. Then
$$ e(S) = \frac{K_X^2}{2} - \frac{|p^{-1}(\{p_1, \ldots p_k\})|}{|G|}
+ \sum_{i=1}^k (l_i
+ 1).
$$
\end{prop}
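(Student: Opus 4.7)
The plan is to compute $e(S)$ by stratifying both $C_1 \times C_2$ and $S$ according to the singular locus of $X$, exploiting additivity of the topological Euler characteristic together with multiplicativity on étale covers.

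First I would compute $e(C_1 \times C_2) = e(C_1) e(C_2) = 4(g_1-1)(g_2-1)$, and note via equation \eqref{K2} that
$$\frac{e(C_1 \times C_2)}{|G|} = \frac{8(g_1-1)(g_2-1)}{2|G|} = \frac{K_X^2}{2}.$$
Next let $F := p^{-1}(\{p_1,\ldots,p_k\})$ and set $U := X \setminus \{p_1,\ldots,p_k\}$, $V := (C_1 \times C_2) \setminus F$. Since $G$ acts freely away from the preimages of the singular points of $X$ (the singularities of $X$ are precisely the images of the points with nontrivial stabilizer), the induced map $p|_V \colon V \to U$ is an étale Galois $|G|$-covering. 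By additivity and multiplicativity of $e$:
$$e(U) = \frac{e(V)}{|G|} = \frac{e(C_1 \times C_2) - |F|}{|G|} = \frac{K_X^2}{2} - \frac{|F|}{|G|}.$$

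Second, I would compute the Euler characteristic of each exceptional fibre. By the description of Hirzebruch--Jung strings recalled just before the proposition, $\pi^{-1}(p_i)$ is a chain of $l_i$ smooth rational curves meeting transversally, with $l_i - 1$ nodes. Inclusion--exclusion gives
$$e\bigl(\pi^{-1}(p_i)\bigr) = 2 l_i - (l_i - 1) = l_i + 1.$$

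Finally I assemble: since $\pi \colon S \to X$ restricts to an isomorphism over $U$, additivity of $e$ applied to the stratification $S = \pi^{-1}(U) \sqcup \bigsqcup_i \pi^{-1}(p_i)$ yields
$$e(S) = e(U) + \sum_{i=1}^k (l_i + 1) = \frac{K_X^2}{2} - \frac{|F|}{|G|} + \sum_{i=1}^k (l_i + 1),$$
which is the claimed identity, since $|F| = |p^{-1}(\{p_1,\ldots,p_k\})|$.

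There is no real obstacle; the argument is essentially bookkeeping. The only point that deserves care is the justification of the additive/multiplicative identities for $e$ on the (possibly singular, but compact) spaces involved — this is standard once one works with compactly supported Euler characteristic, which agrees with the ordinary one here since everything is compact. One should also check that the $G$-action is indeed free on $V$, i.e.\ that every stabilizer point of $G$ on $C_1 \times C_2$ maps to a singular point of $X$; this is immediate because the tangent representation at a stabilized point is faithful on each factor, so the image is a genuine cyclic quotient singularity (as already recorded in the remark preceding the proposition).
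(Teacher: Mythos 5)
Your argument is correct and coincides with the paper's own proof: the same stratification of $S$ into $\pi^{-1}(U)$ and the exceptional chains, the same étale Galois cover computation giving $e(U)=\frac{K_X^2}{2}-\frac{|F|}{|G|}$, and the same inclusion--exclusion count $e(\pi^{-1}(p_i))=2l_i-(l_i-1)=l_i+1$. Nothing is missing.
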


\Proof Let $X^* := X \setminus \{p_1, \ldots , p_k \}$. Then,
using the additivity of the Euler number for a stratification given
by orientable
manifolds (see e.g. \cite{FabIso}), we  obtain
$$e(S) = e(X^*) +
\sum_{i=1}^k (2l_i - (l_i -1)) = e(X^*) + \sum_{i=1}^k (l_i + 1).$$
Let $Z^* := (C_1
\times C_2) \setminus p^{-1}(\{p_1, \ldots p_k\})$: then $p|Z^* \colon
Z^* \rightarrow X^*$ is
an \'etale Galois covering with group $G$. Therefore:
\begin{multline*} e(X^*) = \frac{e(Z^*)}{|G|} =
\frac{e(C_1 \times C_2) -|p^{-1}(\{p_1, \ldots p_k\})|}{|G|} = \\
\frac{4(g(C_1)-1)(g(C_2) -1) -|p^{-1}(\{p_1,
\ldots p_k\})|}{|G|} = \frac{K_X^2}{2}  - \frac{|p^{-1}(\{p_1, \ldots
p_k\})|}{|G|}.
\end{multline*}

\qed

An immediate consequence of the previous proposition is the following:
\begin{cor}\label{eulernodes}
     Assume that the singular points $p_1,
\ldots p_k$
of $X = (C_1 \times C_2)/G$ are ordinary double points
(i.e., $A_1$ singularities).
     Let $S$ be a minimal resolution of the singularities of $X$. Then
$$ e(S) = \frac{K_X^2}{2} + \frac{3}{2} k.
$$
\end{cor}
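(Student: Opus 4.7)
The plan is to specialize the formula from Proposition \ref{euler} to the case of nodes, by computing the two inputs that depend on the singularity type: the resolution length $l_i$ and the size of the preimage $p^{-1}(p_i)$.

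First, recall that an $A_1$ singularity is the cyclic quotient singularity $\frac{1}{2}(1,1)$, and its minimal resolution has a single exceptional $(-2)$-curve. Hence the length of the resolution tree is $l_i = 1$ for every $i$, and the sum on the right-hand side of Proposition \ref{euler} becomes
$$\sum_{i=1}^k (l_i + 1) = 2k.$$

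Second, I would compute the term $|p^{-1}(\{p_1,\ldots,p_k\})|/|G|$. Since each $p_i$ is an $A_1$ point, its stabilizer in $G$ under the diagonal action on $C_1\times C_2$ has order exactly $2$ (the nontrivial element acts with eigenvalues $-1,-1$, i.e.\ with determinant $1$, which is the condition identifying the singularity as a rational double point of type $A_1$). Therefore each $G$-orbit in $p^{-1}(p_i)$ has cardinality $|G|/2$, giving
$$\frac{|p^{-1}(\{p_1,\ldots,p_k\})|}{|G|} = \sum_{i=1}^k \frac{|G|/2}{|G|} = \frac{k}{2}.$$

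Substituting both quantities into the formula from Proposition \ref{euler} yields
$$e(S) = \frac{K_X^2}{2} - \frac{k}{2} + 2k = \frac{K_X^2}{2} + \frac{3}{2}k,$$
as desired. There is no real obstacle here: the statement is a routine specialization of the previous proposition, once one uses the two elementary facts that the $A_1$-resolution is a single smooth rational curve and that the corresponding stabilizer subgroup has order $2$.
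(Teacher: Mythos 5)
Your proof is correct and is exactly the argument the paper gives: specialize Proposition \ref{euler} using $l_i=1$ and $|p^{-1}(p_i)|=|G|/2$ for each node. The extra justification you supply for these two facts (the single $(-2)$-curve and the order-$2$ stabilizer acting with determinant $1$) is sound and merely spells out what the paper leaves implicit.
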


\Proof
Here, for each $i$, $l_i = 1$ and $|p^{-1}(p_i)| = \frac{|G|}{2}$.

\qed

%%%%%%%%%%%%%%%%%%%%%%%%%%%%%%%%%%%%%%%%%%%%%%%%%%%%%%%%%%%
\section{From the geometric  to the algebraic set up for calculating 
the fundamental group.}\label{fundgr}
%%%%%%%%%%%%%%%%%%%%%%%%%%%%%%%%%%%%%%%%%%%%%%%%%%%%%%%%%%%

Assume that $X = ( C_1 \times C_2 )/G$, where $G$ is a finite group 
of automorphisms of each factor $C_i$ and acts diagonally on $C_1 
\times C_2$ (for short: the action of $G$ on $C_1 \times C_2$ is {\em 
unmixed}).

\begin{defin} Let $X$ be as above and consider the minimal resolution
$S$ of the singularities of $X$.
The holomorphic map $ f_1\colon S \ra C_1' : = C_1 / G$ is called  a {\em
standard isotrivial fibration} if it is a relatively minimal fibration.

In the general case one lets $f' \colon S'\ra C_1'$ be the relatively 
minimal model
of $f_1$, and says that $f'$ is an isotrivial fibration.
\end{defin}

As already observed in remark \ref{vanKampen},  we have $\pi_1(X) =
\pi_1(S)$.

\medskip
The aim is now to determine the fundamental group of $X$ in terms
of the following algebraic data:

i)  the group $G$ together with

ii) the two surjective homomorphisms
$$
\varphi_1\colon \BT_1 : = \BT(g'_1;m_1,\ldots, m_r) \rightarrow G, \ \
\varphi_2\colon  \BT_2 : = \BT(g'_2;n_1,\ldots, n_s) \rightarrow G.
$$

\noindent

\begin{rem} The surjectivity of the homomorphisms $\varphi_1$ and
$\varphi_2$ implies
that for each $h_1\in \BT_1$ there exists an element $h_2 \in\BT_2$ such that
$(h_1,h_2) \in {\HH}$, where ${\HH} : = {\HH}(G;\varphi_1,\varphi_2)$ 
is the fibre product as defined in (\ref{fibreprod}).
\end{rem}

\noindent The exact sequence
\begin{equation}\label{ex1} 1 \rightarrow \Pi_{g_1} \times \Pi_{g_2}
\rightarrow \BT_1
\times
\BT_2
\rightarrow G \times G \rightarrow 1,
\end{equation} where $\Pi_{g_i}   := \pi_1(C_i)$, induces an exact
sequence
\begin{equation}\label{ex2} 1 \rightarrow  \Pi_{g_1} \times \Pi_{g_2}
\rightarrow {\HH}
\rightarrow G \cong \Delta_G \rightarrow 1.
\end{equation} Here $\Delta_G \subset G \times G$ denotes the diagonal.

\begin{definition}
     Let $H$ be a group. Then its {\em torsion subgroup} $\Tors(H)$
is the (normal)
subgroup  generated by all elements of finite order in $H$.
\end{definition}

We have the following

\begin{prop}\label{fundg} Let $G$ be a finite group and let
$$\varphi_1 \colon \BT_1 : = \BT(g'_1;m_1,\ldots, m_r) \rightarrow G,
\ \
\varphi_2 \colon \BT_2
:= \BT(g'_2;n_1,\ldots, n_s) \rightarrow G
$$ be two surjective homomorphisms. Consider the induced
action of $G$ on
$C_1 \times C_2$.

Then $\pi_1((C_1 \times C_2)/G) \cong {\HH}/
{\Tors}(\HH)$.
\end{prop}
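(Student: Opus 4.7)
The plan is to compute $\pi_1(X)$ for $X = (C_1 \times C_2)/G$ by a two-stage application of van Kampen's theorem; by remark \ref{vanKampen} one has $\pi_1(X) \cong \pi_1(S)$, so it suffices to work with the singular space $X$ directly.

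First, let $F \subset C_1 \times C_2$ be the set of points with non-trivial diagonal stabilizer; it is finite, since every nontrivial $g \in G$ fixes only finitely many points on each $C_i$ and $\mathrm{Stab}(x_1,x_2) = \mathrm{Stab}(x_1) \cap \mathrm{Stab}(x_2)$. Set $U := (C_1 \times C_2) \setminus F$ and $X^* := U/G = X \setminus \Sing(X)$, so that $U \to X^*$ is an \'etale Galois $G$-cover. Since $F$ has real codimension four in $C_1 \times C_2$, we have $\pi_1(U) \cong \Pi_{g_1} \times \Pi_{g_2}$. Covering $X$ by $X^*$ together with small contractible neighborhoods $V_k$ of each singular point $p_k$, each punctured neighborhood $V_k \setminus \{p_k\}$ is homotopy-equivalent to a spherical space form $S^3/H_k$, where $H_k$ is the cyclic stabilizer of a preimage of $p_k$; van Kampen then yields
\[
\pi_1(X) \;\cong\; \pi_1(X^*)/N,
\]
where $N$ is the normal subgroup generated by the images of the local fundamental groups $H_k$.

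Next I would identify $\pi_1(X^*) \cong \HH$. The \'etale cover $U \to X^*$ yields an extension
\[
1 \to \Pi_{g_1} \times \Pi_{g_2} \to \pi_1(X^*) \to G \to 1
\]
parallel to the extension (\ref{ex2}). To match the two I construct $\Phi\colon \pi_1(X^*) \to \BT_1 \times \BT_2$ by lifting a based loop in $X^*$ to a path in $U$ and projecting to each factor: pushed down to $C_i'$, each projection becomes a loop whose class in the orbifold fundamental group $\BT_i$ is well-defined. By construction $\Phi$ restricts to the identity on $\Pi_{g_1} \times \Pi_{g_2}$, and the image of $(\varphi_1 \times \varphi_2) \circ \Phi$ lies in the diagonal $\Delta_G \subset G \times G$ (both coordinates record the same deck transformation of $U \to X^*$), so $\Phi$ factors through $\HH$; the five-lemma on the two parallel extensions yields the required isomorphism.

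It remains to match $N$ with $\Tors(\HH)$, and this is the main obstacle. For $N \subseteq \Tors(\HH)$: tracing the local generator of $H_k$ at a singular point $p_k = \pi(x_1,x_2)$ under $\Phi$ expresses it as a conjugate of a pair $(c_i^a, c_j^b) \in \BT_1 \times \BT_2$, with $\gamma_i^a = \varphi_1(c_i^a)$ and $\delta_j^b = \varphi_2(c_j^b)$ recording how $H_k$ acts on $T_{x_1}, T_{x_2}$; both components are torsion, so the image is torsion in $\HH$. For the reverse inclusion, the classification of torsion in orbifold surface groups expresses every torsion element of $\HH$ in the form $(u c_i^a u^{-1}, v c_j^b v^{-1})$ satisfying the matching condition
\[
\varphi_1(u)\gamma_i^a\varphi_1(u)^{-1} \;=\; \varphi_2(v)\delta_j^b\varphi_2(v)^{-1} \;=:\; h \in G.
\]
Such an $h \neq 1$ then fixes both a point $x_1 \in C_1$ above the $i$-th branch point and a point $x_2 \in C_2$ above the $j$-th branch point, producing $(x_1,x_2) \in F$ and hence a singular point of $X$ whose local generator is, under $\Phi$, conjugate in $\HH$ to the given torsion element; the subtle point (using surjectivity of $\varphi_1, \varphi_2$ together with the freedom to modify $u,v$ by the centralizers of $c_i^a, c_j^b$) is that the conjugating element can be chosen inside $\HH$ itself, so normality of $N$ places every torsion element in $N$ and completes the proof.
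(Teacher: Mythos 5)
Your argument is correct, but it takes a genuinely different route from the paper's. The paper proves Proposition \ref{fundg} in two lines by invoking the theorems of Armstrong (\cite{armstrong1}, \cite{armstrong2}): for a discontinuous action of a group $\Gamma$ on a simply connected, sufficiently nice space, the fundamental group of the quotient is $\Gamma$ modulo the normal subgroup generated by the elements possessing fixed points. Applied to the action of $\HH$ on the product of the two upper half-planes, whose quotient is $X$, it only remains to observe that the elements of $\HH$ with fixed points are exactly those of finite order. What you write is in effect a self-contained proof of Armstrong's theorem in this special case: the van Kampen decomposition of $X$ into $X^*$ and cone neighbourhoods of the singular points, the five-lemma identification $\pi_1(X^*)\cong\HH$, and the matching of the local generators with $\Tors(\HH)$ together reconstruct exactly what the citation delivers, with the added benefit of making the geometric meaning of $\Tors(\HH)$ explicit. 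Two places where your sketch needs the details supplied, though neither is a gap: first, the link of $p_k$ is the \emph{free} quotient $S^3/H_k$ (and $X^*$ really is $X\setminus\Sing(X)$) because the stabilizer acts on each tangent line $T_{x_i}C_i$ through a faithful character, so no nontrivial element has eigenvalue $1$; second, for the final conjugation to take place inside $\HH$ the efficient observation is that $\HH$ acts transitively on each fibre of the universal cover of $C_1\times C_2$ over $X$ --- equivalently, as in the proof of Lemma \ref{Le6}, that surjectivity of $\varphi_2$ provides for every $z_1\in\BT_1$ an $f\in\BT_2$ with $(z_1,f)\in\HH$. You should also record that a torsion element of $\HH$ whose common image $h$ in $G$ is trivial is itself trivial (the kernels $R_i$ are torsion-free since the homomorphisms are appropriate), so that every nontrivial torsion element genuinely produces a singular point.
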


\Proof
It follows from the main theorem of \cite{armstrong1}, \cite{armstrong2}
since the elements of finite order are precisely those elements of
$\HH$ which have fixed points.

\qed

\section{The structure theorem for fundamental groups of quotients of
products of curves}

The aim of this section is to prove the following result

\begin{theo}\label{strfund}
Let $\BT_1, \ldots , \BT_n$ be orbifold surface groups and assume
that there are
surjective homomorphisms $\varphi_i \colon \BT_i \rightarrow G$ to a
finite group $G$. Let
\begin{align*}
\HH &: = {\HH}(G;\varphi_1, \ldots , \varphi_n)\\
&:= \{ \, (x_1, \ldots
,x_n)\in \BT_1
\times
\ldots \times \BT_n\  |\
\varphi_1(x_1)= \ldots =\varphi_n(x_n)\,\}
\end{align*}
be the fibre product of
$\varphi_1 , \ldots ,
\varphi_n$.

     Then ${\HH} / {\Tors}({\HH})$ has a normal subgroup $\mathcal{N}$
of finite index
isomorphic to the product of surface groups $\Pi_{h_1} \times \ldots
\times \Pi_{h_n}$.
\end{theo}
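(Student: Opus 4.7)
The plan is to follow the strategy outlined in the Introduction; I treat the case $n=2$, the general case being analogous. Two structural facts about an orbifold surface group $\BT=\BT(g';m_1,\ldots,m_r)$ underpin the argument: (a) every element of finite order in $\BT$ is conjugate to a power of some $c_j$, so that $\Tors(\BT)$ is the normal closure of $\{c_1,\ldots,c_r\}$ and $\BT/\Tors(\BT)\cong \Pi_{g'}$; and (b) such a group is Fuchsian, hence linear and therefore residually finite (Selberg), and good in the sense of Serre. These properties pass to finite-index subgroups and to direct products.

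The central step is to construct an exact sequence
\[
1 \longrightarrow E \longrightarrow \HH/\Tors(\HH) \xrightarrow{\,\Psi\,} \Psi(\widehat{\HH}) \longrightarrow 1,
\]
with $E$ finite and $\Psi(\widehat{\HH})$ a subgroup of finite index in a product $\BT'_1\times \BT'_2$ of orbifold surface groups. I would build $\Psi$ starting from the natural inclusion $\HH\hookrightarrow \BT_1\times \BT_2$, combined with a controlled quotient that identifies precisely the portion of the torsion of $\BT_1\times \BT_2$ already absorbed by $\Tors(\HH)$. Using fact (a), one shows that the discrepancy between $\Tors(\HH)$ and $\HH\cap(\Tors(\BT_1)\times \Tors(\BT_2))$ is a finite combinatorial datum attached to the common quotient $G$, which forces the kernel $E$ to be finite. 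This is the step requiring the most genuine work with the structure of orbifold surface groups, and it is the main obstacle.

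Granted this sequence, the theorem follows quickly. By (b), $\Psi(\widehat{\HH})$ is residually finite and good, so the Grunewald--Jaikin-Zapirain--Zalesski theorem gives residual finiteness of $\HH/\Tors(\HH)$. One then picks a finite-index subgroup $\Gamma\leq \HH/\Tors(\HH)$ with $\Gamma\cap E=\{1\}$, so that $\Psi|_\Gamma$ realises $\Gamma$ as a finite-index subgroup of $\BT'_1\times \BT'_2$. Applying Selberg's lemma to each $\BT'_i$, pass to torsion-free characteristic subgroups of finite index $\Lambda_i\leq \BT'_i$, which are surface groups $\Pi_{h_i}$, and replace $\Gamma$ by its intersection with $\Lambda_1\times \Lambda_2$; a Goursat-type analysis then extracts inside this a finite-index subgroup of the form $\Pi_{h'_1}\times \Pi_{h'_2}$. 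Replacing this by its normal core in $\HH/\Tors(\HH)$ and intersecting once more with a characteristic sub-product of surface groups delivers the desired normal finite-index subgroup $\mathcal{N}\cong \Pi_{h''_1}\times \Pi_{h''_2}$.
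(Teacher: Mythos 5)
There is a genuine gap, and it sits exactly where you locate it yourself: the construction of the exact sequence
$1 \to E \to \HH/\Tors(\HH) \to \Psi(\hat{\HH}) \to 1$ with $E$ finite. Your endgame (goodness of finite-index subgroups of products of orbifold surface groups, the Grunewald--Jaikin-Zapirain--Zalesski theorem, choosing $\Gamma$ with $\Gamma\cap E=\{1\}$, and extracting a normal finite-index product of surface groups) is sound and matches the paper. But the central step is only described, not proved, and the mechanism you sketch for it is the wrong one. You propose to measure the discrepancy between $\Tors(\HH)$ and $\HH\cap(\Tors(\BT_1)\times\Tors(\BT_2))$. Since $\Tors(\BT_i)$ is the normal closure of all the $c_j$'s, one has $\BT_i/\Tors(\BT_i)\cong\Pi_{g_i'}$; in the situation of interest ($g_1'=g_2'=0$) this forces $\Tors(\BT_i)=\BT_i$, so $\HH\cap(\Tors(\BT_1)\times\Tors(\BT_2))=\HH$ and your ``discrepancy'' is the whole group $\HH/\Tors(\HH)$ --- which is infinite in many of the cases the theorem is meant to cover. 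So that comparison cannot yield a finite kernel.

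The correct comparison, which requires real work, is with $\BT_i/\langle\langle L_i\rangle\rangle_{\BT_i}$, where $L_i$ is the \emph{finite} set of powers of generators that actually occur as components of torsion elements of $\HH$ (equivalently, whose images in $G$ stabilize points on both curves); these quotients are again orbifold surface groups $\BT(g_i';k_1,\ldots,k_r)$, generally nontrivial. Even then, finiteness of the kernel is not automatic: the paper first observes (Lemma \ref{Le7}) that the commutators $[c^l,k]$ with $c^l\in L_1$, $k\in R_1=\ker\varphi_1$ already lie in $\Tors(\HH)$, so one may pass to the intermediate quotients $\hat\BT_i=\BT_i/N(R_i,L_i)$ in which the image of $L_i$ is centralized by a finite-index subgroup; a Schur-type lemma (a group generated by finitely many torsion elements whose center has finite index is finite, Lemma \ref{Le5}) then shows that $\langle\langle L_i\rangle\rangle_{\hat\BT_i}$ is finite, and the kernel $E$ is trapped inside the product of these two finite groups. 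This chain of reductions is the substance of the proof, and it is absent from your proposal.
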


In particular, we have an exact sequence
$$ 1  \rightarrow \Pi_{h_1} \times \ldots \times \Pi_{h_n}
\rightarrow {\HH} / {\Tors}({\HH}) \ra G'
\ra 1.
$$

where $G'$ is a finite group.

\begin{rem}
     If $G$ acts freely on $C_1 \times \ldots \times C_n$, or in other words, if
${\Tors}({\HH}) = \{ 1\}$, this is just the exact
sequence (\ref{ex2}) (here $G = G'$). Morally speaking, our theorem
means that even
admitting fixed points of the action of $G$ the structure of the
fundamental group of the
quotient of a product of curves by $G$ (or equivalently of a minimal
resolution of
singularities) is not different from the \'etale case.
%Of course, in the non etale case the normal subgroup as well as the
%group $G'$ are by no means unique.
\end{rem}

In order to keep the notation down to a reasonable level we shall
give the proof of theorem (\ref{strfund}) only for the case $n =
2$. The proof for the general case is exactly the same.

\medskip

Let now $g', m_1, \ldots , m_r$ be natural numbers and consider an
orbifold surface
group
$$
\BT:=\BT(g';m_1, \ldots , m_r).
$$

In the sequel we will frequently use the following well known
properties of orbifold
surface groups:

\begin{prop}
i) Every element of finite order in $\BT$ is equal to  a
conjugate of a suitable power of one of the generators $c_i$ of finite order.

ii) Every orbifold surface group contains a surface group of  finite index.
\end{prop}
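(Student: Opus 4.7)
The plan is to realize $\BT = \BT(g';m_1,\ldots,m_r)$ geometrically and read both statements off the action. Outside of the two ``bad'' signatures $(0;n)$ and $(0;n_1,n_2)$ with $n_1\neq n_2$, for which $\BT$ is finite cyclic and both conclusions are immediate, the orbifold $\mathcal{O}$ with signature $(g';m_1,\ldots,m_r)$ is good and admits a uniformization: $\BT$ is a properly discontinuous and cocompact group of orientation-preserving isometries of one of the three simply-connected constant-curvature model $2$-manifolds $\widetilde{\mathcal{O}} \in \{S^2, \RR^2, \HH^2\}$, the choice being governed by the sign of the orbifold Euler characteristic $2-2g' - \sum_i(1-1/m_i)$. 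Under this realization, $c_i$ is a rotation of order exactly $m_i$ about a distinguished lift $\tilde p_i$ of the $i$-th cone point, while the hyperbolic/loxodromic/translation elements $a_j,b_j$ act without fixed points.

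For part (i), I would argue as follows. In each of the three geometries every orientation-preserving isometry of finite order is a rotation, so any torsion element $\gamma \in \BT$ has a fixed point $x \in \widetilde{\mathcal{O}}$. Proper discontinuity forces $\mathrm{Stab}_\BT(x)$ to be finite, and as a discrete subgroup of the (one-dimensional compact) rotation group at $x$ it is cyclic. The orbifold projection $\widetilde{\mathcal{O}} \to \widetilde{\mathcal{O}}/\BT$ is branched exactly over the cone points $p_1,\ldots,p_r$, so $x$ must lie in the $\BT$-orbit of some $\tilde p_i$; equivalently, $\mathrm{Stab}_\BT(x)$ is $\BT$-conjugate to $\langle c_i\rangle$. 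Therefore $\gamma$ is a conjugate of a power of $c_i$.

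For part (ii), I would invoke Selberg's Lemma. In the hyperbolic case $\BT$ embeds as a discrete subgroup of $\mathrm{PSL}_2(\RR)$, in the Euclidean case into $\mathrm{Isom}(\RR^2) \subset \mathrm{GL}_3(\RR)$, while in the spherical case $\BT$ is finite and the trivial subgroup $\{1\}=\Pi_0$ already works. In the first two cases $\BT$ is a finitely generated linear group in characteristic zero, so by Selberg's Lemma it contains a torsion-free normal subgroup $N$ of finite index. By part (i), $N$ meets no nontrivial conjugate of $\langle c_i\rangle$, hence acts freely on $\widetilde{\mathcal{O}}$. The quotient $\widetilde{\mathcal{O}}/N$ is then a closed orientable surface of some genus $h$, and $N = \pi_1(\widetilde{\mathcal{O}}/N) = \Pi_h$.

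The only real obstacle is invoking the uniformization of $2$-orbifolds and maintaining a uniform linear description of $\BT$ across the three geometries; the finitely many low-complexity signatures (including the bad ones, and the Euclidean ones, where the finite-index surface subgroup has $h\leq 1$) are best handled by hand. An alternative purely algebraic route to (ii) bypasses uniformization: the Fenchel--Bundgaard--Nielsen--Fox theorem constructs a homomorphism of $\BT$ onto a finite group whose kernel contains no nontrivial power of any $c_i$, and part (i) then shows this kernel is torsion-free, hence a surface group.
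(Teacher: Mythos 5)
Your proposal is correct, and it is essentially the argument the paper has in mind: the paper offers no proof of its own here, merely citing Beardon's \emph{The geometry of discrete groups}, and your reasoning (realizing $\BT$ as a cocompact properly discontinuous isometry group of $S^2$, $\RR^2$ or $\HH^2$, locating torsion via fixed points and point stabilizers for (i), and applying Selberg's Lemma, or equivalently the Fenchel--Fox theorem, to extract a torsion-free normal subgroup of finite index for (ii)) is precisely the standard treatment found there. Your handling of the degenerate spherical and bad signatures is a sensible completion, consistent with the paper's convention that $\Pi_0$ and $\Pi_1$ count as surface groups.
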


\Proof Cf. \cite{beardon}.

\qed

\begin{definition}

Let $R \lhd \BT$ be a normal subgroup of finite index and let $L$ be
an arbitrary subset of  $\BT$. We define
\begin{equation} {\rm N}(R,L):=  \langle  \langle\  \{ \, h k h^{-1}
k^{-1} \ | \  h\in
L,\  k\in R\, \}\ \rangle  \rangle_{\BT}
\end{equation}  to be the normal subgroup in $\BT$ generated by the
set
$
\{ \, h k h^{-1} k^{-1}  \ |\ h\in L,\  k\in R\,\},
$  and denote the corresponding quotient by
$$
\hat \BT := \hat \BT(R,L):=\BT / {\rm N}(R,L).
$$
\end{definition}

The centralizer of the image of $L$ in $\hat \BT(R,L)$ is a finite index
subgroup of $\hat \BT$.

\begin{prop}\label{Pro6} Let $R \lhd \BT$ be a normal subgroup of
finite index and  let $L$ be a finite subset of $\BT$ consisting of
elements  of finite order.    Then the normal subgroup $ \langle
\langle L \rangle
\rangle _{\hat \BT}$, generated in
$\hat \BT$ by the image of the set $L$, is finite.
     \end{prop}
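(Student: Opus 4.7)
The plan is to show that the normal closure $M := \langle\langle\bar L\rangle\rangle_{\hat\BT}$ is finitely generated by torsion elements and is central-by-finite; Schur's theorem then forces $M$ to be finite. Writing $\bar R$ for the image of $R$ in $\hat\BT$, the whole argument rests on the observation that in $\hat\BT$ every conjugate of an image $\bar l_i$ commutes with $\bar R$.

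First I would verify this key commutation relation. For $g\in\BT$, $r\in R$, and $l\in L$, normality of $R$ in $\BT$ gives $g^{-1}rg\in R$, and a direct manipulation yields
\[
[\,g\,l\,g^{-1},\,r\,] \;=\; g\,[\,l,\,g^{-1}rg\,]\,g^{-1}.
\]
The right-hand side lies in $\mathrm{N}(R,L)$ because $[l,g^{-1}rg]\in\mathrm{N}(R,L)$ by the very definition of $\mathrm{N}(R,L)$, and $\mathrm{N}(R,L)$ is normal in $\BT$. Passing to $\hat\BT$, this says that $M$ centralizes $\bar R$.

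Two consequences follow. First, $\bar g\,\bar l_i\,\bar g^{-1}$ is unchanged when $\bar g$ is multiplied on the right by an element of $\bar R$, so this conjugate depends only on the coset $\bar g\bar R\in\hat\BT/\bar R$; since $[\hat\BT:\bar R]\leq[\BT:R]$ is finite, there are only finitely many such conjugates, and each has finite order (dividing the order of the corresponding $l_i$). Hence $M$ is finitely generated by torsion elements. Second, $M\cap\bar R\subseteq Z(M)$ and $M/(M\cap\bar R)$ embeds in the finite group $\hat\BT/\bar R$, so $[M:Z(M)]<\infty$.

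The conceptual core of the argument, and the step where one needs the right idea rather than a computation, is then the appeal to Schur's theorem: a group whose center has finite index has finite commutator subgroup. Applied here, it yields that $[M,M]$ is finite. On the other hand, $M/[M,M]$ is a finitely generated abelian group generated by images of torsion elements, so it is a finitely generated torsion abelian group, hence finite. Combining, $|M|=|[M,M]|\cdot|M/[M,M]|<\infty$, which is the assertion of the proposition.
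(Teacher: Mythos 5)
Your proof is correct and follows the same strategy as the paper's: both arguments hinge on the observation that the image of $L$ (and hence each of its conjugates) is centralised by $\hat R$ modulo ${\rm N}(R,L)$, so that the normal closure is generated by finitely many torsion elements (one conjugate per coset of $\hat R$) and contains $\hat R\cap\langle\langle L\rangle\rangle_{\hat\BT}$ as a central subgroup of finite index. The only divergence is at the very end: you invoke Schur's theorem (finite index centre implies finite derived subgroup) together with finiteness of the abelianisation, whereas the paper deliberately avoids Schur and instead proves the self-contained lemma \ref{Le5} by an elementary splitting argument; the paper itself notes that its lemma is a consequence of Schur's theorem, so the two endgames are interchangeable.
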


The proof of proposition \ref{Pro6} follows easily from the following lemma.
Even though this lemma is a consequence of a deep theorem of Schur
(see \cite{Huppert}),
we prefer to give a short and self contained proof.

\begin{lemma}\label{Le5} Let $H$ be a group such that
\begin{itemize}
\item[i)] $H$ is generated by finitely many elements of finite order,
\item[ii)] the center ${\rm Z}(H)$ has finite index in $H$.
\end{itemize} Then $H$ is finite.
\end{lemma}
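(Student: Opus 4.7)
The cleanest route is via Dietzmann's lemma applied to the given generating set. Let $x_1,\dots,x_k$ be the finite-order generators of $H$ from hypothesis (i), and set $n:=[H:Z(H)]$, which is finite by hypothesis (ii).

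First I would observe that each conjugacy class $x_i^H$ is finite: since $Z(H)\subseteq C_H(x_i)$, one has $|x_i^H|=[H:C_H(x_i)]\leq n$. Hence
\[
S \;:=\; \bigcup_{i=1}^k x_i^H
\]
is a finite set of torsion elements that is closed under $H$-conjugation. Let $N:=\langle S\rangle$. Because $S$ is conjugation invariant, $N$ is normal in $H$; and since $N$ contains every $x_i$ and the $x_i$ generate $H$, we get $N=H$. So it suffices to prove that $N$ is finite, which is precisely the content of Dietzmann's lemma.

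For a self-contained proof of Dietzmann, I would enumerate $S=\{s_1,\dots,s_m\}$ and show that every element of $N$ admits a collected expression $s_1^{e_1}s_2^{e_2}\cdots s_m^{e_m}$ with $0\leq e_j<\operatorname{ord}(s_j)$. The key identity is the rearrangement
\[
s_j\, s_i \;=\; \bigl(s_j s_i s_j^{-1}\bigr)\cdot s_j,
\]
in which the factor $s_j s_i s_j^{-1}$ lies in $S$ because $S$ is conjugation-closed. Applying this repeatedly to any word in $S$ lets one move all occurrences of $s_1$ to the front without increasing the length, then all $s_2$'s, and so on; finally reducing each exponent modulo $\operatorname{ord}(s_j)$ yields the desired normal form. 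Consequently $|N|\leq\prod_{j=1}^m\operatorname{ord}(s_j)<\infty$, and the lemma follows.

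The main (in fact the only) substantive step is the rearrangement/collection argument behind Dietzmann's lemma; the reduction from the hypotheses of the lemma to the invocation of Dietzmann is entirely formal, and this route avoids any appeal to the deeper theorem of Schur cited in the paper.
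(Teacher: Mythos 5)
Your proof is correct, and it takes a genuinely different route from the paper's. The paper argues by contradiction on the center: since $H$ is finitely generated and ${\rm Z}(H)$ has finite index, ${\rm Z}(H)$ is finitely generated, so if it were infinite then ${\rm Z}(H)/m{\rm Z}(H)$ would be nontrivial for every $m\neq\pm1$; choosing $m$ coprime to $[H:{\rm Z}(H)]$ and to the orders of the generators, the central extension $1\to {\rm Z}(H)/m{\rm Z}(H)\to H/m{\rm Z}(H)\to H/{\rm Z}(H)\to 1$ splits and is in fact a direct product, yielding a surjection of $H$ onto the nontrivial group ${\rm Z}(H)/m{\rm Z}(H)$ of exponent dividing $m$ --- impossible because every generator of $H$ has order coprime to $m$. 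You instead use the finite-index hypothesis only to bound the conjugacy classes of the generators, and then hand the finite, conjugation-closed, torsion generating set to Dietzmann's lemma. Both arguments avoid the deep theorem of Schur mentioned in the paper; yours is direct rather than by contradiction and isolates the group-theoretic content in a single classical lemma, while the paper's stays within elementary facts about finitely generated abelian groups and split extensions. The reduction itself (finiteness of $S$, normality of $N=\langle S\rangle$, $N=H$) is airtight.

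One caveat concerns your sketched proof of Dietzmann's lemma: the normal form $s_1^{e_1}\cdots s_m^{e_m}$ is not obviously reached by the procedure you describe, because collecting the $s_2$'s conjugates the intervening letters, and conjugation by $s_j$ permutes $S$ without fixing $s_i$ for $i\neq j$; it can therefore reintroduce occurrences of $s_1$ to the right of the $s_1$-block, and the ``first all $s_1$'s, then all $s_2$'s'' scheme need not terminate in that form. The standard repair is a length argument: represent an element by a shortest positive word in $S$ (inverses are positive powers), note that conjugation by $s^{\pm1}$ fixes $s$ and permutes $S\setminus\{s\}$, so the number of occurrences of any fixed letter is invariant under the merging moves; if some $s$ occurred at least $\operatorname{ord}(s)$ times one could merge those occurrences into a single power and shorten the word. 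Hence each letter occurs fewer than $\operatorname{ord}(s)$ times, the minimal length is at most $\sum_j(\operatorname{ord}(s_j)-1)$, and $N$ is finite. With this adjustment --- or simply by citing Dietzmann's lemma as a known result rather than reproving it --- your argument is complete.
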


\Proof By ii) it suffices to show that ${\rm Z}(H)$ is finite.
Observe first that since ${\rm Z}(H)$ has finite index, it is
finitely generated.

Assume to the contrary that ${\rm Z}(H)$ is infinite. Writing
     the multiplication of ${\rm Z}(H)$ additively, we denote by $m{\rm
Z}(H)$ the subgroup
of ${\rm Z}(H)$ consisting of $m$-th powers. Since ${\rm Z}(H)$ is an
infinite,  finitely
generated abelian group we infer that ${\rm Z}(H)/ m{\rm Z}(H)$ is
non trivial for every
$m\in \ZZ \setminus \{1,-1\}$. Note also that $m{\rm Z}(H)$ is normal in
$H$.

We choose $m$ now coprime to the order of $H / {\rm Z}(H)$ and also
coprime  to the
orders of the generators of $H$. Consider the exact sequence
\begin{equation}
1 \to {\rm Z}(H) / m{\rm Z}(H)\to H / m {\rm Z}(H)\to
H / {\rm Z}(H) \to 1.
\end{equation} By our choice of $m$ this sequence splits (cf.
\cite{Huppert}), whence
$ H / m {\rm Z}(H)$ is a semi-direct product of
${\rm Z}(H) / m{\rm Z}(H)$ and $H / {\rm Z}(H)$.

Since
${\rm Z}(H) / m{\rm Z}(H)$ is central in $ H / m {\rm Z}(H)$ we infer
that $ H / m {\rm
Z}(H)$ is isomorphic to the direct product of
${\rm Z}(H) / m{\rm Z}(H)$ and $H / {\rm Z}(H)$. Therefore there is a
surjective
homomorphism
$H / m {\rm Z}(H)\to {\rm Z}(H) / m {\rm Z}(H)$, a contradiction to
the fact that $m$ is coprime to the order of each generator of $H$.

\qed

\medskip

\Proof(of prop. (\ref{Pro6})).
Denote by $\hat R$ the image of $R$ in $\hat \BT$, and recall that
$c_1,\ldots,c_r$ are the generators of finite order of $\BT$.

$L$ consists of finitely many elements, each a conjugate of a $c_i^{l_i}$.
Their image in $\hat \BT$ is centralised by $\hat R$. Since $\hat R$
has finite index in $\hat \BT$, finitely many conjugates of the
elements of $L$ suffice to generate $\langle \langle L \rangle
\rangle_{\hat \BT}$.

Hence  the group $\langle \langle L \rangle\rangle_{\hat \BT}$ is 
generated by finitely many elements of finite order.

Now, $\hat R \cap \langle \langle L \rangle\rangle_{\hat \BT}$ has 
finite index in $\langle \langle L \rangle\rangle_{\hat \BT}$. By 
construction
$\hat R \cap \langle \langle L \rangle\rangle_{\hat \BT}$ is a 
central subgroup of $\langle \langle L \rangle\rangle_{\hat \BT}$. 
Therefore  the second
condition of lemma
\ref{Le5} is fulfilled, and we conclude that $\langle \langle L 
\rangle\rangle_{\hat \BT}$ is  finite.

\qed

\begin{lemma}\label{Pro7} Let $\BT(g';m_1,\ldots, m_r)$ be an
orbifold surface group
and  let $L$ be a subset of  $\BT(g';m_1,\ldots, m_r)$ consisting of
elements  of finite
order.

Then there are $k_1,\ldots , k_r \in \mathbb{N}$, such that $ k_i |
m_i \ \forall
i=1,\dots r$ such that
\begin{equation}
\BT(g';m_1,\ldots, m_r) / \langle \langle L
\rangle \rangle _{\BT(g';m_1,\ldots, m_r)}\cong \BT(g';k_1,\ldots, k_r).
\end{equation}  In particular, the quotient group $\BT(g';m_1,\ldots,
m_r) / \langle
\langle L
\rangle \rangle _{\BT(g';m_1,\ldots, m_r)}$ is again an orbifold surface group.
\end{lemma}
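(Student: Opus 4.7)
The plan is to exploit part i) of the preceding proposition to the effect that every torsion element of $\BT := \BT(g';m_1,\ldots,m_r)$ is conjugate to a power of some generator $c_i$. This will let us show that passing to the quotient by $\langle\langle L\rangle\rangle_{\BT}$ merely tightens each torsion order $m_i$ to some divisor $k_i$, without introducing any new relations among the hyperbolic generators $a_j, b_j$.

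\textbf{Step 1 (reduce to powers of generators).} For every $\ell \in L$, part i) of the preceding proposition yields an index $i(\ell) \in \{1,\ldots,r\}$, an exponent $a_\ell \in \ZZ$, and an element $w_\ell \in \BT$ such that $\ell = w_\ell\, c_{i(\ell)}^{a_\ell}\, w_\ell^{-1}$. Because normal closure is insensitive to conjugation of its generating set, one has
$$\langle\langle L\rangle\rangle_{\BT} \;=\; \langle\langle\, \{c_{i(\ell)}^{a_\ell} : \ell \in L\}\,\rangle\rangle_{\BT}.$$

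\textbf{Step 2 (collapse to one exponent per index).} For each $i \in \{1,\ldots,r\}$ put $A_i := \{a_\ell : i(\ell) = i\} \cup \{m_i\} \subseteq \ZZ$, and define $k_i := \gcd(A_i)$. Clearly $k_i \mid m_i$. In any quotient of $\BT$ in which every $c_i^a$ with $a \in A_i$ becomes trivial, the subgroup of $\langle c_i \rangle$ that is killed is exactly the cyclic group generated by $c_i^{k_i}$; hence imposing the family of relations $\{c_i^a = 1 : a \in A_i\}$ is equivalent to imposing the single relation $c_i^{k_i} = 1$. The old relation $c_i^{m_i} = 1$ then becomes redundant.

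\textbf{Step 3 (read off the presentation).} Starting from the given presentation of $\BT(g';m_1,\ldots,m_r)$ and, by Steps 1 and 2, adjoining to it precisely the relations $c_i^{k_i} = 1$ for $i=1,\ldots,r$, one arrives at
$$\langle\, a_1,b_1,\ldots,a_{g'},b_{g'},c_1,\ldots,c_r \,\mid\, c_1^{k_1},\ldots,c_r^{k_r},\, \textstyle\prod_{j=1}^{g'}[a_j,b_j]\cdot c_1\cdots c_r\,\rangle,$$
which is by definition $\BT(g';k_1,\ldots,k_r)$. This gives the desired isomorphism.

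The only real substance in the argument is Step 1, where the structural classification of finite-order elements in orbifold surface groups is invoked; once that is in hand, Steps 2 and 3 are purely presentation-theoretic bookkeeping, and the crucial point to check is simply that no new relation involving the hyperbolic generators $a_j,b_j$ is forced, which is automatic since the added relators only tighten the cyclic orders of the $c_i$.
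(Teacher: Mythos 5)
Your argument is correct and follows essentially the same route as the paper, which simply observes (using that every torsion element is conjugate to a power of some $c_i$) that $\langle\langle L\rangle\rangle_{\BT}$ is normally generated by a set of the form $\{c_1^{k_1},\ldots,c_r^{k_r}\}$ with $k_i \mid m_i$. Your Steps 2 and 3 merely make explicit the gcd computation and the Tietze-transformation bookkeeping that the paper leaves implicit.
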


\Proof
The normal subgroup $\langle \langle L
\rangle \rangle _{\BT(g';m_1,\ldots, m_r)}$ is  normally generated by
a set of the form
$\{c_1^{k_1}, \ldots , c_r^{k_r} \}$ with $ k_1, \ldots , k_r \geq 1$,
$ k_i \leq m_i$, $ k_i | m_i$.

\qed

\begin{rem} Let $R \lhd \BT$ be a normal subgroup of finite index and 
let $L$ be
an arbitrary subset of  $\BT$ consisting of elements of finite order.

Note that
$$ E(R,L):=\langle \langle L
\rangle \rangle_{\hat \BT(R,L)}
$$
     is a finite normal subgroup of $\hat \BT(R,L)$ and
$$ N(R,L)\lhd \langle \langle L
\rangle \rangle_{\BT}.
$$

Hence we have
$$
\hat \BT(R,L) / \langle \langle L
\rangle \rangle_{\hat\BT(R,L)}\cong \BT / \langle
\langle L
\rangle \rangle_{\BT}.
$$
\end {rem}

We want to apply the above general considerations to our situation.
For this purpose we have to fix some more notation.

\medskip\noindent We fix two orbifold surface groups
\begin{align*}
\BT_1&:=\BT(g'_1;m_1,\ldots, m_r)=\\
     & = \langle a_1,b_1,\ldots , a_{g_1'},b_{g_1'}, c_1,\ldots, c_r |
c_1^{m_1},\ldots,c_r^{m_r},\prod_{i=1}^{g_1'} [a_i,b_i]\cdot c_1\cdot 
\ldots \cdot
c_r \rangle,
\end{align*}
and
\begin{align*}
\BT_2&:=\BT(g'_2;n_1,\ldots, n_s)=\\
     & = \langle a'_1,b'_1,\ldots , a'_{g_2'},b'_{g_2'}, d_1,\ldots, d_s |
d_1^{n_1},\ldots,d_s^{n_s},\prod_{i=1}^{g_2'} [a'_i,b'_i]\cdot 
d_1\cdot \ldots \cdot
d_s \rangle,
\end{align*}
together with two surjective homomorphisms $\varphi_1 \colon \BT_1
\rightarrow G$,
$\varphi_2 \colon \BT_2 \rightarrow G$ to a (nontrivial) finite group $G$.

\noindent Denote by
\begin{equation*} R_1:={\rm Ker}(\varphi_1) \lhd \BT_1 ,\qquad R_2:={\rm
Ker}(\varphi_2) \lhd \BT_2
\end{equation*} the respective kernels. If $\varphi_1$ and $\varphi_2$
are appropriate
orbifold homomorphisms, then
$R_1$ and
$R_2$ are both isomorphic to surface groups (else, they are just
orbifold surface groups).

      Define further
$$C:=\{\, c_1,\ldots,c_r\,\}\subset \BT_1,\qquad  D:=\{\,
d_1,\ldots,d_s\,\}\subset
\BT_2.$$

\begin{lemma}\label{Le6} Let $G$ be a finite group and let $\varphi_1
\colon \BT_1 \to G$,
$\varphi_2 \colon \BT_2 \to G$ be two surjective group homomorphisms.

\noindent 1) Then there is a finite set ${\sN}_1 \subset \BT_1 \times \BT_2$ of
elements of the form
$$(c^{l}, z d^{n} z^{-1}) \in \BT_1 \times \BT_2 ~ ,  \qquad c\in
C,\, d\in D,\, l,\,
n\in\NN,\,  z\in\BT_2,
$$ which have the property that
\begin{itemize}
\item[a)] ${\sN}_1\subset {\HH}:={\HH}(G;\varphi_1,\varphi_2)$,
\item[b)] the normal closure of ${\sN}_1$ in ${\HH}$  is
equal to ${\Tors}({\HH})$.
\end{itemize}
\noindent 2) Similarly  there is ${\sN}_2  \subset \BT_1 \times \BT_2$
as in 1) exchanging the roles of  $\BT_1 $ and $ \BT_2 $.
Then the two sets  ${\sN}_1, \  {\sN}_2$ can be moreover
chosen in such a
way that   the following further condition holds:

if $(c^{l}, z d^{n} z^{-1})$ is an element of ${\sN}_1$, there is a
$z_1\in \BT_1$ such that $(z_1 c^{l}z_1^{-1}, d^{n})$ is in ${\sN}_2$,  and
conversely, if  $(z_1 c^{l}z_1^{-1}, d^{n})$ is in ${\sN}_2$, then
there is $z_2\in
\BT_2$, such that $(c^{l}, z_2 d^{n} z_2^{-1})$  is an element of ${\sN}_1$.
\end{lemma}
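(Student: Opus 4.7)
The overall strategy for part 1) is to first reduce every torsion element of $\HH$ to a ``standard form'' via $\HH$-conjugation, and then to pick one canonical representative for each admissible $G$-datum. I will use throughout that in an orbifold surface group every element of finite order is conjugate to a power of one of the generators $c_i$ (resp.\ $d_j$).

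The reduction step: every torsion element of $\HH$ has the shape $(u c_i^\ell u^{-1},\, v d_j^n v^{-1})$ with $u \in \BT_1$, $v \in \BT_2$. Picking $\tilde u \in \BT_2$ with $\varphi_2(\tilde u) = \varphi_1(u)$ (possible by surjectivity of $\varphi_2$) gives $(u, \tilde u) \in \HH$, and conjugation by $(u,\tilde u)^{-1}$ turns our element into one of the form $(c_i^\ell,\, w d_j^n w^{-1})$ with $w := \tilde u^{-1} v$. So modulo $\HH$-conjugation every torsion element is of the form required for $\sN_1$. The existence of such an element in $\HH$ forces $\varphi_1(c_i^\ell)$ and $\varphi_2(d_j^n)$ to be conjugate in $G$. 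For every quintuple $(i,\ell,j,n,\alpha)$ with $\alpha \in G$ satisfying $\alpha\,\varphi_2(d_j^n)\,\alpha^{-1} = \varphi_1(c_i^\ell)$, fix once and for all a lift $z_\alpha \in \BT_2$ with $\varphi_2(z_\alpha) = \alpha$, and place $(c_i^\ell,\, z_\alpha d_j^n z_\alpha^{-1})$ in $\sN_1$. The sets $C$, $D$ and $G$ are finite, so $\sN_1$ is finite and by construction is contained in $\HH$ and consists of finite-order elements, so its normal closure lies in $\Tors(\HH)$. For the reverse inclusion, by the reduction it suffices to show that any standard-form torsion element $(c_i^\ell,\, w d_j^n w^{-1}) \in \HH$ lies in the normal closure of $\sN_1$. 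Set $\alpha := \varphi_2(w)$, $r := z_\alpha^{-1} w \in R_2 = \ker \varphi_2$, and $h := z_\alpha r z_\alpha^{-1}$. Then $\varphi_2(h) = \alpha \cdot 1 \cdot \alpha^{-1} = 1$, hence $(1,h) \in \HH$, and a direct computation gives
\[
(1,h)\,(c_i^\ell,\, z_\alpha d_j^n z_\alpha^{-1})\,(1,h^{-1}) \;=\; (c_i^\ell,\, w d_j^n w^{-1}),
\]
completing the proof of 1).

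For part 2) I construct $\sN_2$ by the $\BT_1 \leftrightarrow \BT_2$-symmetric recipe: for each quintuple $(i,\ell,j,n,\alpha)$ used to define $\sN_1$, also choose a lift $z_1 \in \BT_1$ with $\varphi_1(z_1) = \alpha^{-1}$ and put $(z_1 c_i^\ell z_1^{-1},\, d_j^n)$ into $\sN_2$; since $\alpha^{-1}\varphi_1(c_i^\ell)\alpha = \varphi_2(d_j^n)$, this element lies in $\HH$, and the argument of the previous paragraph (with the roles of $\BT_1$ and $\BT_2$ reversed) shows that its normal closure in $\HH$ is again $\Tors(\HH)$. The pairing condition is now immediate: the element of $\sN_2$ just exhibited is the partner of $(c_i^\ell,\, z_\alpha d_j^n z_\alpha^{-1}) \in \sN_1$ with $z_1$ the required conjugator, and symmetrically. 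The main obstacle is purely the book-keeping in the reduction step and in tracking which cosets of lifts must be enumerated; the essential observation that unlocks everything is that $\HH$-conjugation by the subgroup $\{(1,h) : h \in R_2\}$ is exactly what sweeps through all valid conjugators $w$ once the image $\alpha = \varphi_2(w) \in G$ is fixed.
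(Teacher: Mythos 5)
Your proof is correct and follows essentially the same route as the paper: normalize a torsion element of $\HH$ to the form $(c_i^\ell, w d_j^n w^{-1})$ by conjugating with an element $(u,\tilde u)\in\HH$, then observe that conjugation by elements $(1,h)$ with $h\in R_2$ accounts for all conjugators $w$ with a fixed image $\alpha=\varphi_2(w)$ in $G$. In fact you supply the finiteness bookkeeping (one representative $z_\alpha$ per admissible quintuple, plus the explicit computation $hz_\alpha=w$) that the paper's two-sentence proof leaves implicit.
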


\Proof Every element of finite order in
${\HH}$ is of the form
$$ (z_1c^l z_1^{-1},z_2d^nz_2^{-1}),
$$  where $c \in C$, $d \in D$, $l,n \in \NN$ and $z_i \in \BT_i$.
Since there is an
element in
${\HH}$ of the form $(z_1,f)$ we can say that  every element of
finite order in
${\HH}$ is conjugate in ${\HH}$ to an
element of the form $(c_i^l,zd_j^nz^{-1})$.

\qed

\begin{lemma}\label{Le7} Under the same assumptions as in lemma
(\ref{Le6}), the following
holds:
\begin{itemize}
     \item[i)] if $(c^{l}, z d^{n} z^{-1}) \in {\sN}_1$ (for $c\in C$,
$d\in D$, $l,\,
n\in\NN$, $z\in\BT_2$), then
$$ (c^{l} k c^{-l} k^{-1},1) \in {\Tors}({\HH})
$$ for all $k\in R_1$;
\item[ii)] if $(z c^{l} z^{-1}, d^{n}) \in {\sN}_2$ (for $c\in C$,
$d\in D$, $l,\,
n\in\NN$, $z\in\BT_1$), then
$$ (1,d^{n} k d^{-n} k^{-1}) \in {\Tors}({\HH})
$$ for all $k\in R_2$.
\end{itemize}
\end{lemma}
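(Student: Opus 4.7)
The plan is to realize each displayed element as a commutator $[A,B]$, with $A\in{\Tors}(\HH)$ and $B\in\HH$; normality of ${\Tors}(\HH)$ in $\HH$ will then force $[A,B]\in{\Tors}(\HH)$.

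For (i), I set $A:=(c^{l},zd^{n}z^{-1})$ and $B:=(k,1)$, where $k\in R_1$. By hypothesis $A\in\sN_1\subset\HH$; moreover $c^{l}$ is a power of a torsion generator of $\BT_1$ and $zd^{n}z^{-1}$ is conjugate to a power of a torsion generator of $\BT_2$, so $A$ has finite order in $\BT_1\times\BT_2$ and hence in $\HH$. Thus $A\in{\Tors}(\HH)$. On the other hand, $\varphi_1(k)=1=\varphi_2(1)$ gives $B\in\HH$. A direct calculation yields
\begin{equation*}
[A,B]=ABA^{-1}B^{-1}=(c^{l}kc^{-l}k^{-1},\,1),
\end{equation*}
since the second coordinate telescopes as $(zd^{n}z^{-1})\cdot 1\cdot(zd^{-n}z^{-1})\cdot 1=1$. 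By normality of ${\Tors}(\HH)$ we have $BA^{-1}B^{-1}\in{\Tors}(\HH)$, and therefore $[A,B]=A\cdot(BA^{-1}B^{-1})\in{\Tors}(\HH)$, which is exactly assertion (i).

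Part (ii) will be completely symmetric: swap the roles of $\BT_1$ and $\BT_2$, set $A':=(zc^{l}z^{-1},d^{n})\in\sN_2\subset{\Tors}(\HH)$ and $B':=(1,k)\in\HH$ for $k\in R_2$, and then a parallel computation gives $[A',B']=(1,\,d^{n}kd^{-n}k^{-1})\in{\Tors}(\HH)$.

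There is essentially no obstacle here: the entire content of the lemma is the observation that $A$ is of finite order (immediate from the structure of orbifold surface groups recalled before Lemma \ref{Le6}), combined with the identity $[A,B]=A\cdot(BA^{-1}B^{-1})$ and the normality of ${\Tors}(\HH)$ in $\HH$. The interest of the lemma is thus not in its proof but in its subsequent use: it transfers the relations $c^{l}kc^{-l}k^{-1}\equiv 1$ (for $k\in R_1$) into ${\HH}/{\Tors}(\HH)$, which is what will be needed to identify the quotient up to finite index with a product of orbifold surface groups.
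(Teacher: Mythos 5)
Your proof is correct and follows essentially the same route as the paper: the paper's own argument is precisely the identity $(c^{l}kc^{-l}k^{-1},1)=A\cdot\bigl(BA^{-1}B^{-1}\bigr)$ with $A=(c^{l},zd^{n}z^{-1})\in\sN_1\subset{\Tors}(\HH)$ and $B=(k,1)\in\HH$, together with normality of ${\Tors}(\HH)$, and part (ii) by symmetry. No discrepancies to report.
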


\Proof We have $(k,1)\in {\HH}$ for every
$k\in R_1$. From
$$ (c^{l} k c^{-l} k^{-1},1)=(c^{l}, z d^{n} z^{-1})\cdot (
(k,1)\cdot (c^{l}, z d^{n}
z^{-1})^{-1}\cdot (k,1)^{-1} )
$$ the first claim follows. The second claim follows by symmetry.

\qed

\begin{definition} We define $L_i \subset \BT_i$ as follows:
$L_1$ is the set of first components of elements of $\sN_1$,
$L_2$ is the set of second components of elements of $\sN_2$.

\end{definition}

Now the two homomorhisms $\varphi_1 \colon \BT_1 \to G$,
$\varphi_2 \colon \BT_2 \to G$ induce surjective homomorphisms
$$\hat{\varphi_1} \colon \hat\BT_1:=\hat\BT(R_1,L_1) \to G, \ \
\hat\varphi_2 \colon \hat\BT_2:=\hat\BT(R_2,L_2)\to G.
$$

Let's take then the fibre product
\begin{equation*}
\hat{\HH}:=\hat{\HH}(G;\hat\varphi_1,\hat\varphi_2):= \{ \, (x,y)\in 
\hat\BT_1\times
\hat\BT_2\  |\
\hat\varphi_1(x)=\hat\varphi_2(y)\,\}.
\end{equation*}
We shall define now a homomorphism
\begin{equation*}
\Phi\colon \hat{\HH} \to {\HH}/{\Tors}({\HH})
\end{equation*} as follows: for $(x_0,y_0)\in\hat{\HH}$ choose
$(x,y)\in \BT_1\times\BT_2$  such that $x$ maps to $x_0$ under the
quotient homomorphism $\BT_1\to\hat\BT_1$ and
$y$ maps to $y_0$ under the  quotient homomorphism $\BT_2\to\hat\BT_2$.

We have
$\varphi_1(x)=\hat\varphi_1(x_0)$, $\varphi_2(y)=\hat\varphi_2(y_0)$,
hence $(x,y)\in {\HH}$. We set
\begin{equation*}
\Phi((x_0,y_0)):=(x,y)\ {\rm mod}\ {\Tors}({\HH}).
\end{equation*}
Lemma \ref{Le7} shows that $\Phi$ is well defined.
Obviously, $\Phi$ is  a
homomorphism.

We have

\begin{lemma}\label{Le8}
$\Phi\colon \hat{\HH}\to {\HH} /
{\Tors}({\HH})$ is surjective. Its kernel
is contained in
$E(R_1,L_1)\times E(R_2,L_2)$.

In particular, we have an exact sequence
\begin{equation}\label{ee} 1 \to E\to
\hat{\HH} \to
{\HH} / {\Tors}({\HH})
\to 1,
\end{equation}

where $E$ is a finite group.
\end{lemma}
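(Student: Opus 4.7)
The plan is to verify each assertion directly, relying crucially on the preceding lemmas \ref{Le6} and \ref{Le7}, and on proposition \ref{Pro6}.

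For surjectivity, I would start with an arbitrary $(x,y) \in \HH$ and form its image $(x_0, y_0) \in \hat\BT_1 \times \hat\BT_2$ under the canonical quotient maps. Since $N(R_i, L_i)$ is contained in $R_i = \ker\varphi_i$ (a commutator of an element of $R_i$ with anything is still in $R_i$), the $\hat\varphi_i$ are well-defined and factor $\varphi_i$. Therefore $\hat\varphi_1(x_0) = \varphi_1(x) = \varphi_2(y) = \hat\varphi_2(y_0)$, so $(x_0, y_0) \in \hat\HH$ and $\Phi(x_0, y_0) = (x,y) \bmod \Tors(\HH)$ by construction.

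For the kernel, suppose $(x_0, y_0) \in \ker \Phi$ and let $(x,y) \in \BT_1 \times \BT_2$ be a lift; by definition of $\Phi$ (well-defined by lemma \ref{Le7}) this means $(x,y) \in \Tors(\HH)$. By lemma \ref{Le6}, $(x,y)$ is a product of $\HH$-conjugates of elements of $\sN_1$; each such conjugate has first coordinate of the form $u\,c^l\,u^{-1}$ for some $u \in \BT_1$ and some $c^l \in L_1$. The key observation here is that the first projection $\HH \to \BT_1$ is surjective (as noted in the remark following the definition of $\HH$), so as $(u,v)$ ranges over $\HH$, the element $u$ ranges over all of $\BT_1$. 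Hence $x \in \langle\langle L_1\rangle\rangle_{\BT_1}$, which gives $x_0 \in \langle\langle L_1\rangle\rangle_{\hat\BT_1} = E(R_1, L_1)$. The symmetric argument, applied with $\sN_2$ in place of $\sN_1$ (also a normal generating set for $\Tors(\HH)$ by the analogue of lemma \ref{Le6} for $\sN_2$), yields $y_0 \in E(R_2, L_2)$. Thus $\ker\Phi \subset E(R_1,L_1) \times E(R_2,L_2)$.

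Setting $E := \ker\Phi$ produces the exact sequence (\ref{ee}). Finiteness of $E$ follows from proposition \ref{Pro6}: by construction of $\sN_1, \sN_2$, each $L_i$ is a finite set of finite-order elements of the orbifold surface group $\BT_i$, and $R_i$ has finite index in $\BT_i$, so each $E(R_i, L_i) = \langle\langle L_i\rangle\rangle_{\hat\BT_i}$ is finite, whence so is their product. I expect the only genuinely substantive step to be the projection argument in the kernel analysis: one must carefully justify that $\HH$-conjugation, once projected to $\BT_1$, realizes arbitrary $\BT_1$-conjugation of elements of $L_1$, and this is precisely where the surjectivity of $\HH \to \BT_1$ (equivalently, the surjectivity of $\varphi_1$ and $\varphi_2$) is indispensable. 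Everything else is formal bookkeeping built on the infrastructure already assembled in this section.
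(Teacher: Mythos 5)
Your proof is correct and follows the same route as the paper's (which disposes of the kernel statement in one line, ``$x_0\in E(R_1,L_1)$ by the definition of $E(R_1,L_1)$''). Your version merely makes explicit the step the paper leaves implicit — that lemma \ref{Le6} together with the surjectivity of the projection $\HH\to\BT_1$ puts $x$ into $\langle\langle L_1\rangle\rangle_{\BT_1}$, hence $x_0$ into $E(R_1,L_1)$ — and correctly sources finiteness from proposition \ref{Pro6}.
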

\Proof
$\Phi$ is clearly surjective. Let $(x_0,y_0) \in \ker(\Phi)$. Then
$(x,y) \in  {\Tors}(\HH)$, hence $x_0$ is an element of $E(R_1,L_1)$ 
by the definition of
$E(R_1,L_1)$.
Analogously,
$y_0
\in E(R_2,L_2)$.

\qed

     We are now ready to prove theorem (\ref{strfund}).

The quotient homomorphisms $\hat \BT_1\to \hat \BT_1/\langle \langle L_1\rangle
\rangle_{\hat
\BT_1}
\cong
\BT_1/\langle \langle L_1\rangle \rangle_{\BT_1}$, $\hat \BT_2\to
\BT_2/\langle \langle
L_2
\rangle
\rangle_{\BT_2}$ both have finite kernel. They may be put together to give a
a homomorphism
\begin{equation*}
\Psi\colon \hat \BT_1\times \hat \BT_2\to \BT_1/\langle \langle L_1\rangle
\rangle_{\BT_1} \times
\BT_2/\langle \langle L_2\rangle \rangle_{\BT_2},
\end{equation*} again with finite kernel. Notice that
\begin{equation}\label{ent} E\le {\rm ker}(\Psi)
\end{equation} where $E$ is the kernel of $\Phi$, as  in (\ref{ee}).

We need the following:

\begin{rem} The subgroup
$${\mathbf H}:=\Psi(\hat{\HH})\le
\BT_1/\langle \langle L_1\rangle\rangle_{\BT_1} \times \BT_2/\langle \langle
L_2\rangle \rangle_{\BT_2}$$
is of finite index.
\end{rem}

Hence, we obtain an exact sequence
\begin{equation} 1 \to E_1\to \hat{\HH} \to
\mathbf H
\to 1,
\end{equation}
where $E_1$ is finite.

By (\ref{ent}) $E \leq E_1$, and by (\ref{ee}) we obtain a commutative
diagram with exact rows and columns

$$
\xymatrix{
&1\ar[d]&1\ar[d]&&\\
&E\ar[d]\ar@{=}[r]&E\ar[d]&&\\
1\ar[r]&E_1\ar[d]\ar[r]&\hat{\HH}\ar[d]\ar[r]&\mathbf H\ar@{=}[d]\ar[r]&1\\
1\ar[r]&E_2\ar[d]\ar[r]&\HH/\Tors(\HH)\ar[d]\ar[r]&\mathbf H\ar[r]&1\\
&1&1&&.
}
$$

Therefore we have
\begin{equation}\label{exa} 1 \to E_2 \to   {\HH} / {\Tors}({\HH})\to \mathbf H
\to  1
\end{equation}

where  $E_2$ is finite.

Moreover, $\mathbf H$, as a subgroup of finite index in the direct product
$\BT_1/\langle \langle L_1\rangle \rangle_{\BT_1} \times \BT_2/\langle
\langle L_2\rangle\rangle_{\BT_2}$ of orbifold surface groups, is 
residually finite.

\noindent But this property is not sufficient for our purposes. We
need the following
notion of {\em cohomological goodness} which was introduced by
J.P. Serre in \cite{Serre} and is very important for the comparison of a group
with its profinite completion.

\begin{defin}
     Let $H$ be a group and let $\hat{H}$ be its profinite completion.
The group $H$ is
called {\em good}, if, for each $k \in \mathbb{N}, k \geq 0$, and for
each finite $H$-module
$M$ the natural homomorphism of cohomology groups (induced by the
homomorphism $H
\rightarrow
\hat{H}$)
$$ H^k(\hat{H}, M) \rightarrow H^k(H,M)
$$ is an isomorphism.
\end{defin}

In \cite{GZ} it is shown that a direct product of good groups is good,
that a subgroup of finite index of a good group is again good, whence it
follows that $\mathbf H$ is
also good (since orbifold surface groups are good). This allows us to apply the
following result.

\begin{prop} (\cite[Prop. 6.1]{GZ}).
     Let $F$ be a residually finite, good group, and let $\phi \colon H
\rightarrow F$ be a
surjective homomorphism with finite kernel $K$. Then $H$ is residually finite.
\end{prop}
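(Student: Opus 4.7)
The plan is to detect each nontrivial element $h \in H$ in some finite quotient of $H$. If $\phi(h) \neq 1$, residual finiteness of $F$ produces a finite quotient of $F$ separating $\phi(h)$ from $1$, and pullback along $\phi$ does the job. The substantive case is $h \in K \setminus \{1\}$.

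First I would reduce to a central extension. Conjugation of $H$ on the finite group $K$ has kernel $H_0 := C_H(K)$, a normal subgroup of finite index in $H$, and $K \cap H_0 = Z(K)$. Setting $F_0 := \phi(H_0)$, which has finite index in $F$ (and is therefore still residually finite and, since goodness passes to finite-index subgroups by \cite{GZ}, still good), we obtain a central extension
$$1 \to Z(K) \to H_0 \to F_0 \to 1.$$
For $h \in K \setminus Z(K)$ the image of $h$ in the finite group $H/H_0 \hookrightarrow \Aut(K)$ is already nontrivial, so the remaining case is $h \in Z(K) \setminus \{1\}$, and it suffices to show that $H_0$ is residually finite: a standard core argument (the core in $H$ of a finite-index normal subgroup of $H_0$ avoiding $h$ still avoids $h$ and has finite index in $H$) then upgrades residual finiteness from $H_0$ to $H$.

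Next I would transfer the extension class to the profinite completion using goodness. The above central extension is classified by some $\alpha \in H^2(F_0, Z(K))$, with $Z(K)$ a finite trivial module. Goodness of $F_0$ gives an isomorphism $H^2(\hat F_0, Z(K)) \xrightarrow{\sim} H^2(F_0, Z(K))$, so $\alpha$ lifts to a class $\hat\alpha$ classifying a continuous central extension
$$1 \to Z(K) \to E \to \hat F_0 \to 1,$$
in which the total space $E$ is an extension of a profinite group by a finite group, hence profinite. Naturality of $H^2$ under the canonical embedding $\iota \colon F_0 \hookrightarrow \hat F_0$ (injective by residual finiteness of $F_0$) identifies $H_0$ with the pullback $F_0 \times_{\hat F_0} E$. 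The projection $H_0 \to E$ therefore has kernel $\ker(\iota) = \{1\}$, so $H_0$ embeds into the profinite, a fortiori residually finite, group $E$. Thus $H_0$ is residually finite, and hence so is $H$.

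The step I expect to require the most care is the interplay between the abstract cohomology of the discrete group $F_0$ and the continuous cohomology of the profinite group $\hat F_0$ implicit in the goodness isomorphism: one really needs $\hat\alpha$ to classify a \emph{profinite} extension of $\hat F_0$ (so that $E$ is profinite), not merely an abstract extension of the underlying abstract group. Once this identification is made carefully, the remaining pieces -- the pullback identification $H_0 \cong F_0 \times_{\hat F_0} E$, the injectivity of the projection to $E$, and the descent of residual finiteness from $H_0$ to $H$ -- are formal.
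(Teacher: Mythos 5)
Your argument is correct. Note, however, that the paper itself offers no proof of this statement: it is quoted verbatim from \cite[Prop.\ 6.1]{GZ}, so there is nothing in the text to compare against. Your proof is the standard one (and, as far as I can tell, the one in the cited source): reduce to the central extension $1 \to Z(K) \to C_H(K) \to \phi(C_H(K)) \to 1$ using that $H/C_H(K)$ embeds in the finite group $\Aut(K)$, invoke goodness to lift the extension class to the continuous cohomology of the profinite completion, and embed $C_H(K)$ into the resulting profinite extension via the fibre-product description; the only delicate point, which you correctly flag, is that a class in $H^2(\hat F_0, Z(K))$ with $Z(K)$ finite classifies a genuinely profinite extension, which is standard. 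One small simplification: once $C_H(K)$ is shown to be residually finite, the finite-index (core) argument already gives residual finiteness of all of $H$, so the preliminary case distinction on $\phi(h)\neq 1$ and $h\in K\setminus Z(K)$ is not needed, though it does no harm.
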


This implies that ${\HH} / {\Tors}({\HH})$ is
residually finite. Therefore (and because $E_2$ is finite) there is a
normal subgroup of
finite index
$$\Gamma\lhd {\HH} / {\Tors}({\HH})$$
such that $\Gamma\cap E_2=\{1\}$.

\noindent Let $\Psi_1\colon {\HH} / {\Tors}({\HH})\to \mathbf{H}$ be 
the surjective
homomorphism in the
exact sequence (\ref{exa}). Then $\Psi_1 | \Gamma$ is injective and clearly
$\Psi_1(\Gamma)$ has finite index in $\mathbf{H}$, which in turn  is
a subgroup of finite
index in a direct product of orbifold surface  groups.

\noindent From the general properties of orbifold surface groups, we
find a normal
subgroup
$\mathbf{H}_1\lhd \mathbf{H}$ of finite index, such that 
$\mathbf{H}_1$ is contained in
$\Psi_1(\Gamma)$ and isomorphic to the direct  product of two surface groups.

\noindent We set $\Gamma_1 := \Psi_1^{-1}(\mathbf{H}_1) \cap \Gamma$.
Then $\Gamma_1$
satisfies:

\begin{itemize}
\item[i)] $\Gamma_1$ is a normal subgroup of finite index in
${\HH} /  {\Tors}({\HH})$,
\item[ii)]  $\Gamma_1$ is isomorphic to the direct  product of two
surface groups.
\end{itemize} This proves theorem (\ref{strfund}).

\qed

We want to explain briefly the geometry underlying the above structure theorem
for the fundamental group, and its significance for the investigation 
of the moduli
space of the product-quotient surfaces.

\smallskip

We have the quotient map $ C_1 \times C_2   \ra X = (C_1 \times C_2 ) / G$,
and the minimal model $S$ of $X$ is such that $\pi_1(X) \cong \pi_1(S)$.

Let $\hat{X}$ be the unramified covering of $X$ corresponding to the
normal subgroup
$\sN$ of $\pi_1(X)$ of finite index such that
$$ \pi_1( \hat{X} ) = \sN \cong  \Pi_{h_1} \times \Pi_{h_2}.$$

The fibre product  $\hat{X} \times_X (C_1 \times C_2 )$ is an unramified Galois
covering of $(C_1 \times C_2 ) $ with group $ G' := \pi_1(X) / \sN $, 
which is connected if
$\pi_1 (C_1 \times C_2 ) \cong  \Pi_{g_1} \times \Pi_{g_2}$
surjects onto $\pi_1(X)$. Let $Y$ be a connected component of 
$\hat{X} \times_X (C_1 \times C_2 )$:
it is a Galois covering of  $(C_1 \times C_2 ) $ with Galois group 
$G''$, the subgroup
of $G'$ stabilizing $Y$.

In turn the surjection  $ \psi \colon \Pi_{g_1} \times \Pi_{g_2} \ra 
G''$ determines
on the one hand two normal
subgroups $ G''_i : = \psi (\Pi_{g_i} )$ of $G''$ and two Galois 
unramified coverings
$$ C''_i \ra C_i = (C''_i  )/ G''_i .$$

On the other hand it shows that each element of $G''_1$ commutes with
every element of $G''_2$, and  $G''_1$, $G''_2$ generate $G''$.

Hence, setting $ G''' : = G''_1 \cap G''_2 $, we have a central extension:
$$ 1 \ra G''' \ra G'' \ra (G''_1 / G''') \times (G''_2 / G''')  \ra 1  .$$

We analyse  now the surjective morphism
$(C''_1 \times C''_2)  \ra \hat{X} $,  in order to derive some
interesting consequences.

\medskip

\underline{\bf Case ++:} assume here that $h_1, h_2 \geq 2$.

\medskip

Then, by the extension of the Siu-Beauville theorem given in \cite{fibred},
each  surjection $ \sN \ra  \Pi_{h_j}$ is given by
  a surjective  holomorphic map with connected fibres $ \varphi_j 
\colon \hat{X} \ra C'''_j  $,
  thus we get a surjective  holomorphic map
$ \varphi \colon \hat{X} \ra (C'''_1 \times C'''_2) $,
where $C'''_i$ has genus $h_i$, and
$ \varphi _*$ induces the above isomorphism $ \sN \cong  \Pi_{h_1} 
\times \Pi_{h_2}$.

The first geometrical consequence is that, since the holomorphic map
$f \colon (C''_1 \times C''_2) \ra (C'''_1 \times C'''_2)$ is by the 
rigidity lemma (see \cite{FabIso})
  necessarily of product type (w.l.o.g., $f(x_1,x_2) = (f_1 (x_1), f_2(x_2))$),
the foliations on $\hat{X}$ induced by the fibrations of $X$ onto 
$C_1 / G$ and  $C_2 / G$
are the same as the ones induced by the maps of $\hat{X}$ onto $C'''_1$
and $C'''_2$.

Moreover, $G'$ acts on $\hat{X}$, hence also on its Albanese variety, which
is the product $\Alb(C'''_1) \times \Alb(C'''_2) $. Thus $G'$ acts 
also on each $C'''_i$,
in particular  $C'''_i / G'$ maps onto $C_i / G$, and in view of the 
connectedness of the general fibre
we get that  $C'''_i / G' \cong C_i / G$.

Since the unramified $G'$-cover $\hat{X}$ and $ \varphi \colon 
\hat{X} \ra (C'''_1 \times C'''_2) $
are dictated by the fundamental groups, a natural way to study the 
deformations of $X$ is to study
the deformations of the morphism $ \varphi $ which are $G'$-equivariant.

\underline{\bf  Case +0:} assume here that $h_1  \geq 2$, $h_2 = 1$.

Then we have $\varphi_1 \colon \hat{X} \ra C'''_1  $ such that $ 
{\varphi_1} _*$ induces the above
epimorphism $ \sN \cong  \Pi_{h_1} $.

The morphism $\Phi_1 \colon \hat{X} \ra \Alb(C'''_1) : = J$  factors 
, by the universal property of the
Albanese map, through the Albanese map $\alpha \colon \hat{X} \ra A : 
= \Alb (\hat{X})$, hence
$A$ is isogenous to a product of $J$ with an elliptic curve.

Since however $ H_1 (\hat{X}, \ZZ) = H_1 (C'''_1, \ZZ) \times \ZZ^2$, 
we conclude
that  $$\Alb (\hat{X}) = J \times  C'''_2,$$
where $ C'''_2$ is an elliptic curve.

  Hence we get again a morphism
$ \varphi \colon \hat{X} \ra (C'''_1 \times C'''_2)$ such that
$\varphi _*$ induces the above isomorphism $ \sN \cong  \Pi_{h_1} 
\times \Pi_{h_2}$,
and we can argue  exactly as before.

\smallskip

\underline{\bf  Case 00:} assume here that $h_1= h_2 = 1$.

Then the Albanese variety $A : = \Alb (\hat{X})$ is a surface, and we 
get a finite morphism
$f \colon (C''_1 \times C''_2) \ra A$.

Consider the induced injective homomorphism
$$ f^* \colon H^0 (\Omega^1_A) \ra  H^0 (\Omega^1_{C''_1 \times
C''_2} ) = H^0 (\Omega^1_{C''_1 } ) \oplus H^0 (\Omega^1_{C''_2} ).$$

This is compatible with wedge product, and since the image of  $H^0 
(\Omega^1_A)$
inside each $H^0 (\Omega^1_{C''_j } )$ is an isotropic subspace, it follows
that $H^0 (\Omega^1_A)$ splits as $\CC \omega_1 \oplus \CC \omega_2$
where, for $i\neq j$, $ f^* ( \omega_i) $ maps to zero in $H^0 
(\Omega^1_{C''_j })$.

We conclude that $A$ is isogenous to a product of elliptic curves, 
and again looking at
  $H_1 (\hat{X}, \ZZ)$ we derive that $A$ is a product of elliptic curves,
thereby showing that also in this case we get a morphism
$\varphi \colon \hat{X} \ra (C'''_1 \times C'''_2)$ to which we can 
apply the previous analysis.

We do not apply these considerations here
in order to study the  deformations of the surfaces $X$ that we construct.

The situation  can become rather involved, since $\varphi $ is not 
necessarily Galois.
We defer to \cite{keumnaie} for an interesting example, where 
$\varphi $ has degree 2.

In the case where $h_1 \neq 0$, $h_2 = 0$, we only have   $\varphi_1 
\colon \hat{X} \ra C'''_1 $
at our disposal, and the analysis seems much more difficult to handle.

\medskip

\section{The classification of standard isotrivial fibrations with
$p_g = q = 0$ where $X = (C_1 \times C_2)/G$ has rational double points}

In this section we will give a complete classification of the surfaces $S$
occurring as the minimal resolution of the singularities of a surface
$X:= (C_1
\times C_2)/G$, where $G$ is a finite group with an unmixed action on
a product of smooth
projective curves $C_1
\times C_2$ of respective genera $g_1, g_2 \geq 2$, and  such that
\begin{itemize}
\item[i)] $X$ has only {\em rational double points} as singularities,
\item[ii)] $p_g(S)=q(S)=0$.
\end{itemize}

     We denote by $\rho \colon S \rightarrow X$ the minimal resolution of
the singularities.

\begin{rem} 1) Note that the assumption $g_i \geq 2$ is equivalent to
$S$ being of
general type. In fact, $K_S=\rho^*K_X$ is nef, and
$K_S^2=K_X^2=\frac{8(g_1-1)(g_2-1)}{|G|}>0$, whence $S$ is  a minimal
surface of general
type.

\noindent 2) The analogous situations, with the assumption ii) replaced by
$p_g=q=1$, resp. $p_g=q=2$, have been classified by F. Polizzi in
\cite{pol}, resp. by M. Penegini in \cite{penego}.

\noindent 3) Recall that a cyclic quotient singularity is a
rational double
point if and only if it is of type $A_n$, for $n \in \mathbb{N}$.
\end{rem}

\subsection{The singularities of $X$} We shall show in the sequel
that, under the above hypotheses, $X$ has only ordinary
double points (i.e., singularities of type $A_1$). More precisely, we
shall derive the following improvement of
\cite[Prop. 4.1]{pol}.
\begin{prop}\label{onlynodes} Assume that $X:=C_1 \times C_2/G$ has
only rational double
points as singularities and that $\chi(\hol_X)=1$.

\noindent Then $X$ has $t:=8-K^2_X$ ordinary double points as
singularities. Moreover, $t\in\{0,2,4,6\}$.
\end{prop}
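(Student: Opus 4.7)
The natural starting point is to combine Noether's formula for the minimal resolution $S$ with Proposition~\ref{euler}. Since $X$ has only rational double points, $\chi(\mathcal{O}_S) = \chi(\mathcal{O}_X) = 1$ and $K_S^2 = K_X^2$, so Noether reads $K_X^2 + e(S) = 12$. Writing the singularities of $X$ as $A_{m_i-1}$ for $i = 1,\ldots,k$ (cyclic stabilizer of order $m_i$, resolution tree of length $l_i = m_i - 1$), Proposition~\ref{euler} yields $e(S) = K_X^2/2 + \sum_i (m_i^2-1)/m_i$, whence the key identity
\begin{equation*}
\frac{3\,K_X^2}{2} + \sum_{i=1}^k \frac{m_i^2-1}{m_i} = 12. \tag{$\star$}
\end{equation*}

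Since $K_X^2$ is a positive integer and each summand is $\geq 3/2$, equation $(\star)$ has only finitely many solutions $\{m_i\}$. My next step would be to enumerate them: the ``all-$A_1$'' baskets ($m_i \equiv 2$) give $k = 8 - K_X^2$ for any $K_X^2 \in \{1,\ldots,8\}$, and a short arithmetic check shows that the only other baskets compatible with $(\star)$ involve exactly two $A_3$ singularities (exploiting the coincidence $2\cdot 15/4 = 5\cdot 3/2 = 15/2$), namely $2\,A_3 + 2\,A_1$ with $K_X^2 = 1$, $2\,A_3 + A_1$ with $K_X^2 = 2$, and $2\,A_3$ with $K_X^2 = 3$. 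The main technical step is to rule out these exceptional baskets. For this I would combine the further constraints: $q(S) = 0$ forces $g'_1 = g'_2 = 0$, so that both $\BT_i$ are genuine polygonal groups with signatures restricted by Riemann-Hurwitz; and an $A_3$ singularity requires an element of order $4$ in $G$ acting by a pair of inverse characters on $T_{x_1}C_1 \oplus T_{x_2}C_2$. A case-by-case inspection of this short list, together with the constraint $K_X^2 = 8(g_1-1)(g_2-1)/|G|$, should eliminate each exceptional basket; this is where the main obstacle lies.

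Once only $A_1$ singularities remain, $(\star)$ immediately yields $t = 8 - K_X^2$. To finish I need $t \in \{0,2,4,6\}$, i.e.\ $K_X^2$ even. I would deduce this from the parity observation that every involution of a curve of genus $\geq 2$ has an even number of fixed points (Riemann-Hurwitz), so the preimage $p^{-1}(\mathrm{Sing}(X)) \subset C_1 \times C_2$ has cardinality divisible by $4$; combined with the orbit-size identity $|p^{-1}(\mathrm{Sing}(X))| = t\,|G|/2$ and a small tracking of $|G|$ modulo $4$ coming from the integrality of $K_X^2 = 8(g_1-1)(g_2-1)/|G|$, this should yield the required parity.
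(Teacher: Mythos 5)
Your identity $(\star)$ is correct, and using Noether's formula together with Proposition~\ref{euler} to enumerate the possible baskets is a legitimate, self-contained substitute for the paper's citation of \cite[Prop.~4.1]{pol}: it does pin the singularities down to either $k=8-K_X^2$ nodes or two $A_3$'s plus $j$ nodes with $K_X^2+j=3$. The first genuine gap is the parity argument. From ``every involution of a curve has an even number of fixed points'' you get that $|p^{-1}(\Sing X)|=\sum_\sigma |\mathrm{Fix}_{C_1}(\sigma)|\cdot|\mathrm{Fix}_{C_2}(\sigma)|$ is divisible by $4$, hence $t\,|G|\equiv 0 \pmod 8$ --- and this is vacuous whenever $8$ divides $|G|$, which happens in most of the actual examples ($D_4\times\ZZ_2$ and $\ZZ_4^2$ of order $16$, $\mathfrak S_4\times\ZZ_2$ of order $48$, $PSL(2,7)$ of order $168$, \dots). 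The integrality of $K_X^2=8(g_1-1)(g_2-1)/|G|$ puts no constraint on $|G|\bmod 8$, so there is nothing to ``track''. The paper obtains the parity from Lemma~\ref{ftildesquare} instead: the strict transform $\tilde F$ of a reduced fibre of $X\to C_1/G$ is a divisor on the smooth surface $S$, so $-\tilde F^2=\sum_{p\in F}\bigl(1-\frac1{n(p)+1}\bigr)$ must be an integer; for nodes each summand is $\frac12$, so every fibre contains an even number of nodes and $t$ is even. You need this lemma (or something equivalent); your congruence does not deliver the conclusion.

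The same fibre-integrality immediately disposes of two of your three exceptional baskets ($2A_3$ gives $\frac34+\frac34=\frac32\notin\ZZ$, and $2A_3+2A_1$ gives $\frac52$), so the only surviving one is $2A_3+A_1$ with $K_X^2=2$. You are right that excluding it is ``the main obstacle'', and your proposed case-by-case inspection of signatures is not a proof --- but you should know that the paper does not close this gap either: what is actually proved here is only the weaker Corollary~\ref{onlynodesoracase}, and the non-existence of the $2A_3+A_1$ configuration is deferred to the forthcoming paper \cite{bp}. So your attempt stalls at exactly the point where the paper itself resorts to an external reference, but it also has the independent, fixable flaw in the parity step described above.
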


We use the following lemma (cf. \cite[Prop. 2.8]{pol2}).

\begin{lemma}\label{ftildesquare} Let $C_1$, $C_2$ be two compact
Riemann surfaces, and let $G$ be a finite group with an unmixed action
on $C_1 \times C_2$. Consider the surface $X:=C_1 \times C_2/G$, and
its minimal resolution of the singularities $\rho\colon S \rightarrow X$.

\noindent Let $F$ be a reduced fibre of the natural map $p_1\colon
X \rightarrow C_1/G$ as a Weil divisor, and let
$\tilde{F}:=\rho_*^{-1}F$ be its strict transform.

\noindent Assume that each singular point $p$ of $X$ on $F$ is of type
$A_{n(p)}$.

\noindent Then $$-\tilde{F}^2=\sum_{p\in F} \left(1-\frac{1}{n(p)+1}\right).$$
\end{lemma}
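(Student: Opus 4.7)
The plan is to exploit Mumford's $\QQ$-valued intersection theory on the normal surface $X$: write $\rho^*F=\tilde F+\sum_p D_p$, where the correction $D_p$ is a $\QQ$-linear combination supported on the exceptional configuration over the singular point $p\in F$, with coefficients uniquely determined by the orthogonality conditions $\rho^*F\cdot E_{p,j}=0$ for every exceptional curve $E_{p,j}$. Since $F$ is a positive $\QQ$-multiple of the Cartier fibre $p_1^{*}(\bar q_1)$ of $p_1\colon X\to C_1/G$, one has $F^2=0$ on $X$ and consequently $(\rho^*F)^2=0$ on $S$.

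Next I would perform the local analysis at a singular point $p\in F$ of type $A_n$, with $n=n(p)$. Picking a preimage $(q_1,q_2)\in C_1\times C_2$ of $p$ with stabiliser $H\cong\ZZ/(n+1)$, the RDP hypothesis forces a generator of $H$ to act by $(x,y)\mapsto(\zeta x,\zeta^{-1}y)$ in suitable local coordinates, with $x$ a local parameter on $C_1$ at $q_1$ and $y$ on $C_2$ at $q_2$. Since $p_1$ is induced by projection onto $C_1$, the reduced fibre $F$ is locally the image of $\{x=0\}$ in $\CC^2/H$. The well-known description of the minimal resolution of an $A_n$-singularity then shows that $\tilde F$ meets one end of the chain $E_{p,1},\dots,E_{p,n}$ of exceptional $(-2)$-curves, transversely in one point; relabelling so that $\tilde F\cdot E_{p,n}=1$ and $\tilde F\cdot E_{p,j}=0$ for $j<n$, the linear system $\rho^*F\cdot E_{p,j}=0$ admits the unique solution $a_{p,j}=j/(n+1)$, so that $D_p=\sum_{j=1}^{n(p)}\frac{j}{n(p)+1}\,E_{p,j}$.

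Finally, I would conclude by expanding $(\rho^*F)^2=0$. The relation $\rho^*F\cdot E_{p,j}=0$ gives $\tilde F\cdot D_p=-D_p^2$; since exceptional configurations over distinct singular points are disjoint, there are no cross terms, and
$$0=(\rho^*F)^2=\tilde F^2+2\sum_p\tilde F\cdot D_p+\sum_p D_p^2=\tilde F^2-\sum_p D_p^2.$$
At each $p$ the same orthogonality relation yields
$$D_p^2=\sum_j a_{p,j}\Bigl(\sum_k a_{p,k}E_{p,k}\cdot E_{p,j}\Bigr)=-a_{p,n(p)}\cdot(\tilde F\cdot E_{p,n(p)})=-\frac{n(p)}{n(p)+1},$$
whence $-\tilde F^2=\sum_{p\in F}\bigl(1-1/(n(p)+1)\bigr)$, as desired.

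The one non-formal step, and therefore the main potential obstacle, is the local identification of $F$ at $p$ with the image of a coordinate axis of $\CC^2/H$, together with the fact that $\tilde F$ meets only a single end of the $A_{n(p)}$-chain (rather than both ends, as would happen for a generic Cartier divisor through $p$). Once this geometric input is in place, the rest of the argument is the standard Hirzebruch--Jung bookkeeping for $\QQ$-pull-backs of Weil divisors.
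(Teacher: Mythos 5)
Your argument is correct and follows essentially the same route as the paper's proof: both write the pullback of the fibre as $\tilde F$ plus an exceptional correction with coefficients $j/(n(p)+1)$ forced by orthogonality to the $(-2)$-chains, and both conclude from the numerical triviality of the full fibre class. The only (minor) difference is that where you justify the key geometric input --- that $\tilde F$ meets each $A_{n(p)}$-chain transversally in a single end component --- by the local model $(x,y)\mapsto(\zeta x,\zeta^{-1}y)$ of the stabilizer and the Hirzebruch--Jung resolution of the image of the axis $\{x=0\}$, the paper instead invokes Serrano's theorem on isotrivially fibred surfaces.
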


\Proof Set $q:=p_1(F)$. Then $\rho^*p_1^*(q)=k\tilde{F}+\sum_{E\in
Exc(\rho)} m_E E$ for some $k,m_E \in \NN$.

By assumption, for each $p \in F \cap \Sing(X)$, the exceptional divisor of
$\rho$ mapping to $p$ is union of $n(p)$ rational curves, say
$E_1,\ldots,E_{n(p)}$, with $E_i^2=-2$, $E_iE_{i+1}=1$, and $E_i \cap
E_j=\emptyset$ if $|i-j|>1$. By Serrano's theorem
(cf. \cite[(2.1)]{serrano}), we can assume $\tilde{F}E_i=0$ for each
$i<n(p)$ and
$\tilde{F}E_{n(p)}=1$.

Since $E_i \cdot \rho^*p_1^*(q)=0$, it follows that
$m_{E_i}=\frac{ik}{n(p)+1}$. Therefore
\begin{multline*}
  -\tilde{F}^2=\frac1k \tilde{F}(\rho^*p_1^*(q)-k\tilde{F})=\\
=\frac1k \tilde{F}\left(\sum_{E\in Exc(\rho)}m_E E\right)=
\frac1k \tilde{F}\left(\sum_{p \in F}\frac{n(p)k}{n(p)+1}\right).
\end{multline*}

\qed

It follows
\begin{cor}\label{onlynodesoracase} Assume that $X:=C_1 \times C_2/G$ has
only rational double
points as singularities and that $\chi(\hol_X)=1$.

\noindent Then
\begin{itemize}
\item[i)] either $X$ has $t:=8-K^2_X\in\{0,2,4,6\}$ ordinary double points as
singularities,
\item[ii)] or $X$ has three singular points, one of type $A_1$ and two
of type $A_3$.
\end{itemize}
\end{cor}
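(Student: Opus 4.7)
The plan is to derive a numerical identity relating $K_X^2$ to the Dynkin indices of the singularities, and then combine it with the integrality constraints provided by Lemma \ref{ftildesquare} for the two induced fibrations. Since rational double points do not affect $\chi(\mathcal{O})$ and satisfy $K_X^2 = K_S^2$, Noether's formula on $S$ gives $K_X^2 + e(S) = 12$. Writing the singularities as $A_{n_1},\ldots,A_{n_k}$, one has $l_i = n_i$ and the stabilizer has order $n_i+1$, so Proposition \ref{euler} yields
$$e(S) = \tfrac{1}{2}K_X^2 - \sum_{i=1}^k \frac{1}{n_i+1} + \sum_{i=1}^k (n_i+1),$$
which combined with Noether produces the key identity
$$\tfrac{3}{2}K_X^2 + \sum_{i=1}^k \phi(n_i) = 12, \qquad \phi(n):= n+1-\tfrac{1}{n+1}.$$
Since each $\phi(n_i) \geq 3/2$ and $K_X^2$ is a positive integer, this leaves finitely many admissible multisets $\{n_i\}$ for $K_X^2 \in \{1,\ldots,8\}$.

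Next, from $p_g(S) = q(S) = 0$ and $\chi(\mathcal{O}_X) = 1$ one has $q(X) = 0$, so $C_j/G \cong \PP^1$ and we have two induced fibrations $p_j\colon X \to \PP^1$. Applying Lemma \ref{ftildesquare} to any reduced fiber $F$ of either $p_j$ gives
$$\sum_{p \in F \cap \Sing(X)}\Bigl(1 - \frac{1}{n(p)+1}\Bigr) = -\tilde F^2 \in \ZZ_{\geq 0},$$
so the partial sum of $n(p)/(n(p)+1)$ over the singularities on each fiber of $p_1$ and of $p_2$ must be a nonnegative integer. This is a strong distributional constraint on the $n_i$.

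The conclusion follows from a finite case-by-case inspection. For $K_X^2 \in \{1,3,5,7\}$ every multiset solving the key identity produces, under any grouping of singularities into fibers of $p_1$, at least one fiber whose contribution is not an integer, so these values are excluded. For $K_X^2 \in \{4,6,8\}$ the only admissible solution consists of $t = 8 - K_X^2$ nodes, which can always be distributed in pairs along the fibers, giving case (i). For $K_X^2 = 2$ the identity admits exactly two solutions: six $A_1$'s (case (i) with $t = 6$), and one $A_1$ together with two $A_3$'s, which satisfies the integrality test precisely when all three singularities lie on one common fiber of $p_1$ and on one common fiber of $p_2$, since $\tfrac12+\tfrac34+\tfrac34=2$; this gives case (ii).

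The main obstacle is precisely this last step: one must verify that in each excluded case every possible grouping of singularities into fibers violates the integrality constraint. The check is small and finite, however, since the total number of singularities is bounded by $7$ via the key identity, and for each fixed $K_X^2$ only a handful of multisets $\{n_i\}$ satisfy $\sum\phi(n_i) = 12 - \tfrac{3}{2}K_X^2$, each of which is easily tested by enumerating the possible fiber distributions.
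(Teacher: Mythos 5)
Your proposal is correct, and it takes a genuinely different route from the paper at the decisive step. The paper gets the coarse restriction on the singularity configuration (``only nodes, or two $A_3$'s plus at most two nodes'') by citing Polizzi's Proposition 4.1, and then only needs the integrality statement $\sum_{p\in \Sing X}\frac{1}{n(p)+1}\in\ZZ$ from Lemma \ref{ftildesquare} to fix the parity, plus Noether's formula (via Corollary \ref{eulernodes}) to get $t=8-K^2\le 6$. You instead make the argument self-contained: combining Proposition \ref{euler} with Noether's formula yields your key identity $\frac{3}{2}K_X^2+\sum_i\bigl((n_i+1)-\frac{1}{n_i+1}\bigr)=12$, and the whole classification reduces to a small Diophantine enumeration. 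Your sketched case analysis does check out, and can even be streamlined: writing $m_i=n_i+1$, the identity forces $\sum_i \frac{1}{m_i}\in\ZZ+\frac{K^2}{2}$, while Lemma \ref{ftildesquare} summed over all fibres forces $\sum_i\frac{1}{m_i}\in\ZZ$, so odd $K^2$ is excluded outright; for even $K^2$ one solves $\sum m_i=12-\frac{3}{2}K^2+s$ with $s=\sum\frac{1}{m_i}\in\{1,\dots,\lfloor k/2\rfloor\}$, and the AM--HM bound $\sum m_i\ge k^2/s$ leaves only the all-nodes solution for $K^2=4,6,8$ and exactly the two configurations $\{2^6\}$ and $\{2,4,4\}$ for $K^2=2$, i.e.\ cases (i) and (ii). What your approach buys is independence from the external reference \cite{pol} and a transparent reason why only these configurations survive; what the paper's approach buys is brevity. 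Your additional observation that in case (ii) the three singular points must lie on a single fibre of each projection is correct but not needed for the corollary as stated.
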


\Proof
By lemma \ref{ftildesquare}, considering all the fibres through the singular
points,
$\sum_{p\in Sing X} \left( 1-\frac1{n+1} \right) \in \NN,
$ or, equivalently
\begin{equation}\label{n/n+1inZ}
\sum_{p\in Sing X}  \frac1{n+1} \in \NN
\end{equation}

By \cite[prop. 4.1]{pol}, either
\begin{itemize}
\item[i)] $X$ has only nodes, or
\item[ii)] $X$ has two $A_3$ singularities, and at most two nodes.
\end{itemize}
Then formula (\ref{n/n+1inZ}) shows that in case i) the number of nodes is even
whereas in case ii) it is odd.

Note that (cf. \ref{eulernodes})
\begin{equation*} K^2_S =
K^2_X=\frac{8(g(C_1)-1)(g(C_2)-1)}{|G|},  \ \ \
e(S)=\frac{K_S^2+3t}2.
\end{equation*}
By Noether's formula
$$1=\chi(\hol_S)=
\frac{1}{12}\left(K_S^2+e(S)\right)=
\frac{3K_S^2+3t}{12 \cdot 2}=\frac{K_S^2+t}{8},
$$ and it follows that $K_S^2=8-t$. Since
$K_S^2>0$, $t\leq 6$.

\qed

\begin{rem}
1) In a forthcoming paper (\cite{bp}) it is shown that case ii) of
corollary \ref{onlynodesoracase} do not occur.

2) The case $t=0$ (i.e.,  $G$ acts freely on $C_1 \times C_2$) has been
completely classified in \cite{bcg}. Therefore in what follows we
shall assume $t>0$.
\end{rem}

\subsection{The signatures} We know that $X$ determines the following data
\begin{itemize}
\item[-] a finite group $G$,
\item[-] two orbifold surface groups $\BT_1:=\BT(g'_1;m_1,\ldots, m_r)$,
$\BT_2:=\BT(g'_2;n_1,\ldots, n_s)$,
\item[-] two appropriate homomorphisms $\varphi_i \colon \BT_i \rightarrow
G$, such that the
stabilizers fulfil certain conditions ensuring that $X$ has only
$2,4$ or $6$ nodes
respectively.
\end{itemize} The assumption $q=0$ implies that $C_i/G$ is rational,
i.e., $g'_1 = g'_2 =
0$, whence $\BT_i$ is indeed a polygonal group for $ i=1,2$.

\medskip\noindent We will use the combinatorial restriction forced by
the assumption
$\chi(S) = 1$, and the conditions on the singularities of $X$ in
order to determine the
possible signatures $T_1 = (m_1, \ldots , m_r)$, $T_2 = (n_1, \ldots
, n_s)$ of the
respective polygonal groups.

\noindent We associate to an $r$-tuple $T_1$ the following numbers:
\begin{equation}\label{theta alpha}
\Theta_1:=-2 +\sum_{i=1}^r \left( 1 - \frac{1}{m_i} \right), \ \ \ \ \ \
\alpha_1:=\frac{K_S^2}{4\Theta_1}
\end{equation} and similarly we associate $\Theta_2$ and $\alpha_2$ to $T_2$.

The geometric meaning of $\alpha_i$ is given by Hurwitz' formula.
\begin{lemma}\label{alpha-g}
$\alpha_1=g_2-1$, $\alpha_2=g_1-1$, and $|G|=\frac{8\alpha_1\alpha_2}{K_S^2}$.
\end{lemma}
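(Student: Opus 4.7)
The proof is a direct computation combining three ingredients already available in the excerpt: Hurwitz' formula (in the form stated in the Riemann existence theorem of Section 3), the assumption $g'_1 = g'_2 = 0$ forced by $q=0$, and the formula $K_X^2 = \frac{8(g_1-1)(g_2-1)}{|G|}$ from (\ref{K2}), which is an honest integer equal to $K_S^2$ because $X$ has only rational double points.

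The plan is as follows. First I would observe that, since $g'_i = 0$, Hurwitz' formula applied to the Galois cover $C_i \to C_i/G \cong \PP^1$ determined by $\varphi_i$ reads
$$2g_i - 2 \;=\; |G|\left(-2 + \sum_{j}\left(1 - \tfrac{1}{m_{i,j}}\right)\right) \;=\; |G|\,\Theta_i,$$
so $\Theta_i = 2(g_i-1)/|G|$ for $i=1,2$. In particular $\Theta_i > 0$, so the definition $\alpha_i = K_S^2/(4\Theta_i)$ makes sense.

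Next I would substitute into the definition of $\alpha_1$, using $K_S^2 = K_X^2 = 8(g_1-1)(g_2-1)/|G|$:
$$\alpha_1 \;=\; \frac{K_S^2}{4\Theta_1} \;=\; \frac{8(g_1-1)(g_2-1)/|G|}{4\cdot 2(g_1-1)/|G|} \;=\; g_2 - 1,$$
and symmetrically $\alpha_2 = g_1 - 1$. Finally the identity $|G| = 8\alpha_1\alpha_2/K_S^2$ is just a rearrangement of (\ref{K2}):
$$\frac{8\alpha_1\alpha_2}{K_S^2} \;=\; \frac{8(g_1-1)(g_2-1)}{K_S^2} \;=\; |G|.$$

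There is no real obstacle: the lemma is a bookkeeping statement that identifies the abstract combinatorial quantities $\Theta_i, \alpha_i$ with the geometric invariants $g_i - 1$. The only point worth noting is that the equality $K_S^2 = K_X^2$ requires the rational-double-point hypothesis (so that $K_X$ pulls back to $K_S$ with no discrepancy), which is in force throughout this section.
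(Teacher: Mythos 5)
Your proof is correct and is essentially identical to the paper's: both apply Hurwitz' formula with $g'_i=0$ to get $2(g_i-1)=|G|\Theta_i$, substitute into $\alpha_i=K_S^2/(4\Theta_i)$ using $K_X^2=8(g_1-1)(g_2-1)/|G|$, and rearrange for the last identity. Your remark that $K_S^2=K_X^2$ rests on the rational-double-point hypothesis is a correct (and welcome) clarification of a point the paper leaves implicit.
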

\Proof Hurwitz' formula applied to $p_i$ gives
$$2(g_i-1)=|G|\Theta_i.$$
     Therefore by (\ref{K2})and (\ref{theta alpha}), we obtain
$$\alpha_1=\frac{K^2_S}{4\Theta_1}=\frac{2(g_1-1)(g_2-1)}{|G|\Theta_1}=g_2-1,$$
and
similarly $\alpha_2=g_1-1$. In particular,
     $|G|=\frac{8\alpha_1\alpha_2}{K_S^2}$.

\qed

\begin{rem} Note that the above lemma implies that, $\forall i,\
\alpha_i\in \NN$. As we
shall see, this is a strong restriction on the set of possible signatures.

\end{rem}

\begin{lemma} Let $T_1=(m_1,\ldots, m_r)$ be the signature of one of
the polygonal groups
yielding a surface with $t$ nodes. Then each $m_i$ divides
$2\alpha_1$. Moreover, there are at most $\frac{t}{2}$ indices
$i \in \{ 1, \dots , r\}$  such that $m_i$ does not divide $\alpha_1$.
\end{lemma}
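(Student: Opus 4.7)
The plan is to study the reduced fibre $F_i$ of the isotrivial fibration $p_1\colon X\to C_1/G=\mathbb P^1$ over the $i$-th branch point $p_i$, use adjunction on $S$, and then invoke a parity argument coming from a $\mathbb Z_2$-Galois intermediate cover.

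First I identify $F_i$ with the smooth curve $C_2/\mathrm{Stab}_G(q_i)\cong C_2/\mathbb Z_{m_i}$, where $q_i\in C_1$ is a preimage of $p_i$, and note that $p_1^{*}p_i=m_iF_i$ is Cartier. Under the ``only rational double points'' hypothesis, the intersection $\mathrm{Stab}_G(x)\cap\mathrm{Stab}_G(q_i)$ is either trivial or cyclic of order $2$ for every $x\in C_2$, so the $\mathbb Z_{m_i}$-action on $C_2$ has stabilisers only of order $1$ or $2$. In particular, if $m_i$ is odd the action is free, and Riemann--Hurwitz for $C_2\to F_i$ gives $m_i\mid 2g_2-2=2\alpha_1$ immediately.

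Next, for any $m_i$, I combine Lemma \ref{ftildesquare} (giving $\tilde F_i^{2}=-t_i/2$, where $t_i$ denotes the number of nodes of $X$ lying on $F_i$) with the facts that $K_S=\rho^{*}K_X$ for RDPs and that a general fibre of $p_1$ is a smooth copy of $C_2$, so $K_X\cdot F_i=\frac{1}{m_i}K_X\cdot p_1^{*}p_i=\frac{2g_2-2}{m_i}$. Adjunction on $S$ applied to the smooth curve $\tilde F_i$ yields
\[
2g_{F_i}-2 \;=\; K_S\cdot\tilde F_i+\tilde F_i^{2} \;=\; \frac{2\alpha_1}{m_i}-\frac{t_i}{2},
\]
equivalently $2\alpha_1=m_i\bigl(2g_{F_i}-2+t_i/2\bigr)$. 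From this identity, $m_i\mid 2\alpha_1$ is equivalent to $t_i$ being even.

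The key step is therefore to prove that $t_i$ is even when $m_i$ is even. For this I factor the cover $C_2\to F_i$ through the intermediate quotient $Y_i:=C_2/\mathbb Z_{m_i/2}$: the resulting map $Y_i\to F_i$ is a $\mathbb Z_2$-Galois cover of smooth curves whose inertia at any ramified point is the unique order-two subgroup $\langle\tau\rangle$ of $\mathbb Z_{m_i}$. The only-nodes hypothesis forces the image of such a ramification point in $X$ to be precisely a node lying on $F_i$; conversely, every node on $F_i$ arises this way. Hence the branch locus of the double cover $Y_i\to F_i$ consists of exactly $t_i$ points, and since the branch divisor of a double cover of smooth curves always has even degree, $t_i$ is even. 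Plugging this into the displayed identity gives the first assertion $m_i\mid 2\alpha_1$. For the ``moreover'', if $m_i\mid 2\alpha_1$ but $m_i\nmid\alpha_1$, then $2\alpha_1/m_i$ is odd while $2g_{F_i}-2$ is even, forcing $t_i/2$ to be odd, so $t_i\equiv 2\pmod 4$ and in particular $t_i\geq 2$. Summing over such indices, $2\cdot\#\{i:m_i\nmid\alpha_1\}\leq\sum_i t_i=t$, giving the bound $t/2$.

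The only non-routine point is the parity step, where I need to be careful that the branch locus of $Y_i\to F_i$ really coincides with the set of nodes of $X$ on $F_i$; this is exactly where the only-nodes hypothesis is used to rule out stabilisers of order greater than $2$ in $\mathbb Z_{m_i}$, so that the ramification of the double cover cannot come from anything other than $\tau$-fixed points on $C_2$.
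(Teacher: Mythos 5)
Your overall scheme (the adjunction identity $2p_a(\tilde F_i)-2=\frac{2\alpha_1}{m_i}-\frac{t_i}{2}$, reducing both claims to the parity of the number $t_i$ of nodes on the fibre $F_i$) is sound, and the ``moreover'' part is handled correctly. But the step you yourself flag as the key one --- that $t_i$ is even because it is the degree of the branch divisor of the double cover $Y_i:=C_2/\ZZ_{m_i/2}\to F_i$ --- fails whenever $4\mid m_i$. In that case the involution $\tau=h_i^{m_i/2}$ already lies in the index-two subgroup $\ZZ_{m_i/2}=\langle h_i^2\rangle$, so the $\tau$-fixed points of $C_2$ are ramification points of the \emph{first} stage $C_2\to Y_i$, while no odd power of $h_i$ can fix a point (its order would be divisible by $4$, contradicting the only-nodes hypothesis that stabilizers have order at most $2$). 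Hence $Y_i\to F_i$ is \emph{\'etale}, its branch locus is empty, and the parity of its degree says nothing about $t_i$. This is not a vacuous case: signatures with $m_i=4$ and nodes on the corresponding fibre occur in the classification (e.g.\ $K^2=2$, $T_1=T_2=(4,4,4)$, $G=\ZZ_4^2$), so as written the argument does not establish $m_i\mid 2\alpha_1$ there.

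The gap is easy to close, and you already have the needed fact in hand: Lemma \ref{ftildesquare} gives $\tilde F_i^2=-t_i/2$, and $\tilde F_i^2$ is an integer because $\tilde F_i$ is a divisor on the \emph{smooth} surface $S$; hence $t_i$ is even, no covering argument required. (This integrality is exactly what the paper uses for the second assertion.) For the first assertion the paper takes an even more direct route, bypassing adjunction altogether: since $X$ has only rational double points, $K_X$ is Cartier, so $W_i\cdot K_X\in\ZZ$ and $2\alpha_1=K_X\cdot p_1^*(q_i)=m_i\,(W_i\cdot K_X)$ gives $m_i\mid 2\alpha_1$ at once; the divisibility $m_i\mid\alpha_1$ in the unramified case then follows from Hurwitz applied to $C_2\to C_2/\langle h_i\rangle$, just as in your first paragraph.
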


\Proof Consider $p_1 \colon X \rightarrow C'_1 : = C_1/G$.
Fix $m_i$ in $T_1$ and
let
$h_i=\varphi_1(c_i)$ where the $c_i \in \BT_1$ are generators as in
(\ref{polygroup}).
Then
$h_i$ determines a
point $q_i \in C_1/G$, image of the points stabilized by $h_i$. Now,
$p_1^* (q_i)=m_iW_i$
for some irreducible Weil divisor
$W_i$. It follows that  the intersection number of $K_X$ with a fibre
$F$, which
equals on one side $2g_2-2=2\alpha_1$, can be written as $m_i W_i \cdot K_X$.
We conclude since $K_X$ is Cartier, whence $W_i \cdot K_X$ is an integer (cf.
\cite{fulton}).

\medskip\noindent If $C_2 \rightarrow C_2/h_i\cong W_i$ is \'etale,
then, by Hurwitz'
formula, $m_i$ divides $\alpha_1$.

\noindent Else  $h_i$ has order $= 2d$ and its $d$-th power
stabilizes some points of
$C_2$, and therefore $W_i$ passes through some nodes of $X$.  By
lemma (\ref{ftildesquare}) (for $F=W_i$), since $\tilde{F}^2\in \ZZ$
there are at least
two nodes on $W_i$, whence this can happen for at most $\frac{t}2$ indices $i$.

\qed

\medskip\noindent  Given $t \in \{2,4,6\}$ we want now to determine
all $r$-tuples
$T:=(m_1,\ldots, m_r)$ of positive integers which fulfill the
numerical conditions we
just found {\it i.e.},
\begin{itemize}
\item[i)] $\alpha : = \frac{8 -t}{4\Theta(m_1, \dots, m_r)}$ (cf.
(\ref{theta alpha})) is
a strictly positive integer;
\item[ii)] each $m_i$ divides $2\alpha$;
\item[iii)] at most $\frac{t}{2}$ of the $m_i$'s do not divide $\alpha$.
\end{itemize}

\noindent In other words we are looking for a priori possible
signatures of polygonal
groups giving rise to standard isotrivial fibrations with $p_g = q=
0$, and $t$ nodes.

\noindent In order to write a MAGMA script to list all the possible candidates
for the signatures,
we still have to give effective bounds for the numbers $r$ and $m_i$.
The following lemma
will do this.

\begin{lemma}\label{boundrm} Let $t \in \{2,4,6\}$ and assume that
$(m_1, \dots , m_r)$ is
a sequence of integers
$m_i \geq 2$ fulfilling conditions i), and ii) above. Then we have:

1) $r \leq 7$ ({\em i.e.}, $p_1$ (resp. $p_2$) has at most $7$ branch points);

2) {\rm for all}  $i $ we have $\ m_i \leq 3(10-t)$.
\end{lemma}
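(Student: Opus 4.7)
The plan is to extract two inequalities from the hypotheses and play them off each other. Condition (i) says $4\alpha\Theta = 8-t$ with $\alpha$ a positive integer, giving
\[
0 < \Theta \leq \frac{8-t}{4}.
\]
Condition (ii) yields $m_i \leq 2\alpha$ for every $i$, equivalently $2m_i\Theta \leq 8-t$. Part 1) follows immediately: combining the upper bound on $\Theta$ with the trivial lower bound $\Theta = -2 + \sum_j(1 - 1/m_j) \geq r/2 - 2$ (which uses only $m_j \geq 2$) gives $r/2 - 2 \leq (8-t)/4$, that is $r \leq (16-t)/2$, and in particular $r \leq 7$ (with the stronger bounds $r \leq 6, 5$ for $t=4,6$).

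For part 2), fix an index $j$ and set $M := m_j$. I separate the contribution of $M$ from the rest of the sum by introducing
\[
A := \sum_{i \neq j}\Bigl(1 - \tfrac{1}{m_i}\Bigr) - 1,
\]
so that $\Theta = (1 - 1/M) + A - 1 = A - 1/M$. The inequality $2M\Theta \leq 8-t$ then rewrites as $2MA \leq 10 - t$. Since $\Theta > 0$ forces $A > 1/M > 0$, it suffices to prove $A \geq 1/6$, for then $M \leq (10-t)/(2A) \leq 3(10-t)$.

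The bound $A \geq 1/6$ is an elementary case check. If $r \geq 4$, then $A$ is a sum of at least three terms each $\geq 1/2$ minus $1$, so $A \geq 1/2$. If $r = 3$, write $A = 1 - 1/m_a - 1/m_b$ for the two remaining entries; positivity $A > 0$ excludes $m_a = m_b = 2$, and splitting on $\min(m_a,m_b)$ gives: if $\min = 2$ then the other is $\geq 3$ and $A = 1/2 - 1/\max \geq 1/6$, with equality at $\{2,3\}$; if both are $\geq 3$ then $A \geq 1/3$. There is no real obstacle; the whole lemma is combinatorial bookkeeping once the two inequalities above are in hand, and the sharpness of $M \leq 3(10-t)$ manifests at signatures of type $(2,3,M)$, i.e.\ precisely the hyperbolic triangle regime with one large generator order.
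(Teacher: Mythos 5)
Your proof is correct and follows essentially the same route as the paper: part 1) is the same counting argument ($\Theta\geq r/2-2$ versus $\Theta\leq(8-t)/4$), and in part 2) your quantity $A$ is exactly the paper's $\Theta+\tfrac{1}{m_1}$, bounded below by $\tfrac16$ by the same $r=3$ versus $r\geq 4$ case check and then combined with $m_i\leq 2\alpha$ and $4\alpha\Theta=8-t$ in the same way. The only cosmetic difference is that you treat an arbitrary index rather than normalizing $m_1$ to be the maximum.
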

\Proof 1) If $r\geq 8$, then $\Theta \geq -2+\frac82=2$. Then $1 \leq
       \alpha=\frac{8-t}{4\Theta}\leq \frac{6}{4 \cdot 2}$, a contradiction.

\noindent 2) We can assume $m_1 \geq m_i$ for all $i$. We want to
show that $m_1\leq
3(10-t)$.

\noindent Note that,
since  $\alpha$ (and therefore also
$\Theta$) is positive,  $r \geq 3$ and if $r=3$ at most one $m_i$ can be
equal to $2$. Therefore it holds
$$\Theta+\frac{1}{m_1}= -1 +\sum_{i=2}^r (1 - \frac{1}{m_i} ) \geq
\frac16,$$ which is equivalent to
$6(\Theta m_1+1) \geq m_1$. Since, by condition ii), $m_1
\leq 2\alpha$, we
conclude
$$ m_1 \leq 6(2\alpha \Theta+1)=6\left(\frac{8-t}{2}+1\right)
$$

\qed

\medskip\noindent  The MAGMA script {\bf ListOfTypes}, which can be
found in the appendix as all other scripts, lists all signatures
fulfilling  conditions i), ii), iii): see table \ref{typespr}.

\begin{table}[ht]
\caption{The signatures which respect conditions i), ii) and iii)}
\label{typespr}
\begin{tabular}{|c|c|c||c|c|c||c|c|c|}
\hline
$K^2$&T&$\alpha$&$K^2$&T&$\alpha$&$K^2$&T&$\alpha$\\
\hline\hline 2&  (2,3,7)&21&4&    (2,3,7)&42&6&     (2,3,7)&63\\ 2&
(2,3,8)&12&4&
(2,3,8)&24&6&     (2,3,8)&36\\ 2&  (2,3,9)& 9&4&    (2,3,9)&18&6&
(2,3,9)&27\\ 2&
(2,3,12)& 6&4&   (2,3,10)&15&6&    (2,3,12)&18\\ 2&  (2,4,5)&10&4&
(2,3,12)&12&6&
(2,3,15)&15\\ 2&  (2,4,6)& 6&4&   (2,3,18)& 9&6&    (2,3,24)&12\\ 2&
(2,4,8)& 4&4&
(2,4,5)&20&6&     (2,4,5)&30\\ 2&  (2,5,5)& 5&4&    (2,4,6)&12&6&
(2,4,6)&18\\ 2&
(2,6,6)& 3&4&    (2,4,8)& 8&6&     (2,4,7)&14\\ 2&  (3,3,4)& 6&4&
(2,4,12)& 6&6&
(2,4,8)&12\\ 2&  (3,3,6)& 3&4&    (2,5,5)&10&6&    (2,4,10)&10\\ 2&
(4,4,4)& 2&4&
(2,5,10)& 5&6&    (2,4,16)& 8\\ 2&(2,2,2,3)& 3&4&    (2,6,6)& 6&6&
(2,5,5)&15\\
2&(2,2,2,4)& 2&4&    (2,8,8)& 4&6&    (2,6,12)& 6\\
     &         &  &4&    (3,3,4)&12&6&     (2,7,7)& 7\\
     &         &  &4&    (3,3,6)& 6&6&     (3,3,4)&18\\
     &         &  &4&    (3,4,4)& 6&6&     (3,3,6)& 9\\
     &         &  &4&    (3,6,6)& 3&6&    (3,3,12)& 6\\
     &         &  &4&    (4,4,4)& 4&6&     (3,4,6)& 6\\
     &         &  &4&  (2,2,2,3)& 6&6&     (4,4,8)& 4\\
     &         &  &4&  (2,2,2,4)& 4&6&   (2,2,2,4)& 6\\
     &         &  &4&  (2,2,3,3)& 3&6&   (2,2,2,8)& 4\\
     &         &  &4&  (2,2,4,4)& 2&6&   (2,3,3,3)& 3\\
     &         &  &4&(2,2,2,2,2)& 2&6& (2,2,2,2,4)& 2\\
\hline
\end{tabular}
\end{table}

%\begin{rem}The code {\bf ListOfTypes} is in the appendix together
%with all the other scripts we wrote.
%The reader should know that, in order to simplify
%the programming, we
%have not considered a signature as a sequence of variable length
%$(m_1, \ldots, m_r)$,
%$r\leq 7$, $m_i\geq 2$, but {\em as a sequence
%$(m_1, \ldots, m_7)$, $m_i\geq 1$, by adding the necessary number of
%$1$'s}.
%
%\noindent So, for example, $(2,3,7)$ becomes $(1,1,1,1,2,3,7)$.
%\end{rem}

\subsection{The possible groups $G$}

For each surface $X=(C_1 \times C_2)/G$ with $p_g = q = 0$ and nodes
as above we have two associated signatures $T_1$ and
$T_2$, such that $G$ is a quotient of the corresponding polygonal groups.

     Moreover, as
shown by lemma \ref{alpha-g}, the order of the group depends only on $T_1$,
$T_2$ and $K^2 : = K^2_X = 8-t$. Having found, for each possible
$K^2$, a finite list of possible signatures, we have then only
finitely many groups to consider.

%We proceed now using three MAGMA functions. The first one, {\bf
%ElsOfOrd}, lists
%    all elements of a given order in a finite group. The second one,
%{\bf ExistSphericalGenerators},  determines whether a finite group is
%a quotient of a
%polygonal group of given signature.

We can then write a script which  computes, for
each $K^2$, all possible triples $(T_1,T_2,G)$, where $G$ is a
group of order $|G|=\frac{8\alpha_1\alpha_2}{K^2}$, which
is a quotient of both  polygonal groups of respective signatures
$T_1, T_2$. This is done by the script  {\bf ListGroups}.

Note tha our code skips those pairs of signatures
giving rise to groups of order 1024 or bigger than 2000, since these
cases are not covered by the MAGMA SmallGroup database of finite groups.

Moreover, the code skips the case $|G|=1152$, since there are more than
$10^6$ groups of this
order and this causes extreme computational complexity.

\noindent We shall see now, that the cases skipped by the MAGMA 
script actually do not occur.

\noindent
\underline{$|G|=1024$:} For each pair of
signatures $T_1$, $T_2$ in the same column of the table
\ref{typespr}, $\frac{8\alpha_1\alpha_2}{K^2} \neq 1024$. So this
case does not occur.

\noindent
\underline{$|G|=1152$:} Looking again at  table
\ref{typespr}, this can happen
only in one case:
\begin{itemize}
     \item[-] $K^2=4$, $T_1=T_2=(2,3,8)$.
\end{itemize} Since the abelianization of $\BT(2,3,8)$ is
$\ZZ_2$, the abelianization of $G$ can have order at most $2$. The
MAGMA smallgroup
database shows that there are exactly $44$ groups of order $1152$ with
abelianization of order at most
$2$. It can now easily be checked that
none of these groups
is a quotient of
$\BT(0;2,3,8)$.

\noindent
\underline{$|G|>2000$:} This can happen in $6$ cases, all with $T_1=(2,3,7)$.

\noindent More precisely we have:
\begin{itemize}
     \item[-] $K^2=4$, $T_2=(2,3,7)$ or $(2,3,8)$, or
\item[-]  $K^2=6$, and $T_2$ one of the following: $(2,3,7)$,
$(2,3,8)$, $(2,3,9)$,
$(2,4,5)$.
\end{itemize}

\noindent Since each quotient of $\BT(0;2,3,7)$ is a perfect group we
can look at the
MAGMA database of perfect groups, which is complete up to order 50000.
Table \ref{typespr} gives that
$|G|$, in the six respective cases, is one of the numbers $3528$,
$2016$, $5292$, $3024$,
$2268$, and $2520$. But there is only one perfect group having order
equal to one of these
numbers, namely, $\mathfrak{A}_7$;  a quick verification shows that
$\mathfrak{A}_7$ is
not a quotient of
$\BT(0;2,3,7)$.

\subsection{The classification}\label{explainthetable}

We have now, for each value $K^2 = 2,4,6$, a finite list of triples
$(T_1,T_2,G)$ such
that $T_1$ and $T_2$ fulfill the numerical conditions i), ii), iii),
$G$ has the order
given by lemma \ref{alpha-g} and is a quotient of both polygonal
groups $\BT_1$ and
$\BT_2$ associated to the signatures $T_1$ and $T_2$.

To each of those triples correspond many families of surfaces, one
for each pair of
appropriate homomorphisms $\BT_1 \rightarrow G$, $\BT_2 \rightarrow G$.

     We construct these families varying (here $C'_1 = C'_2  = \PP^1 $)
     the branch point sets $\sB_1 : = \{p_1, \dots p_r \} \subset \PP^1$,
$\sB_2 : = \{q_1, \dots q_s \} \subset \PP^1$,
and choosing respective standard bases for
     the fundamental groups  $\pi_1 (\PP^1 \setminus \sB_i)$.
Then we use
Riemann's existence theorem to construct the two curves
$C_1$ and $C_2$ with an action of $G$ of respective signatures $T_1$ and
$T_2$, we finally consider $X= (C_1 \times C_2)/G$.

Observe that, since $  C'_i  = \PP^1 $, any appropriate homomorphism
of $\BT_i$ to $G$ is completely determined by a so called {\em
spherical system of
generators  } of $G$ of signature $T_i$, i.e., in the case of
$\BT_1$, a sequence of
elements
$h_1,
\dots , h_r$ which generate $G$, satisfy $h_1 \cdot   \ldots \cdot h_r =1$,
and fulfill moreover $ ord (h_i) = m_i$.

\noindent It turns out that most of these surfaces have too many or
too bad singularities,
and we only take into consideration the surfaces whose singular locus
consists of exactly
$t = 8-K^2$ nodes.

\noindent However, different pairs of appropriate homomorphisms can
yield the same
family of surfaces, due to different choices of a standard basis: this is
takien into account by declaring that
two pairs of  appropriate homomorphisms are equivalent iff they are in
the same orbit
of the
natural action of
$\Aut(G)$ (simultaneosly on both elements of the pair) and of the 
respective braid groups (the
second equivalence
relation is generated by the so-called {\em Hurwitz moves}, cf.
\cite{bacat}).

\medskip\noindent We determine our equivalence classes by using two
MAGMA scripts (and many subroutines).

%\noindent {\bf FSGUpToConjugation} finds all possible systems of
%spherical generators
%of a group $G$ which are of the  prescribed signature (first modulo
%inner automorphisms, in order to reduce the memory load).

%\noindent {\bf CheckSings} uses {\bf FSGUpToConjugation} to produce
%all possible pairs of
%spherical generators of a group which are of the  prescribed
%signatures. It then proceeds
%to verify  whether such  pairs yields a surface with the right
%number of nodes and no further singularities, interrupting the
%computation as soon as it
%finds a pair, which fulfills these conditions.

\noindent {\bf ExistingNodalSurfaces} gives as output, for each
$K^2$, all possible triples $(T_1,T_2,G)$ which yield at least one 
nodal surface
with the right number of
nodes.
%  simply by running {\bf CheckSings} on the output of ListGroups.
It gives $7$ possible triples ($T_1$, $T_2$, $G$) in the case $K^2=2$,
$11$ possible triples in the case $K^2=4$ and $6$ possible triples
in the case $K^2=6$.

These results are listed in table \ref{surfaces} and, more
precisely, in the first 6
columns, showing respectively $K^2$, $T_1$, $T_2$, $g_1$,
$g_2$ and the group $G$.

The corresponding families of surfaces have dimension equal to
     $ r + s - 6 = \# T_1+\# T_2-6$.

To find all the irreducible families corresponding to a given triple
we write the script {\bf FindAllComponents}
which determines all possible equivalence classes for pairs of spherical
systems of generators, producing one representative for each class.

%Subroutines are: {\bf AutGr}, which describes the group of
%automorphisms of $G$ (as
%a concrete set), {\bf HurwitzMove}, which performs  Hurwitz moves on
%a sequence of
%elements of a group, and  {\bf HurwitzOrbit} which determines the
%whole orbit for the
%action of {\bf HurwitzMove} on a sequence of elements of a group, and
%gives as outputs (
%in order to
%save on memory) sequences of elements  whose corresponding sequence of orders
%is non decreasing.

%We wrote moreover: {\bf SphGens}, which lists all possible sets of
%spherical generators of
%a group of prescribed types, and {\bf CheckSingsEl}, which checks if
%two given sequences
%of spherical generators of a group give exactly the prescribed
%singularities. Finally,
%{\bf FindAllComponents}, considers for each group and pair of
%signatures  all the possible
%pairs of systems spherical generators of the given signatures,
%partitions them in orbits
%for the  given equivalence relation by using {\bf HurwitzOrbit} and
%{\bf AutGr}, saves
%only one representative for each orbit and returns it, if it passes
%the singularity test
%{\bf CheckSingsEl}. \footnote{For the reader who wishes to have a
%look at the scripts:
%since we found that $r,s \leq 5$, the signatures are now written as
%sequences of
%$5$ numbers (instead of $7$ as before).  For instance, $(2,3,7)$ is now
%written as $(1,1,2,3,7)$.}

Running {\bf FindAllComponents} on the $24$ triples given by {\bf
ExistingNodalSurfaces},
we have find one family in $21$ cases and two families in the
remaining three cases. The number of families found is written in the
seventh column of the table \ref{surfaces}.
\noindent Finally, we compute a presentation of the fundamental group
of each of the constructed  surfaces. To do this we start from the given
presentation of the direct product of the two given polygonal groups.
Using the Reidemeister - Schreier routine, we compute a presentation
of its finite index subgroup $\HH$, and divide out the appropriate
torsion elements in $\HH$ to obtain a presentation of $\pi_1$. Since
we know by theorem \ref{strfund}, that $\pi_1$ is surface times
surface by finite, we go through all the normal subgroups of finite index
until such a group appears.

Let us point out that, in the three cases in which there are two
families, they both have the same fundamental group. Therefore we
listed the fundamental
groups in the ninth column of the table \ref{surfaces} without
distinguishing the two cases.

\noindent In the eight column we have listed $H_1(X,\ZZ)$, which is
always finite.

\begin{rem}\label{---}
The description of the groups in the table
\ref{surfaces} requires some explanation.

\noindent As usual, $\ZZ_d$ is the cyclic group of order $d$, $\ZZ$
the cyclic group of
infinite order,  $\mathfrak{S}_n$ is the symmetric group in $n$
letters, $\mathfrak{A}_n$
its only index $2$ subgroup.

\noindent
$PSL(2,7)$ is the group of $2 \times 2$ matrices over
${\mathbb F}_7$ with determinant $1$ modulo the subgroup generated by $-Id$.

\noindent
$D_{p,q,r}=\langle x,y|x^p,y^q,xyx^{-1}y^{-r} \rangle$, and
$D_n=D_{2,n,-1}$ is the usual dihedral group of order $2n$.

$G(32,2)$ is the second group of order $32$ in the MAGMA database, a
$2$-group of
exponent $4$ whose elements of order $2$ form the center,  giving a
central extension $1
\rightarrow \ZZ_2^3 \rightarrow \pi_1 \rightarrow \ZZ_2^2 \rightarrow
1$, which does not split.

Finally, we have  semidirect products $H \rtimes \ZZ_r$; to specify
them, we have to indicate the image of the generator of $\ZZ_r$ in
$Aut(H)$.

For $H = \ZZ^2$ either $r$ is even, and then the image of the 
generator of $\ZZ_r$ in
$Aut(H)$ is $-Id$, or
$r=3$ and the image of the generator of $\ZZ_3$ is the matrix $\begin{pmatrix}
-1&-1\\
1&0
\end{pmatrix}$.

Else $H$ is finite and $r=2$;
for
$H=\ZZ_3^2$ the image of the generator of $H$ is
$-Id$; for $H=\ZZ_2^4$ it is
$\begin{pmatrix} 1&0\\ 1&1
\end{pmatrix} \oplus \begin{pmatrix} 1&0\\ 1&1
\end{pmatrix}.$
\end{rem}

Except for the last two cases, all the groups appearing as fundamental
groups are polycyclic by finite.
The groups in line 10 and 16 of table \ref{surfaces} have the same
commutator quotient and also the same commutator subgroup,
but they are not isomorphic since, for example, the one in line 16 has
no normal subgroups of index $4$ with free abelianization.
The groups in the lines 8 and 14 are isomorphic to each other, as the 
ones in the  lines
10 and 13, as well as the groups in the lines 17
and 18 (they are isomorphic to the fundamental groups of the Keum 
Naie surfaces, \cite{keumnaie}).

We observe here that the isomorphism problem
for polycyclic by finite groups is decidable (\cite{Segal}).

\subsection{Bloch's conjecture}

Another important feature of surfaces with $p_g = 0$ is the following

\begin{conj}
Let $S$ be a smooth surface with $p_g = 0$ and let $A_0(S) = 
\oplus_{i=0}^{\infty} A_0^i(S)$ be the group of rational equivalence 
classes of zero cycles on $S$. Then the kernel $T(S)$ of the natural 
morphism $A_0^0(S) \ra \Alb(S)$ is trivial.
\end{conj}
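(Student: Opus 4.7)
The statement is the celebrated Bloch conjecture, which is widely open. My proposal is to follow the motivic approach via Kimura--O'Sullivan finite dimensionality, reducing the conjecture to a single structural question on the Chow motive of $S$, and then to honestly identify that question as the crux.

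\textbf{Step 1: Reduction to surfaces of general type.} The classical theorem of Bloch--Kas--Lieberman establishes $T(S)=0$ for every smooth projective surface with Kodaira dimension at most one (the argument handles ruled, rational, Enriques, bielliptic, and properly elliptic surfaces case by case, using Abel--Jacobi type arguments on the relative Jacobians of an elliptic or ruled fibration). Hence I may assume $S$ is of general type. Since $p_g(S)=0$ combined with general type forces $q(S)\leq 1$, the Albanese variety is either trivial or an elliptic curve, and the latter subcase reduces to the former by passing to the Albanese fibration and applying a Bloch--Srinivas argument fibrewise.

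\textbf{Step 2: Motivic reformulation.} Write the Chow motive of $S$ with rational coefficients as $h(S)=\mathbf{1}\oplus h^1(S)\oplus h^2(S)\oplus h^3(S)\oplus\mathbf{1}(-2)$, and split $h^2(S)=h^2_{\mathrm{alg}}(S)\oplus t_2(S)$ using the N\'eron--Severi projector. By the work of Kahn--Murre--Pedrini, $T(S)=0$ if and only if $t_2(S)=0$ in the category of rational Chow motives. The hypothesis $p_g(S)=0$ implies that the realization of $t_2(S)$ in Betti cohomology vanishes, hence the identity endomorphism of $t_2(S)$ is numerically trivial.

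\textbf{Step 3: Application of finite dimensionality.} Assume for a moment that the motive $h(S)$ is finite dimensional in the sense of Kimura \cite{kimura}. Then so is its direct summand $t_2(S)$, and a fundamental theorem of Kimura asserts that on a finite-dimensional motive every numerically trivial endomorphism is nilpotent. Applied to the identity of $t_2(S)$ this forces $t_2(S)=0$, whence $T(S)=0$ by Step 2. Thus the conjecture is implied by Kimura finite dimensionality of $h(S)$.

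\textbf{Step 4: The main obstacle.} The real content of the conjecture is now concentrated in a single open problem: finite dimensionality of $h(S)$ for every surface of general type with $p_g=0$. For surfaces rationally dominated by a product of curves, finite dimensionality follows from Kimura's original theorem (curves are finite dimensional, products of finite-dimensional motives are finite dimensional, and finite dimensionality descends under surjective correspondences), and this is precisely the mechanism exploited in the present paper to verify Bloch's conjecture for the product-quotient families. Extending this to arbitrary $S$ with $p_g=0$ -- for example to the simply connected Barlow surface, to Lee--Park Campedelli surfaces, or to the $K^2=1$ examples whose geometry does not present any visible dominance by a product of curves -- is exactly the step where I expect the argument to stall, since no general method is currently known to produce a finite-dimensional resolution of the Chow motive of such a surface. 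In short, my plan reduces the statement to Kimura finite dimensionality, and the honest assessment is that the remaining step is the principal open problem in the theory of rational equivalence of zero cycles.
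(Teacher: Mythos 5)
The statement you were asked to prove is Bloch's conjecture, which the paper itself presents only as a \emph{conjecture}: no proof is given or claimed there, and the paper's actual contribution on this front is the theorem immediately following it, namely that the conjecture holds for the minimal models of product-quotient surfaces with $p_g=0$. Your proposal correctly recognizes this. The reduction you carry out (Bloch--Kas--Lieberman for Kodaira dimension $<2$, the Kahn--Murre--Pedrini reformulation $T(S)=0 \Leftrightarrow t_2(S)=0$, and Kimura's nilpotence theorem turning finite dimensionality of $h(S)$ into vanishing of $t_2(S)$) is sound, and the one case you can actually close --- surfaces dominated by a product of curves, via Kimura's theorems on curves, products, and surjective morphisms of motives --- is precisely the mechanism the paper uses in its Theorem 6.24. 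So for the provable fragment your route coincides with the paper's; for the full conjecture you honestly identify the remaining step (finite dimensionality of $h(S)$ for an arbitrary general-type surface with $p_g=0$, e.g.\ the Barlow or Lee--Park surfaces) as open, which is the correct assessment.

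One small inaccuracy in Step 1: for a minimal surface of general type one has $\chi(\mathcal{O}_S)\geq 1$, so $p_g=0$ forces $q=0$ outright (as the paper notes); there is no $q=1$ subcase to dispose of, and the fibrewise Bloch--Srinivas argument you sketch for it is vacuous. This does not affect the rest of the reduction.
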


The conjecture has been proven for $\kappa(S) <2$ by Bloch, Kas and 
Liebermann (cf. \cite{bkl}). If instead $S$ is of generaltype, then 
$q(S) = 0$, whence Bloch's conjecture asserts for those surfaces that 
$A_0(S) \cong \ZZ$.

Inspite of the efforts of many authors, there are only few cases of 
surfaces of general type for which Bloch's conjecture has been 
verified (cf. e.g. \cite{inosemik}, \cite{barlowbloch}, \cite{keum}, 
\cite{voisin}).

Recently S. Kimura introduced the following notion of {\em finite 
dimensionality} of motives (\cite{kimura}).
\begin{definition}
  Let $M$ be a motive.

  Then $M$ is {\em evenly finite dimensional} if there is a natural 
number $n \geq 1 $ such that $\wedge ^n M = 0$.

  $M$ is {\em oddly finite dimensional} if there is a natural number 
$n \geq 1 $ such that $\Sym ^n M = 0$.

  And, finally, $M$ is {\em finite dimensional} if $M = M^+ \oplus 
M^-$, where $M^+$ is evenly finite dimensional and $M^-$ is oddly 
finite dimensional.
\end{definition}

Using this notation, he proves the following
\begin{theo}\label{kimura}
1) The motive of a smooth projective curve is finite dimensional 
(\cite{kimura}, cor. 4.4.).

2) The product of finite dimensional motives is finite dimensional 
(loc. cit., cor. 5.11.).

3) Let $f \colon M \ra N$ be a surjective morphism of motives, and 
assume that $M$ is finite dimensional. Then $N$ is finite dimensional 
(loc. cit., prop. 6.9.).

4) Let $S$ be a surface with $p_g = 0$ and suppose that the Chow 
motive of $X$ is finite dimensional. Then $T(S) = 0$ (loc.cit., cor. 
7.7.).
\end{theo}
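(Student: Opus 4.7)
The plan is to establish the four parts of Theorem \ref{kimura} in order, following Kimura's strategy.

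For part (1), I would start with the classical Chow--K\"unneth decomposition of a smooth projective curve $C$ of genus $g$: choosing a zero-cycle of degree one yields projectors realizing $h(C)=\mathbf{1}\oplus h^{1}(C)\oplus\mathbf{L}$. The summands $\mathbf{1}$ and $\mathbf{L}$ are of rank one, so $\wedge^{2}$ annihilates them, making them evenly finite dimensional. For $h^{1}(C)$, the Abel--Jacobi map $C\hookrightarrow J(C)$ (after a base extension if needed) realizes $h^{1}(C)$ as a direct summand of $h^{1}(J(C))$, reducing the question to showing $\Sym^{2g+1}h^{1}(A)=0$ for an abelian variety $A$ of dimension $g$. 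This is the crucial step: via Shermenev's theorem, $h^{1}(A)$ is cut out of $h(A)$ by the $n$-th power endomorphisms, and $\Sym^{2g+1}h^{1}(A)$ embeds into a summand of $h(A^{2g+1})$ supported in cohomological degree $2g+1$, which vanishes by a dimension count on $A^{2g+1}$.

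For (2), I decompose $M\otimes N$ into the four pieces $M^{\pm}\otimes N^{\pm}$ and apply the binomial-type formulas for $\wedge^{k}$ and $\Sym^{k}$ of a direct sum; the Koszul sign rule in the tensor category of motives has the effect that $\wedge$ and $\Sym$ swap roles on odd factors, and a bookkeeping argument then bounds the vanishing degrees explicitly in terms of those for $M$ and $N$. Part (3) is then immediate: a surjection of Chow motives splits in the pseudo-abelian structure, so $N$ is a direct summand of $M$, and finite dimensionality passes to summands because the cutting projectors commute with $\wedge^{n}$ and $\Sym^{n}$.

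The main obstacle is (4). Here I would first use the refined Chow--K\"unneth decomposition of $h(S)$ to isolate a ``transcendental'' summand $t(S)$ characterized by the identity $T(S)=A_{0}(t(S))$; the hypothesis $p_g=0$ (which, combined with $S$ being of general type, forces $q=0$ as well) then implies that the Betti realization of $t(S)$ vanishes. To pass from vanishing in cohomology to vanishing as a Chow motive, I would invoke Kimura's nilpotency principle: on a finite dimensional motive every homologically (equivalently, numerically) trivial endomorphism is nilpotent. Applied to the projector cutting $t(S)$ out of $h(S)$, an idempotent which is also nilpotent must be zero, so $t(S)=0$ and hence $T(S)=0$. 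The delicate part is proving the nilpotency principle itself, which I would approach by combining the Cayley--Hamilton type identities encoded by the vanishings $\wedge^{n}M^{+}=0$ and $\Sym^{n}M^{-}=0$ with the trace formalism in the rigid tensor category of motives; showing that an endomorphism killed by the cohomological realization lies in the nil-ideal generated by these identities is the real heart of the argument.
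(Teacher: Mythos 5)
This theorem is not proved in the paper: all four statements are imported verbatim from Kimura's article \cite{kimura} (cor.~4.4, cor.~5.11, prop.~6.9, cor.~7.7), so there is no argument in the text to compare yours against. Read as a reconstruction of Kimura's proofs, your sketch of (1), (2) and (4) is broadly along recognisable lines: for (1) your detour through the Jacobian and the Shermenev/Deninger--Murre decomposition is a legitimate alternative to Kimura's direct argument with the symmetric products $C^{(n)}$ (though the justification ``supported in cohomological degree $2g+1$, which vanishes by a dimension count on $A^{2g+1}$'' is not right as stated --- the correct input is that $\Sym^{i}h^{1}(A)=0$ for $i>2g$, proved via the eigenspaces of the multiplication-by-$n$ endomorphisms, not by a dimension count); for (2) the sign-rule bookkeeping is the standard proof; and for (4) the route via the transcendental summand $t(S)$, the vanishing of its realization when $p_g=0$, and the nilpotency of homologically trivial endomorphisms of a finite dimensional motive is exactly Kimura's.

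The genuine gap is in (3). A pseudo-abelian category is one in which \emph{idempotents} split; it does not follow that epimorphisms split, so ``a surjection of Chow motives splits in the pseudo-abelian structure'' is unjustified as written. (In Kimura's setting a surjection does come equipped with a right inverse --- this is how the notion is set up, and it is what one has in the relevant application $h(C_1\times C_2)\twoheadrightarrow h((C_1\times C_2)/G)=h(C_1\times C_2)^G$ --- but that is by definition, not a consequence of pseudo-abelianness.) More importantly, even granting $M\cong N\oplus N'$, the conclusion is not ``immediate'': the decomposition $M=M^{+}\oplus M^{-}$ witnessing finite dimensionality has no reason to be compatible with the projector onto $N$, so one cannot simply restrict it, and ``the cutting projectors commute with $\wedge^{n}$ and $\Sym^{n}$'' does not address the real issue, which is to \emph{produce} an even/odd decomposition of $N$ in the first place. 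That is precisely the content of Kimura's prop.~6.9, and it requires the nontrivial structural results of his sections 5--6; your proposal elides the one step in this list that actually carries the weight in the application to product-quotient surfaces.
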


Using the above results we  obtain
\begin{theo}
  Let $S$ be the minimal model of a product-quotient surface with $p_g = 0$.

   Then Bloch's conjecture holds for $S$, namely, $A_0(S) \cong \ZZ$.
\end{theo}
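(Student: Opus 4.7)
\emph{Proof proposal.} The plan is to apply Kimura's theorem (theorem \ref{kimura}) to reduce Bloch's conjecture for $S$ to the finite dimensionality of the rational Chow motive $M(S)$, and then to establish that finite dimensionality by realising $M(S)$ as a direct summand of the motive of a smooth blow-up of $C_1 \times C_2$. Since the product-quotient surfaces under consideration satisfy $p_g(S) = q(S) = 0$, one has $\Alb(S) = 0$, so it is enough to prove $T(S) = 0$; by part (4) of theorem \ref{kimura} this follows from finite dimensionality of $M(S)$.

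First, parts (1) and (2) of theorem \ref{kimura} imply that the motive of $Z := C_1 \times C_2$ is finite dimensional. Next, write $p \colon Z \to X$ for the quotient map and $\pi \colon S \to X$ for the minimal resolution, which coincides with the minimal model since $X$ has only rational double points. These data yield a rational map $\phi \colon Z \dashrightarrow S$, indeterminate only at the finitely many $G$-fixed points of $Z$ lying over $\Sing(X)$. Because $Z$ is a smooth projective surface, classical elimination of indeterminacy produces a composition of blow-ups of points $\sigma \colon \tilde Z \to Z$ such that $\phi$ extends to a morphism $\tilde \phi \colon \tilde Z \to S$ satisfying $\pi \circ \tilde \phi = p \circ \sigma$. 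The blow-up formula for Chow motives then gives
\[
M(\tilde Z) \;\cong\; M(Z) \oplus \bigoplus_{j=1}^{N} \mathbb L,
\]
where the $N$ Lefschetz summands $\mathbb L$ (one per point blown up) are evenly finite dimensional; hence $M(\tilde Z)$ is finite dimensional.

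The morphism $\tilde \phi \colon \tilde Z \to S$ is surjective and generically finite of degree $|G|$, so the pushforward and pullback on rational Chow motives satisfy $\tilde \phi_{*} \circ \tilde \phi^{*} = |G| \cdot \mathrm{id}_{M(S)}$. Since $|G|$ is invertible in $\mathbb Q$, this exhibits $M(S)$ as a direct summand of $M(\tilde Z)$; by part (3) of theorem \ref{kimura} the motive $M(S)$ is finite dimensional, and part (4) then gives $T(S) = 0$. Combined with $q(S) = 0$ this yields $A_0(S) \cong \mathbb Z$. The only step demanding care is the elimination of indeterminacy of $\phi$ by point blow-ups, but this is a standard property of rational maps from a smooth projective surface to a projective variety and poses no real obstacle; morally, the argument says that taking a finite quotient followed by resolving rational double points cannot destroy finite dimensionality of the Chow motive.
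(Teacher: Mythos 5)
Your proof is correct and rests on the same engine as the paper's --- Kimura's theorem \ref{kimura} applied to the finite-dimensional motive of $C_1 \times C_2$ --- but the reduction to that motive is carried out differently. The paper stays with the singular quotient $X=(C_1\times C_2)/G$: since $X$ has rational singularities one has $T(X)=T(S)$, and the motive of $X$ is realized directly as a quotient (the $G$-invariant summand) of the motive of $C_1\times C_2$, so parts 2), 3), 4) of theorem \ref{kimura} apply in two lines. You instead stay entirely in the smooth category: you eliminate the indeterminacy of $C_1\times C_2\dashrightarrow S$ by point blow-ups, invoke the blow-up formula to preserve finite dimensionality, and use $\tilde\phi_{*}\tilde\phi^{*}=|G|\cdot\mathrm{id}$ to exhibit $M(S)$ as a direct summand of $M(\tilde Z)$. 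Your route costs a little more machinery (elimination of indeterminacy, the blow-up formula, and the fact --- not listed explicitly in theorem \ref{kimura} but proved by Kimura --- that a direct sum of finite-dimensional motives is finite dimensional), but it buys some precision: it avoids having to make sense of the Chow motive of the singular surface $X$ and of applying part 4) to it, a point the paper's argument passes over quickly. Both arguments are valid and reach the same conclusion $T(S)=0$, hence $A_0(S)\cong\ZZ$.
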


\begin{proof}
  Let $S$ be a product-quotient surface.

  Then $S$ is the minimal model of $X = (C_1 \times C_2)/G$. Since $X$ 
has rational singularities $T(X) = T(S)$.

  By thm. \ref{kimura}, 2), 3) we have that the motive of $X$ is 
finite dimensional, whence, by 4),  $T(S) = T(X) = 0$.

  Since $S$ is of general type we have also $ q(S)=0$, hence $A_0^0(S) 
= T(S) = 0$.

\end{proof}

\begin{cor}
  All the surfaces in table \ref{surfaces} and all the surfaces in 
\cite{bacat}, \cite{bcg} satisfy Bloch's conjecture.
\end{cor}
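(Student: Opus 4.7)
The plan is to observe that this corollary is essentially a direct application of the preceding theorem, once one checks that the two collections of surfaces listed both fall under the definition of a product-quotient surface with $p_g=0$. So the argument has almost no independent content; it is an assembly of what has already been established.

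First, I would recall the definition given earlier in the paper: a product-quotient surface is the minimal resolution of a quotient $X = (C_1 \times C_2)/G$ whose only singularities are rational double points. For the surfaces appearing in table \ref{surfaces}, the entire classification theorem (Theorem \ref{classpgq=0}) was precisely set up so that $X$ has only rational double points and $p_g(S) = q(S) = 0$. Hence each such $S$ is by definition a product-quotient surface with $p_g = 0$, and the theorem immediately applies: $A_0(S) \cong \ZZ$.

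Next I would handle the surfaces of \cite{bacat} and \cite{bcg}. These are the surfaces isogenous to a product of curves (unmixed type) with $p_g = q = 0$, i.e.\ the case in which $G$ acts freely on $C_1 \times C_2$. In this case $X = (C_1 \times C_2)/G$ is already smooth, so in particular has (vacuously) only rational double points, and is its own minimal model. Thus these surfaces also fit the definition of product-quotient surface with $p_g = 0$, and the preceding theorem applies verbatim, yielding $A_0(S) \cong \ZZ$.

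The only tiny subtlety, and the one place worth being careful, is the chain of vanishings in the previous theorem: $T(X) = T(S)$ because $X$ has rational singularities (so resolution does not alter the Chow motive in the relevant range); $T(X)=0$ by Kimura's finite dimensionality result applied to the surjection from $C_1 \times C_2$ onto $X$; and $A_0^0(S) = T(S)$ because $q(S)=0$ kills the Albanese. All three inputs are already in place: rational double points are rational singularities, the quotient map $C_1 \times C_2 \to X$ is surjective, and $q(S)=0$ is part of the definition of the surfaces in both our table and in \cite{bacat}, \cite{bcg}. There is no real obstacle; the corollary is in effect a bookkeeping statement that every entry on our lists satisfies the hypotheses of the preceding theorem.
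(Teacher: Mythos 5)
Your argument is correct and is exactly the paper's (implicit) reasoning: the corollary follows by observing that the surfaces of table \ref{surfaces} and the smooth quotients of \cite{bacat}, \cite{bcg} are all product-quotient surfaces with $p_g=0$, so the preceding theorem applies directly. Your additional check of the chain $T(X)=T(S)$, $T(X)=0$, $A_0^0(S)=T(S)$ is just the proof of that theorem restated and raises no new issues.
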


%%%%%%%%%%%%%%%%%%%%%%%%%%%%%%%%%%%%%%%%%%%%%
\section{The surfaces}\label{thedescription}
%%%%%%%%%%%%%%%%%%%%%%%%%%%%%%%%%%%%%%%%%%%%%
This section is an expanded version of table \ref{surfaces}.
In the sequel we will follow the scheme below:

\begin{itemize}
\item[$G$:] here we write the Galois group $G$ (most of the times as 
permutation group);
\item[$T_i$:] here we specify the respective types of the pair of 
spherical generators of the group $G$;
\item[$S_1$:] here we list the first set of spherical generators;
\item[$S_2$:]  here we list the second set of spherical generators;
\item[$H_1$:] the first homology group of the surface;
\item[$\pi_1$:] the fundamental group of the surface;
\item[$\HH$:] the generators of the preimage $\HH$ of the diagonal of 
$G \times G$;
\item[] and their images in the fundamental group $\HH / \Tors(\HH)$.
\end{itemize}

\subsection{$K^2=2$, Galois group $PSL(2,7)$:}
\begin{itemize}
\item[$G$:] $\langle (34)(56),(123)(457) \rangle < {\mathfrak S}_7$
\item[$T_i$:] $(2,3,7)$ and $(4, 4, 4)$
\item[$S_1$:] (13)(26), (127)(345), (1762354)
\item[$S_2$:] (1632)(47), (1524)(36), (1743)(25)
\item[$H_1$:] $\ZZ_2^2$
\item[$\pi_1$:] $\ZZ_2^2$
\item[$\HH$:] $\HH_1 :=c_1c_3^2c_2d_2$, $\HH_2 := c_2c_3^{-1}c_1c_3d_3^{-1}$
\item[]$\HH_1 \mapsto (\overline{1},\overline{0})$, $\HH_2 \mapsto 
(\overline{0},\overline{1})$
\end{itemize}

\subsection{$K^2=2$, Galois group $PSL(2,7)$:}
\begin{itemize}
\item[$G$:] $\langle(34)(56),(123)(457) \rangle < {\mathfrak S}_7$
\item[$T_i$:] $(2,3,7)$ and $(4, 4, 4)$
\item[$S_1$:](13)(26),(127)(345),(1762354)
\item[$S_2$:](16)(2537),(1734)(26),(1452)(67)
\item[$H_1$:] $\ZZ_2^2$
\item[$\pi_1$:] $\ZZ_2^2$
\item[$\HH$:] $\HH_1 :=c_2c_1d_3^{-1}d_2d_1^{-2}$, $\HH_2 
:=c_3c_2c_1d_1d_3^{-1}d_1^{-1}$
\item[] $\HH_1 \mapsto (\overline{1},\overline{0})$, $\HH_2 \mapsto 
(\overline{0},\overline{1})$
\end{itemize}

\subsection{$K^2=2$, Galois group ${\mathfrak S}_5$:}
\begin{itemize}
\item[$G$:] ${\mathfrak S}_5$
\item[$T_i$:] $(2,4,5)$ and $(2,6,6)$
\item[$S_1$:] (25),(1435),(12534)
\item[$S_2$:] (13)(25),(142)(35),(15)(243)
\item[$H_1:$] $\ZZ_3$
\item[$\pi_1:$] $\ZZ_3$
\item[$\HH$:] $\HH_1 :=c_1c_2^{-1}c_3c_2^{-1}d_2$, $\HH_2
:=c_3^2c_2^{-1}c_3^{-1}d_1d_2^{-1}$, $\HH_3:=c_3c_2c_3^{-1}c_1d_3^{-2}$
\item[] $\HH_1 \mapsto \overline{1}$ and $\HH_2$, $\HH_3 \mapsto\overline{2}$
\end{itemize}

\subsection{$K^2=2$, Galois group ${\mathfrak A}_5$:}

\begin{itemize}
\item[$G$:] $\mathfrak{A}_5$
\item[$T_i$:] $(2,2,2,3)$ and $(2,5,5)$
\item[$S_1$:] (14)(35), (15)(24), (13)(24), (154)
\item[$S_2$:] (23)(45), (15342), (13425)
\item[$H_1$:] $\ZZ_5$
\item[$\pi_1$:] $\ZZ_5$
\item[$\HH$:] $\HH_1 := c_1c_3d_2^{-1}$, $\HH_2 := c_2c_3d_2^{-1}d_3$
\item[] $\HH_1 \mapsto \overline{1}$, $\HH_2 \mapsto \overline{2}$
\end{itemize}

\subsection{$K^2=2$, Galois group ${\mathfrak S}_4 \times \ZZ_2$:}

\begin{itemize}
\item[$G$:] $\langle (12), (13), (14), (56) \rangle < \mathfrak{S}_6$
\item[$T_i$:] $(2,2,2,4)$ and $(2,4,6)$
\item[$S_1$:] (12)(56), (34), (14), (1342)(56)
\item[$S_2$:] (13)(56), (1342), (124)(56)
\item[$H_1$:] $\ZZ_2^2$
\item[$\pi_1$:] $\ZZ_2^2$
\item[$\HH$:] $\HH_1 := c_1c_3d_3$, $\HH_2 := c_1c_4^{-1}d_1d_3d_2$, 
$\HH_3 := c_3d_2d_1d_2^{-1}d_3$
\item[] $\HH_1$, $\HH_3 \mapsto (\overline{1},\overline{0})$, $\HH_2 \mapsto
(\overline{0},\overline{1})$
\end{itemize}

\subsection{$K^2=2$, Galois group ${\mathfrak S}_3 \times {\mathfrak S}_3$:}

\begin{itemize}
\item[$G$:] $\langle (12), (13), (45), (46) \rangle < \mathfrak{S}_6$
\item[$T_i$:] $(2,2,2,3)$ and $(2,6,6)$
\item[$S_1$:] (13),(46),(23)(56),(132)(465)
\item[$S_2$:] (23)(46),(123)(45),(12)(456)
\item[$H_1$:] $\ZZ_3$
\item[$\pi_1$:] $\ZZ_3$
\item[$\HH$:] $\HH_1 := c_2c_1d_2^{-1}d_3$, $\HH_2 := 
c_3c_2d_1d_2^{-3}$, $\HH_3 :=
c_4c_1d_2^2d_1d_2^{-1}$
\item[] $\HH_1$, $\HH_3 \mapsto \overline{1}$ and $\HH_2 \mapsto \overline{0}$.
\end{itemize}

\subsection{$K^2=2$, Galois group $\ZZ_4^2$:}

\begin{itemize}
\item[$G$:] $\ZZ_4^2$
\item[$T_i$:] $(4,4,4)$ and $(4,4,4)$
\item[$S_1$:] 
$(\overline{1},\overline{3}),(\overline{1},\overline{0}),(\overline{2},\overline{1})$
\item[$S_2$:] 
$(\overline{3},\overline{2}),(\overline{0},\overline{1}),(\overline{1},\overline{1})$
\item[$H_1$:] $\ZZ_2^3$
\item[$\pi_1$:] $\ZZ_2^3$
\item[$\HH$:] $\HH_1 := c_1d_2d_1^{-1}$, $\HH_2 := c_2c_1^{-1}d_2$, 
$\HH_3 := c_1d_1^{-1}d_2$
\item[] $\HH_1 \mapsto (\overline{1},\overline{0},\overline{0})$, 
$\HH_2 \mapsto
(\overline{0},\overline{1},\overline{0})$ , $\HH_3 \mapsto
(\overline{0},\overline{0},\overline{1})$
\end{itemize}

\subsection{$K^2=2$, Galois group $D_4 \times \ZZ_2$:}

\begin{itemize}
\item[$G$:] $\langle (1234), (14)(23), (56) \rangle < {\mathfrak S}_6$
\item[$T_i$:] $(2,2,2,4)$ and $(2,2,2,4)$
\item[$S_1$:] (24),(56),(12)(34),(1432)(56)
\item[$S_2$:] (12)(34),(13)(56),(13)(24)(56),(1234)
\item[$H_1$:] $\ZZ_4 \times \ZZ_2$
\item[$\pi_1$:] $\ZZ_4 \times \ZZ_2$
\item[$\HH$:] $\HH_1 := c_1c_3d_4$, $\HH_2 := c_1d_1d_4^{-1}$, $\HH_3 
:= c_4d_3d_4$,
$H_4 := c_1c_4^{-1}c_1d_3d_4$
\item[] $\HH_1 \mapsto (\overline{1},\overline{0})$, $\HH_2\mapsto 
(\overline{3},\overline{0})$,
$\HH_3$, $\HH_4 \mapsto(\overline{1},\overline{1})$
\end{itemize}

\subsection{$K^2=4$, Galois group ${\mathfrak S}_5$:}
\begin{itemize}
\item[$G:$] ${\mathfrak S}_5$
\item[$T_i$:] $(2,4,5)$ and $(3,6,6)$
\item[$S_1$:] (25), (1435), (12534)
\item[$S_2$:] (132), (135)(24), (15)(243)
\item[$H_1$:] $\ZZ_2^3$
\item[$\pi_1$:] $\ZZ^2
\rtimes_\varphi \ZZ_3$, $\varphi(\overline{1})=
\begin{pmatrix}
0&1\\
-1&-1
\end{pmatrix}
$.
\item[$\HH$:] $\HH_1 := c_3^2c_2^{-1}d_2$, $\HH_2 := 
c_3^{-1}c_2c_3^{-1}d_3^{-1}$
\item[] $\HH_1 \mapsto \overline{1}$, $\HH_2 \mapsto (-1,0)\cdot \overline{2}$
\end{itemize}

\subsection{$K^2=4$, Galois group ${\mathfrak A}_5$:}

\begin{itemize}
\item[$G$:] $\mathfrak{A}_5$
\item[$T_i$:] $(2,2,3,3)$ and $(2,5,5)$
\item[$S_1$:] (15)(34), (12)(35), (354), (125)
\item[$S_2$:] (23)(45), (15342), (13425)
\item[$H_1$:] $\ZZ_{15}$
\item[$\pi_1$:] $\ZZ_{15}$
\item[$\HH$:] $\HH_1 := c_4d_2d_3^{-1}$, $\HH_2 := 
c_3^{-1}d_2^2d_3^{-1}$, $\HH_3 :=
c_3c_1d_2^{-1}d_3$
\item[] $\HH_1 \mapsto \overline{1}$, $\HH_2 \mapsto \overline{4}$, 
$\HH_3 \mapsto \overline{14}$
\end{itemize}

\subsection{$K^2=4$, Galois group ${\mathfrak S}_4 \times \ZZ_2$:}

\begin{itemize}
\item[$G$:] $\langle (12), (13), (14), (56) \rangle < \mathfrak{S}_6$
\item[$T_i$:] $(2,2,4,4)$ and $(2,4,6)$
\item[$S_1$:] (24), (24), (1324)(56), (1423)(56)
\item[$S_2$:] (13)(56), (1342), (124)(56)
\item[$H_1$:] $\ZZ_2^2 \times \ZZ_4$
\item[$\pi_1$:] $\ZZ^2
\rtimes_{\varphi} \ZZ_4$, $\varphi(\overline{1}) = -Id$
\item[$\HH$:] $\HH_1 := c_1c_4^{-1}c_1d_1d_3^2$,
$\HH_2 := c_3c_1d_3^{-1}d_1d_2^{-1}d_3$,
$\HH_3 := c_1c_3^{-1}c_1d_3^{-2}d_1$
\item[]
$\HH_1 \mapsto \overline{1}$, $\HH_2 \mapsto (1,0)$,
   $\HH_3 \mapsto (0,1)\cdot \overline{1}$
\end{itemize}

\subsection{$K^2=4$, Galois group ${\mathfrak S}_4 \times \ZZ_2$:}
\begin{itemize}
\item[$G$:] $\langle (12), (13), (14), (56)\rangle < {\mathfrak S}_6$
\item[$T_i$:] $(2,2,2,2,2)$ and $(2,4,6)$
\item[$S_1$:] (12)(34)(56), (34)(56), (13), (23), (13)
\item[$S_2$:] (13)(56), (1342), (124)(56)
\item[$H_1$:] $\ZZ_2^3$
\item[$\pi_1$:] $\pi_1= \ZZ^2 \rtimes_\varphi \ZZ_2$, 
$\varphi(\overline{1})=-Id$
\item[$\HH$:] $\HH_1 := c_3d_3^2d_2^{-1}$, $\HH_2 := c_1c_4c_2d_3^2$, 
$\HH_3 := c_1c_4d_1d_3^2$,
$\HH_4 := c_2c_3c_1d_3^2$
\item[] $\HH_1 \mapsto \overline{1}$, $\HH_2$, $\HH_3 \mapsto(1,0)$, 
$\HH_4 \mapsto (0,1)$.
\end{itemize}

\subsection{$K^2=4$, Galois group $\ZZ_2^4 \rtimes_\varphi \ZZ_2$:} 
$\varphi(\overline{1})=
\begin{pmatrix}
\overline{1}&0\\
\overline{1}&\overline{1}
\end{pmatrix}
\oplus
\begin{pmatrix}
\overline{1}&0\\
\overline{1}&\overline{1}
\end{pmatrix}
$

\begin{itemize}
\item[$G$:] $\langle (12)(34), (14)(23), (56)(78), (58)(67), (13)(57) 
\rangle< {\mathfrak S}_8$
\item[$T_i$:] $(2,2,2,4)$, and $(2,2,2,4)$
\item[$S_1$:] (24)(68), (12)(34)(56)(78), (12)(34), (24)(5876)
\item[$S_2$:] (12)(34)(57)(68), (56)(78), (24)(68), (1234)(5876)
\item[$H_1$:] $\ZZ_4^2$
\item[$\pi_1$:] $\langle 
x_1,x_2,y|x_i^4,y^2,[x_i,y],x_1^{-1}x_2^{-1}x_1x_2y\rangle$ of order 
$32$
\item[$\HH$:] $\HH_1 := c_4c_3d_4$, $\HH_2 := c_4d_1d_4^{-1}$, $\HH_3 
:= c_1c_4^{-1}d_1d_3d_4$
\item[] $\HH_1 \mapsto x_1$, $\HH_2 \mapsto x_2$, $\HH_3 \mapsto x_2y$
\end{itemize}

\subsection{$K^2=4$, Galois group ${\mathfrak S}_4$:}

\begin{itemize}
\item[$G$:] ${\mathfrak S}_4$
\item[$T_i$:] $(2,2,2,2,2)$ and $(3,4,4)$
\item[$S_1$:] (13), (14), (12)(34), (12), (14)
\item[$S_2$:] (132), (1432), (1342)
\item[$H_1$:] $\ZZ_2^2 \times \ZZ_4$
\item[$\pi_1$:] $\ZZ^2 \rtimes_\varphi \ZZ_4$, $\varphi(\overline{1})=-Id$
\item[$\HH$:] $\HH_1 := c_5c_2$, $\HH_2 := c_2c_3d_3$, $\HH_3 :=
c_1c_2d_2d_3^{-1}$, $\HH_4 := c_2c_1c_2d_1^{-1}d_2$
\item[] $\HH_1 \mapsto (1,0)$, $\HH_2 \mapsto (0,-1)\cdot \overline{1}$, $\HH_3
\mapsto (0,-1) \cdot \overline{2}$, and $\HH_4 \mapsto \overline{1}$
\end{itemize}

\subsection{$K^2=4$, Galois group ${\mathfrak S}_3 \times \ZZ_3$:}

\begin{itemize}
\item[$G$:] $\langle (12), (13), (456) \rangle < {\mathfrak S}_6$
\item[$T_i$:] $(2,2,3,3)$ and $(3,6,6)$
\item[$S_1$:] (13), (13), (123)(456), (132)(465)
\item[$S_2$:] (132), (23)(465), (12)(456)
\item[$H_1$:] $\ZZ_3^2$
\item[$\pi_1$:] $\ZZ^2 \rtimes_\varphi \ZZ_3$, $\varphi(\overline{1})=
\begin{pmatrix}
0&-1\\
1&-1
\end{pmatrix}$
\item[$\HH$:] $\HH_1 := c_2c_1$, $\HH_2 := c_3c_1d_3$, $\HH_3 := c_4c_1d_2$,
$\HH_4 := c_4c_1d_3^{-1}d_1$
\item[] $\HH_1\mapsto Id$, $\HH_2 \mapsto \overline{1}$, $\HH_3 \mapsto
(1,0)\cdot \overline{1}$, $\HH_4 \mapsto (-1,0)$
\end{itemize}

\subsection{$K^2=4$, Galois group $\ZZ_3^2 \rtimes_\varphi \ZZ_2$:}
$\varphi(\overline{1})=-Id$

\begin{itemize}
\item[$G$] $\langle (123), (456), (12)(45) \rangle < {\mathfrak S}_6$
\item[$T_i$:] $(2,2,3,3)$ and $(2,2,3,3)$
\item[$S_1$:] (23)(56), (23)(45), (123)(465), (132)
\item[$S_2$:] (23)(56), (12)(46), (132)(465), (465)
\item[$H_1$:] $\ZZ_3^3$
\item[$\pi_1$:] $\ZZ_3^3$
\item[$\HH$:]
$\HH_1 := c_1c_2d_4^{-1}$,
$\HH_2 := c_2d_1d_4^{-1}$,
$\HH_3 := c_3c_2c_1d_3^{-1}$,
$\HH_4 := c_3c_4^{-1}d_3$
\item[] $\HH_1$, $\HH_2 \mapsto (\overline{1},\overline{0},\overline{0})$,
$\HH_3 \mapsto (\overline{0},\overline{1},\overline{0})$,  $H_4 \mapsto
(\overline{0},\overline{0},\overline{1})$
\end{itemize}

\subsection{$K^2=4$, Galois group $D_4 \times \ZZ_2$:}\label{thestrangeone}

\begin{itemize}
\item[$G$:] $\langle (1234), (12)(34), (56) \rangle \rangle < {\mathfrak S}_6$
\item[$T_i$:] $(2,2,2,2,2)$ and $(2,2,2,4)$
\item[$S_1$:] (56), (24), (12)(34), (12)(34)(56), (24)
\item[$S_2$:] (13)(56), (14)(23)(56), (13)(24)(56), (1234)(56)
\item[$H_1$:] $\ZZ_2^2 \times \ZZ_4$
\item[$\pi_1$:] $\langle x_1,x_2,x_3|x_3^2,x_2^4,
     x_1^2 x_2^2,
     (x_1^{-1} x_3 x_2)^2,
     (x_2^{-1} x_1^{-1} x_3)^2,
     x_3 x_2 x_1 x_3 x_2^{-1} x_1^{-1}\rangle$
\item[$\HH$:] $\HH_1 := c_1c_3c_4$, $\HH_2 := c_2d_1d_3$,
$\HH_3 := c_3c_2c_1d_4^{-1}$, $\HH_4 := c_3d_1d_4^{-1}$
\item[]
$\HH \rightarrow \pi_1$ maps $H_1 \mapsto x_1x_3^{-1}x_2x_3^{-1}$,
$H_2 \mapsto x_1$,  $H_3 \mapsto x_2$,  $H_4 \mapsto x_3$
\end{itemize}

The normal subgroups of $\pi_1$ of minimal index
among the normal subgroups with free abelianization are $4$, all
isomorphic to $\ZZ^2$ and of index $8$. The quotient of $\pi_1$ by
them is either $D_4$ (in two cases) or $\ZZ_2 \times \ZZ_4$ (in two
cases). This in particular shows that this group is different from all
fundamental groups of surfaces with $p_g=q=0$ and $K^2=4$ we know.

\subsection{$K^2=4$, Galois group $\ZZ_4 \times \ZZ_2$}\label{thestrangetwo}

\begin{itemize}
\item[$G$:] $\ZZ_4 \times \ZZ_2$
\item[$T_i$:] $(2,2,4,4)$ and $(2,2,4,4)$
\item[$S_1$:] $(\overline{2},\overline{1})$, $(\overline{2},\overline{1})$,
$(\overline{3},\overline{1})$, $(\overline{1},\overline{1})$
\item[$S_2$:] $(\overline{0},\overline{1})$, $(\overline{0},\overline{1})$,
$(\overline{3},\overline{0})$, $(\overline{1},\overline{0})$
\item[$H_1$:] $\ZZ_2^3 \times \ZZ_4$
\item[$\pi_1$:] $\langle 
x_1,x_2,x_3,x_4|(x_3x_4)^2,(x_3^{-1}x_4)^2,x_2x_4^{-1}x_2x_4, 
[x_1,x_2], [x_2,x_3],$
\item[] $[x_1,x_4], x_1^{-1}x_3x_1^{-1}x_3^{-1},
x_4^{-1}x_3^{-1}x_1x_3x_4x_1,
x_2x_4^{-1}x_3^{-1}x_4^{-1}x_2^{-1}x_3^{-1},$
\item[] $x_1^{-1}x_4x_1^{-1}x_3^{-1}x_4x_2x_1^{-2}x_3^{-1}x_2^{-1} \rangle$
\item[$\HH$:] $\HH_1 := c_2c_1$,
$\HH_2 := d_2d_1$,
$\HH_3 := c_3d_1d_4^{-1}$,
$\HH_4 := c_3c_1d_4$,
$\HH_5 := c_1d_4^2d_1$
\item[] $\forall i \leq 4:\ \HH_i \mapsto x_i$, $\HH_5 \mapsto x_3x_4$
\end{itemize}

There is an exact sequence
$$\begin{matrix}
1 \ra & \ZZ^4 &\ra &\pi_1 &\ra \ZZ_2^2 \ra 1\\
& e_1 &\mapsto &x_1&\\
& e_2 &\mapsto &x_2&\\
& e_3 &\mapsto &x_3^2&\\
& e_4 &\mapsto &x_4^2&
\end{matrix}.$$

The image of $\ZZ^4$ in $\pi_1$ is the only normal subgroup of index $\leq 4$
with free abelianization.

\subsection{$K^2=4$, Galois group $\ZZ_2^3$}\label{thestrangethree}

\begin{itemize}
\item[$G$:] $\ZZ_2^3$
\item[$T_i$:]$(2,2,2,2,2)$ and $(2,2,2,2,2)$
\item[$S_1$:]$(\overline{0},\overline{0},\overline{1})$, 
$(\overline{0},\overline{1},\overline{1})$, 
$(\overline{0},\overline{0},\overline{1})$, 
$(\overline{1},\overline{1},\overline{1})$, 
$(\overline{1},\overline{0},\overline{0})$
\item[$S_2$:] $(\overline{1},\overline{0},\overline{0})$, 
$(\overline{1},\overline{0},\overline{1})$, 
$(\overline{0},\overline{1},\overline{0})$, 
$(\overline{1},\overline{1},\overline{0})$, 
$(\overline{1},\overline{0},\overline{1})$
\item[$H_1$:] $\ZZ_2^3 \times \ZZ_4$
\item[$\pi_1$:] $\langle x_1,x_2,x_3,x_4|x_2^2,(x_2x_4)^2, 
(x_2x_1^2)^2, (x_2x_3^2)^2, [x_1,x_3], x_4x_3^{-1}x_4x_3,$
\item[] $x_1^{-1}x_4x_1x_4, x_2x_1x_3x_2x_3x_1, 
x_3^{-1}x_1x_2x_1x_3^{-1}x_2\rangle$
\item[$\HH$:] $\HH_1 := c_1c_2d_3$,
$\HH_2 := c_1c_5d_2$,
$\HH_3 := c_1d_1d_2$,
$\HH_4 := c_2c_3d_3$,
$\HH_5 := c_5d_3d_4$
\item[] $\HH_1 \mapsto x_1$,
$\HH_2 \mapsto x_2$,  $\HH_3 \mapsto x_2^{-1}$,  $\HH_4 \mapsto x_3$, 
$\HH_5 \mapsto x_4$
\end{itemize}

There is an exact sequence
$$\begin{matrix}
1 \ra & \ZZ^4 &\ra &\pi_1 &\ra \ZZ_2^2 \ra 1\\
& e_1 &\mapsto &x_3x_1&\\
& e_2 &\mapsto &x_4&\\
& e_3 &\mapsto &x_1^2&\\
& e_4 &\mapsto &(x_2x_1)^2&
\end{matrix}.$$

The image of $\ZZ^4$ in $\pi_1$ is the only normal subgroup of index $\leq 4$
with free abelianization.

This group is isomorphic to the one in \ref{thestrangetwo}.

\subsection{$K^2=6$, Galois group ${\mathfrak A}_6$:}

\begin{itemize}
\item[$G$:] ${\mathfrak A}_6$
\item[$T_i$:] $(2,5,5)$ and $(3,3,4)$
\item[$S_1$:] (16)(34), (25436), (16452)
\item[$S_2$:] (156), (146)(235), (1532)(46)
\item[$H_1$:] $\ZZ_{15}$
\item[$\pi_1$:] ${\mathfrak A}_4 \times \ZZ_5$
\item[$\HH$:] $\HH_1 := c_2c_1d_2d_3^{-1}$, $\HH_2 := 
c_2c_3^{-1}c_1c_2^{-2}d_3^{-1}d_1d_3^{-1}$
\item[] $\HH_1\mapsto (234)\cdot \overline{1}$, $\HH_2 \mapsto (123)$
\end{itemize}

\subsection{$K^2=6$, Galois group ${\mathfrak A}_6$:}

\begin{itemize}
\item[$G$:] ${\mathfrak A}_6$
\item[$T_i$:] $(2,5,5)$ and $(3,3,4)$
\item[$S_1$:] (16)(34), (25436), (16452)
\item[$S_2$:] (123)(456), (125), (1465)(23)
\item[$H_1$:] $\ZZ_{15}$
\item[$\pi_1$:] ${\mathfrak A}_4 \times \ZZ_5$
\item[$\HH$:] $\HH_1 := c_2^{-2}d_1d_2^{-1}$, $\HH_2 := 
c_2c_3^{-1}c_1d_1d_3^{-2}$
\item[] $\HH_1 \mapsto (234)\cdot \overline{1}$,
$\HH_2 \mapsto (123)\cdot \overline{4}$
\end{itemize}

\subsection{$K^2=6$, Galois group ${\mathfrak S}_5 \times \ZZ_2$:}

\begin{itemize}
\item[$G$:] $\langle (12), (13), (14), (15), (67) \rangle < {\mathfrak S}_7$
\item[$T_i$:] $(2,4,6)$ and $(2,4,10)$
\item[$S_1$:] (13)(45)(67), (1524)(67), (153)(24)
\item[$S_2$:] (25)(67), (1432), (15234)(67)
\item[$H_1$:] $\ZZ_2 \times \ZZ_4$
\item[$\pi_1$:] ${\mathfrak S}_3 \times D_{4,5,-1}$, recall that $D_{4,5,-1}=
\langle x,y|x^4,y^5,xyx^{-1}y \rangle$
\item[$\HH$:] $\HH_1 := c_2c_1d_2d_1d_2^{-1}d_3^{-1}$,  $\HH_2 :=
c_2c_3^{-2}d_2^{-1}d_3$, $\HH_3 := c_2^2c_1c_2^{-1}d_2^2d_3^{-1}d_1$
\item[] $\HH_1 \mapsto (123)xy$, $\HH_2 \mapsto (12)x$,  $\HH_3 
\mapsto (132)xy^3$
\end{itemize}

\subsection{$K^2=6$, Galois group $PSL(2,7)$:}
\begin{itemize}
\item[$G$:] $\langle (34)(56),(123)(457) \rangle < {\mathfrak S}_7$
\item[$T_i$:] $(2,7,7)$ and $(3,3,4)$
\item[$S_1$:] (27)(46), (1235674), (1653742)
\item[$S_2$:] (157)(234), (145)(367), (2476)(35)
\item[$H_1$:] $\ZZ_{21}$
\item[$\pi_1$:] ${\mathfrak A}_4 \times \ZZ_7$
\item[$\HH$:] $\HH_1 := c_2^3d_3^{-1}d_1$,  $\HH_2 := 
c_3^3d_1d_3^{-1}$, $\HH_3 := c_3^2c_2^{-1}d_1d_3$
\item[] $H_1 \mapsto (123)\cdot \overline{1}$, $H_2$, $H_3 \mapsto 
(134) \cdot \overline{6}$
\end{itemize}

\subsection{$K^2=6$, Galois group $PSL(2,7)$:}
\begin{itemize}
\item[$G$:] $\langle (34)(56),(123)(457) \rangle < {\mathfrak S}_7$
\item[$T_i$:] $(2,7,7)$ and $(3,3,4)$
\item[$S_1$:] (27)(46), (1235674), (1653742)
\item[$S_2$:] (127)(364), (157)(234), (1263)(57)
\item[$H_1$:] $\ZZ_{21}$
\item[$\pi_1$:] ${\mathfrak A}_4 \times \ZZ_7$
\item[$\HH$:] $\HH_1 := c_3c_2^{-1}d_2d_1^{-1}$, $\HH_2 := 
c_1c_3^2d_1^{-1}d_3$, $\HH_3 := c_2^2c_1d_1d_3d_1^{-1}$
\item[] $\HH_1 \mapsto (134) \cdot \overline{1}$,
$\HH_2 \mapsto (132) \cdot \overline{1}$, $\HH_3 \mapsto (12)(34) 
\cdot \overline{6}$
\end{itemize}

\subsection{$K^2=6$, Galois group ${\mathfrak A}_5$:}
\begin{itemize}
\item[$G$:] ${\mathfrak A}_5$
\item[$T_i$:] $(2,3,3,3)$ and $(2,5,5)$
\item[$S_1$:] (13)(24), (123), (235), (254)
\item[$S_2$:] (23)(45),(15342), (13425)
\item[$H_1$:] $\ZZ_3 \times \ZZ_{15}$
\item[$\pi_1$:] $\ZZ^2 \rtimes_\varphi \ZZ_{15}$, $\varphi(\overline{1})=
\begin{pmatrix}
0&-1\\
1&-1
\end{pmatrix}$
\item[$\HH$:] $\HH_1= c_1d_1d_3d_2d_3^{-1}$, $\HH_2 = 
c_1c_4^{-1}c_2^{-1}d_2d_3^{-1}$, $\HH_3 = c_1c_2^{-1}c_1d_1d_3^2$
\item[] $\HH_1 \mapsto \overline{12}$,
$\HH_2 \mapsto (1,0) \cdot \overline{14}$, $\HH_3 \mapsto \overline{1}$
\end{itemize}

\subsection{$K^2=6$, Galois group ${\mathfrak S}_4 \times \ZZ_2$:}
\begin{itemize}
\item[$G$:] $\langle  (12), (13), (14), (56) \rangle < {\mathfrak S}_6$
\item[$T_i$:] $(2,2,2,2,4)$ and $(2,4,6)$
\item[$S_1$:] (23)(56), (12)(34)(56), (13)(56), (13)(56), (1342)
\item[$S_2$:] (13), (1324)(56), (142)(56)
\item[$H_1$:] $\ZZ_2^3 \times \ZZ_4$
\item[$\pi_1$:] $\langle x_1,x_2,x_3,x_4| x_4^4, x_4^{-2}x_3^2, 
x_3^{-1}x_2^{-1}x_3x_2^{-1}$,
\item[] $x_1^{-1} x_3 x_4^{-1} x_1^{-1} x_3^{-1} x_4,
     x_2 x_3 x_1^{-1} x_2 x_3^{-1} x_1^{-1},
     x_1 x_4^{-1} x_3^{-1} x_1 x_4 x_3\rangle$
\item[$\HH$:]  $\HH_1= c_4c_3$, $\HH_2 = c_3c_2c_1d_3$,
$\HH_3 = c_5d_2d_3^{-1}$, $\HH_4 = c_1d_2^{-1}d_3^2$
\item[]  $\HH_i \mapsto x_i$
\end{itemize}

There is an exact sequence
$$\begin{matrix}
1 \ra & \Pi_2 &\ra &\pi_1 &\ra \ZZ_2 \times \ZZ_4 \ra 1\\
& \alpha_1 &\mapsto &x_3x_4x_2^{-1}x_4^{-1}x_3^{-1}&\\
& \beta_1 &\mapsto &x_1[x_4^{-1},x_3^{-1}]&\\
& \alpha_2 &\mapsto &x_1&\\
& \beta_2 &\mapsto &x_2^{-1}&
\end{matrix}.$$

The image of $\Pi_2$ in $\pi_1$ is of minimal index
among the  normal subgroup of $\pi_1$ with free
abelianization.

\subsection{$K^2=6$, Galois group $D_4 \times \ZZ_2$:}

\begin{itemize}
\item[$G$:] $\langle (1234), (12)(34), (56) \rangle < {\mathfrak S}_6$
\item[$T_i$:] $(2,2,2,2,4)$ and $(2,2,2,4)$
\item[$S_1$:] (56), (56), (12)(34)(56), (13)(56), (1432)
\item[$S_2$:] (24), (14)(23), (13)(24)(56), (1432)(56)
\item[$H_1$:] $\ZZ_2^2 \times \ZZ_4^2$
\item[$\pi_1$:] $\langle x_1,x_2,x_3,x_4| x_4^2, (x_1^{-1}x_4)^2, 
(x_3^2x_4)^2, (x_4x_2^2)^2$,
\item[] $[x_1,x_2^2], [x_1,x_3^2], [x_2,x_3^2],  [x_3,x_2^2], 
x_1^{-1}x_2x_3^{-1}x_1^{-1}x_2^{-1}x_3 \rangle$
\item[$\HH$:] $\HH_1= c_2c_1$, $\HH_2 = c_3d_1d_4^{-1}$,
$\HH_3 = c_4d_1d_3$, $\HH_4 = c_1d_4^2d_3$
\item[] $\HH_i \mapsto x_i$
\end{itemize}

There is an exact sequence
$$\begin{matrix}
1 \ra & \ZZ^2 \times \Pi_2 &\ra &\pi_1 &\ra \ZZ_2^2 \ra 1\\
& ((1,0),id) &\mapsto &x_2^2&\\
& ((0,1),id) &\mapsto &x_3^2&\\
& ((0,0),\alpha_1)&\mapsto &x_1x_2&\\
& ((0,0),\beta_1) &\mapsto &x_3^{-1}x_1^{-1}&\\
& ((0,0),\alpha_2)&\mapsto &x_4x_2x_4x_1^{-1}&\\
& ((0,0),\beta_2) &\mapsto &x_1x_4^{-1}x_3^{-1}x_4^{-1}&
\end{matrix}.$$

The image of $\ZZ^2 \times \Pi_2$ in $\pi_1$ is the only normal 
subgroup of index $\leq 4$ with free abelianization.

%%%%%%%%%%%%%%%%%%%%%%%%%%%%%%%%%%%%%%%%%%%%%%%%%%%%%%

\bigskip
\noindent
{\bf Authors Adresses:}

\noindent
Ingrid Bauer, Fabrizio Catanese \\
Lehrstuhl Mathematik VIII, Mathematisches Institut der Universit\"at 
Bayreuth;\\
Universit\"atsstr. 30;
D-95447 Bayreuth, Germany\\
Fritz Grunewald \\
Mathematisches Institut der Heinrich-Heine-Universit\"at D\"usseldorf;\\
Universit\"atsstr. 1;
D-40225 D\"usseldorf, Germany \\
Roberto  Pignatelli \\
Dipartimento di Matematica della Universit\`a di Trento;\\
Via Sommarive 14;
I-38123 Trento (TN), Italy\\

\appendix
\section*{Appendix: MAGMA scripts} {\small
\begin{verbatim}

/* The first script, ListOfTypes, produces, for each value of K^2=8-t,
a list containing all signatures fulfilling the conditions i), ii) and iii)
in subsection 5.2. We represent the signatures by sequences of 7
positive integers [m1,..,m7], adding some "1" if the length of the
sequence is shorter. E.g., the signature (2,3,7) is represented by the
sequence [1,1,1,1,2,3,7]. Note that by lemma 5.9 the length of the
signatures is bounden by 7.

The script takes all nondecreasing sequences of 7 positive integers
between 1 and  3(K^2+2) (cf. lemma 5.9) and discards those
which do not fulfill all three conditions.
The script uses two auxiliary functions Theta and Alpha computing the
respective numbers (see (17) in subsection 5.2). */

Theta:=function(sig)
   a:=5;
   for m in sig do a:=a-1/m; end for;
   return a;
end function;

Alpha:=func<sig,Ksquare | Ksquare/(4*Theta(sig))>;

ListOfTypes:=function(Ksquare: MaximalCoefficient:=3*(Ksquare+2))
      list:=[* *];
      for m1 in [ 1..3*(Ksquare+2)] do
      for m2 in [m1..3*(Ksquare+2)] do
      for m3 in [m2..3*(Ksquare+2)] do
      for m4 in [m3..3*(Ksquare+2)] do
      for m5 in [m4..3*(Ksquare+2)] do
      for m6 in [m5..3*(Ksquare+2)] do
      for m7 in [m6..3*(Ksquare+2)] do
        sig:=[Integers() | m1,m2,m3,m4,m5,m6,m7 ];
        if Theta(sig) ne 0 then A:=Alpha(sig,Ksquare);
        if A in IntegerRing() and A ge 1 then
        if forall{m : m in sig | 2*A/m in IntegerRing()} then bads:=0;
           for m in sig do
             if A/m notin IntegerRing() then bads +:=1;
           end if; end for;
        if bads le 4-Ksquare/2 then Append(~list,sig);
        end if; end if; end if; end if;
      end for; end for; end for; end for; end for; end for; end for;
return list;
end function;

/* The second important script is ListGroups, which returns for each
K^2, the list of all triples [G,T1,T2] where T1 and T2 are two
signatures in ListOfTypes(K^2) and G is a group of the prescribed
order 8*Alpha(T1)*Alpha(T2)/K^2 having sets of spherical generators of
both signatures. Note that the script skips the cases: |G| = 1024,
1152, or bigger than 2000, cases which we have excluded by hand (cf.
subsection(5.3)).

It uses two subscripts:
   ElsOfOrd which returns the set of elements of a group of a given
order,
   ExistSphericalGenerators, a boolean function which answers if a
group has a system of generators of a prescribed signature */

ElsOfOrd:=function(G,order)
      Els:={ };
      for g in G do if Order(g) eq order then Include(~Els, g);
      end if; end for;
      return Els; end function;

ExistSphericalGenerators:=function(G,sig)
      test:=false;
      for x1 in ElsOfOrd(G,sig[1]) do
      for x2 in ElsOfOrd(G,sig[2]) do
      for x3 in ElsOfOrd(G,sig[3]) do
      for x4 in ElsOfOrd(G,sig[4]) do
      for x5 in ElsOfOrd(G,sig[5]) do
      for x6 in ElsOfOrd(G,sig[6]) do
        if Order(x1*x2*x3*x4*x5*x6) eq sig[7]
           and #sub<G|x1,x2,x3,x4,x5,x6> eq #G
           then test:=true; break x1;
        end if;
      end for; end for; end for; end for; end for; end for;
return test;
end function;

ListGroups:=function(Ksquare)
      list:=[* *]; L:=ListOfTypes(Ksquare); L1:=L;
        for T1 in L do
        for T2 in L1 do
          ord:=8*Alpha(T1,Ksquare)*Alpha(T2,Ksquare)/Ksquare;
        if ord in IntegerRing() and ord le 200 and ord notin {1024, 1152} then
        for G in SmallGroups(IntegerRing()!ord: Warning := false) do
        if ExistSphericalGenerators(G,T1) then
        if ExistSphericalGenerators(G,T2) then
          Append(~list,[* G,T1,T2 *]);
        end if; end if; end for; end if; end for;
        L1:=Reverse(Prune(Reverse(L1)));
        end for;
return list;
end function;

/* Each triple [G,T1,T2] corresponds to many surfaces, one for each
pair of spherical generators of G of the prescribed signatures, but
still these surfaces can be too singular.

The script ExistingNodalSurfaces returns all triples in the output of
ListGroups such that at least one of the surfaces has exactly the
prescribed number 8-K^2 of nodes as singularities.
It uses three more scripts.

FSGUpToConjugation returns a list of spherical generators of the group
of given type, and more precisely, one for each conjugacy
class. It divides the set of spherical generators into two sets,
Heaven and Hell, according to the rule "if a conjugate of the set I'm
considering is in Heaven, go to Hell, else to Heaven", and returns Heaven.

CheckSingsEl is a Boolean function answering if the surface S given by
two sets of generators of G has the right number of nodes and no other
singularities.
It uses the following: given a pair of spherical generators, the singular
points of the resulting surface S come from pairs g,h of elements, one
for each set, such that there are n,m with g^n nontrivial and
conjugated to h^m. If the order of g^n is 2 then S has some nodes,
else S has worse singularities, a contradiction.

Checksings is a Boolean function answering if there is apair of spherical
generators of G of signatures sig1 and sig2 giving a surface with the
right number of nodes.
To save time it checks only one set of spherical generators for each
conjugacy class, using FSGUpToConjugation. In fact, the isomorphism
class of the surface obtained by a pair of spherical generators does
not change if we act on one of them by an inner automorphism. */


FSGUpToConjugation:=function(G,sig)
     Heaven:={@ @}; Hell:={@ @};
      for x1 in ElsOfOrd(G,sig[1]) do
      for x2 in ElsOfOrd(G,sig[2]) do
      for x3 in ElsOfOrd(G,sig[3]) do
      for x4 in ElsOfOrd(G,sig[4]) do
      for x5 in ElsOfOrd(G,sig[5]) do
      for x6 in ElsOfOrd(G,sig[6]) do x7:=x1*x2*x3*x4*x5*x6;
        if Order(x7) eq sig[7] then
        if #sub<G|x1,x2,x3,x4,x5,x6> eq #G then
        if [x1,x2,x3,x4,x5,x6,x7^-1] notin Hell then
         Include(~Heaven,[x1,x2,x3,x4,x5,x6,x7^-1]);
          for g in G do
          Include(~Hell, [x1^g,x2^g,x3^g,x4^g,x5^g,x6^g,(x7^-1)^g]);
           end for;
          end if; end if; end if;
        end for; end for; end for; end for; end for; end for;
return Heaven;
end function;

CheckSingsEl:=function(G,seq1,seq2,Ksquare)
      Answer:=true; Nodes:=0;
        for g1 in seq1 do
        for g2 in seq2 do
        for d1 in [1..Order(g1)-1] do
        for d2 in [1..Order(g2)-1] do
        if IsConjugate(G,g1^d1,g2^d2) then
        if Order(g1^d1) ge 3 then Answer:=false; break g1;
        elif Order(g1^d1) eq 2 then
          Nodes +:=Order(G)/(2*d1*d2*#Conjugates(G,g1^d1));
        if Nodes gt 8-Ksquare then Answer:=false; break g1;
        end if; end if; end if; end for; end for; end for; end for;
     return Answer and Nodes eq 8-Ksquare;
end function;

CheckSings:=function(G,sig1,sig2,Ksquare)
      test:=false;
        for gens1 in FSGUpToConjugation(G,sig1) do
        for gens2 in FSGUpToConjugation(G,sig2) do
        if CheckSingsEl(G,gens1,gens2,Ksquare) then test:=true; break gens1;
        end if; end for; end for; return test;
end function;

ExistingNodalSurfaces:=function(Ksquare)
      M:=[* *];
      for triple in ListGroups(Ksquare) do
         G:=triple[1]; T1:=triple[2]; T2:=triple[3];
      if CheckSings(G,T1,T2,Ksquare) then Append(~M, triple);
      end if; end for; return M; end function;

/* ExistingNodalSurfaces produces a list of triples [G,T1,T2] more
precisely 7 for K^2=2, 11 for K^2=4 and 6 for K^2=6, one for each row
of table 2. To each of these triples correspond at least a surface
with p_g=q=0.
We need a script which finds all surfaces, modulo isomorphisms. Recall
that two of these surfaces are isomorphic if and only if the two pairs
of  spherical generators are equivalent for the equivalence
relation generated by Hurwitz moves on each set and by the
automorphism group of G (acting simultaneously on the pair). This is
done by FindAllComponents, which needs 4 new scripts.

AutGr  describes the automorphism group of G as set.

HurwitzMove runs an Hurwitz move on a sequence of elements of a
group.

HurwitzOrbit computes the orbit of a sequence of elements of
the group under Hurwitz moves, and then returns (to save memory)
the subset of the sequences such that the  corresponding sequence of integers,
given by the orders of the group elements, is non
decreasing.

SphGens gives all sets of spherical generators of a group of
prescribed signature of length 5. Note that we have reduced the
length of the signature from 7 to 5, because the output of
ListOfTypes (see table 3) shows that the bound of r in lemma 5.9 can
be sharpened to 5.

Finally FindAllComponents produces (fixing the group and the
signatures) one pair of spherical generators for each isomorphism
class.
Running FindAllComponents on all 7+11+6 triples obtained by
ExixtingNodalSurfaces (remembering that we have to shorten the
signatures removing the first two 1) we always find one surface
except in three cases, giving two surfaces.
*/

AutGr:=function(G)
      Aut:=AutomorphismGroup(G); A:={ Aut!1 };
      repeat
        for g1 in Generators(Aut) do
        for g2 in A do
          Include (~A,g1*g2);
        end for; end for;
      until  #A eq #Aut; return A; end function;

HurwitzMove:=function(seq,idx)
      return Insert(Remove(seq,idx),idx+1,seq[idx]^seq[idx+1]);
end function;

HurwitzOrbit:=function(seq)
      orb:={ }; Purgatory:={ seq };
      repeat
        ExtractRep(~Purgatory,~gens); Include(~orb, gens);
          for k in [1..#seq-1] do hurgens:=HurwitzMove(gens,k);
            if hurgens notin orb then Include(~Purgatory, hurgens);
          end if; end for;
      until IsEmpty(Purgatory);
      orbcut:={  };
      for gens in orb do test:=true;
        for k in [1..#seq-1] do
          if Order(gens[k]) gt Order(gens[k+1]) then test:=false; break k;
          end if;
        end for;
        if test then Include(~orbcut, gens);
        end if;
      end for; return orbcut; end function;

SphGens:=function(G,sig)
      Gens:={ };
      for x1 in ElsOfOrd(G,sig[1]) do
      for x2 in ElsOfOrd(G,sig[2]) do
      for x3 in ElsOfOrd(G,sig[3]) do
      for x4 in ElsOfOrd(G,sig[4]) do
      if Order(x1*x2*x3*x4) eq sig[5] then
      if sub<G|x1,x2,x3,x4> eq G then
        Include(~Gens, [x1,x2,x3,x4,(x1*x2*x3*x4)^-1]);
      end if; end if; end for; end for; end for; end for;
return Gens; end function;

FindAllComponents:=function(G,sig1,sig2,Ksquare)
      Comps:={@ @}; Heaven:={ }; Hell:={ }; Aut:=AutGr(G);
      NumberOfCands:=#SphGens(G,sig1)*#SphGens(G,sig2);
      for gen1 in SphGens(G,sig1) do
      for gen2 in SphGens(G,sig2) do
        if gen1 cat gen2 notin Hell then
          Include(~Heaven, [gen1,gen2]);
          orb1:=HurwitzOrbit(gen1); orb2:=HurwitzOrbit(gen2);
          for g1 in orb1 do for g2 in orb2 do for phi in Aut do
             Include(~Hell, phi(g1 cat g2));
             if #Hell eq NumberOfCands then break gen1;
             end if;
          end for; end for; end for;
        end if;
      end for; end for;
      for gens in Heaven do
      if CheckSingsEl(G,gens[1],gens[2],Ksquare) then
        Include(~Comps, gens);
      end if; end for; return Comps; end function;

/* Finally we have to compute the fundamental group of the resulting
surfaces, which is done by the script Pi1. It uses the script
Polygroup which, given a sequence of 5 spherical generators of a group
5, produces the corresponding Polygonal Group P and the surjective
morphism P->G.

Pi1 uses the two sequences seq1, seq2 of 5 spherical generators of G
to construct the surjection f from the product of the respective
polygonal groups T1 x T2 to G x G, defines the subgroup H (=preimage
of the diagonal in GxG), and takes the quotient by by a sequence of
generators of Tors(H), which are obtained as follows: consider each pair
of elements (g1,g2), where gi is an element of seqi, such that g1^a is
nontrivial and conjugate to g2^b. Note that they give rise to a node, so
they both have order 2). Let h be a fixed element of G such that
g1^a*h=h*g2^b.
We let c vary between the elements in G commuting with g1^a.
Let t1,t2 be the natural preimages of g1,g2 in the
respective polygonal groups T1 and T2, t a preimage of h^-1*c in T2.
Then t1^a*t^-1*t2^b*t has order 2 and belongs to H.
These elements generate Tors(H) (see prop. 4.3). */


PolyGroup:=function(seq)
   F:=FreeGroup(#seq);
   P:=quo<F | F.1^Order(seq[1]), F.2^Order(seq[2]), F.3^Order(seq[3]),
             F.4^Order(seq[4]), F.5^Order(seq[5]), F.1*F.2*F.3*F.4*F.5>;
   return P, hom<P->Parent(seq[1])|seq>;
end function;

Pi1:=function(seq1,seq2)
      T1:=PolyGroup(seq1); T2,f2:=PolyGroup(seq2); G:=Parent(seq1[1]);
      T1xT2:=DirectProduct(T1,T2);
      inT2:=hom< T2->T1xT2 | [T1xT2.6, T1xT2.7, T1xT2.8, T1xT2.9, T1xT2.10]>;
      GxG,inG:=DirectProduct(G,G); m:=NumberOfGenerators(G); L:=[ ];
      for i in [1..m] do Append(~L,GxG.i*GxG.(i+m)); end for;
      Diag:=hom<G->GxG|L>(G);
      f:=hom<T1xT2->GxG|
                      inG[1](seq1[1]),inG[1](seq1[2]),inG[1](seq1[3]),
                                        inG[1](seq1[4]),inG[1](seq1[5]),
                      inG[2](seq2[1]),inG[2](seq2[2]),inG[2](seq2[3]),
                                        inG[2](seq2[4]),inG[2](seq2[5])>;
      H:=Rewrite(T1xT2,Diag@@f); TorsH:=[ ];
      for i in [1..5] do if IsEven(Order(seq1[i])) then
      for j in [1..5] do if IsEven(Order(seq2[j])) then
      a:=IntegerRing()!(Order(seq1[i])/2); b:=IntegerRing()!(Order(seq2[j])/2);
      test,h:= IsConjugate(G,seq1[i]^a,seq2[j]^b);
         if test then for c in Centralizer(G,seq1[i]^a) do
           Append(~TorsH, T1xT2.i^a * ((T1xT2.(j+5)^b)^(inT2((h^-1*c) @@ f2))));
         end for; end if;
        end if; end for; end if; end for;
      return Simplify(quo<H | TorsH>);
end function;

/* This following script does the same computation as the previous one,
but instead of returning the fundamental group as astract group it
returns T1xT2, H as subgroup of T1xT2 and a list of generators of
Tors(H) */

Pi1Detailed:=function(seq1,seq2)
      T1:=PolyGroup(seq1); T2,f2:=PolyGroup(seq2); G:=Parent(seq1[1]);
      T1xT2:=DirectProduct(T1,T2);
      inT2:=hom< T2->T1xT2 | [T1xT2.6, T1xT2.7, T1xT2.8, T1xT2.9, T1xT2.10]>;
      GxG,inG:=DirectProduct(G,G); m:=NumberOfGenerators(G); L:=[ ];
      for i in [1..m] do Append(~L,GxG.i*GxG.(i+m)); end for;
      Diag:=hom<G->GxG|L>(G);
      f:=hom<T1xT2->GxG|
                      inG[1](seq1[1]),inG[1](seq1[2]),inG[1](seq1[3]),
                                        inG[1](seq1[4]),inG[1](seq1[5]),
                      inG[2](seq2[1]),inG[2](seq2[2]),inG[2](seq2[3]),
                                        inG[2](seq2[4]),inG[2](seq2[5])>;
      H:=Rewrite(T1xT2,Diag@@f); TorsH:=[ ];
      for i in [1..5] do if IsEven(Order(seq1[i])) then
      for j in [1..5] do if IsEven(Order(seq2[j])) then
      a:=IntegerRing()!(Order(seq1[i])/2); b:=IntegerRing()!(Order(seq2[j])/2);
      test,h:= IsConjugate(G,seq1[i]^a,seq2[j]^b);
         if test then for c in Centralizer(G,seq1[i]^a) do
           Append(~TorsH, T1xT2.i^a * ((T1xT2.(j+5)^b)^(inT2((h^-1*c) @@ f2))));
         end for; end if;
        end if; end for; end if; end for;
      return T1xT2,H, TorsH;
end function;


\end{verbatim}
}
\end{document}